\newcommand{\Exp}{\mathbb{E}}
\newcommand{\eps}{\varepsilon}
\renewcommand{\P}{\mathbb{P}}
\newcommand{\Q}{\mathbb{Q}}
\newcommand{\inner}[1]{\langle #1 \rangle}
\tikzstyle{dot}=[circle,fill,black,inner sep=1pt]
\tikzset{
  on each segment/.style={
    decorate,
    decoration={
      show path construction,
      moveto code={},
      lineto code={
        \path [#1]
        (\tikzinputsegmentfirst) -- (\tikzinputsegmentlast);
      },
      curveto code={
        \path [#1] (\tikzinputsegmentfirst)
        .. controls
        (\tikzinputsegmentsupporta) and (\tikzinputsegmentsupportb)
        ..
        (\tikzinputsegmentlast);
      },
      closepath code={
        \path [#1]
        (\tikzinputsegmentfirst) -- (\tikzinputsegmentlast);
      },
    },
  },
  mid arrow/.style={postaction={decorate,decoration={
        markings,
        mark=at position .5 with {\arrow[#1]{stealth}}
      }}},
  early arrow/.style={postaction={decorate,decoration={
        markings,
        mark=at position .2 with {\arrow[#1]{stealth}}
      }}},
}
\def\alternatecolorred{%
    \pgfkeysalso{red}%
    \global\let\alternatecolor\alternatecolorblue 
}
\def\alternatecolorblue{%
    \pgfkeysalso{blue}%
    \global\let\alternatecolor\alternatecolorred 
}
\newcommand{\altred}{\let\alternatecolor\alternatecolorred 
\tikzset{every edge/.append code = {%
    \global\let\currenttarget\tikztotarget 
    \pgfkeysalso{append after command={(\currenttarget)}}
			\alternatecolor
}}
}
\newcommand{\altblue}{\let\alternatecolor\alternatecolorblue 
\tikzset{every edge/.append code = {%
    \global\let\currenttarget\tikztotarget 
    \pgfkeysalso{append after command={(\currenttarget)}}
			\alternatecolor
}}
}
\tikzstyle{vertexdot}=[circle, draw, fill=black, minimum size=3,inner sep=0pt]
\newtheorem{theorem}{Theorem}
\newtheorem{lemma}{Lemma}
\newtheorem{proposition}{Proposition}
\newtheorem{definition}{Definition}
\newtheorem{remark}{Remark} 
\newtheorem{claim}{Claim}
\newcommand{\iiddistr}{{\stackrel{\text{\iid}}{\sim}}}
\newcommand{\reals}{{\mathbb{R}}}
\newcommand{\identity}{\mathbf I}
\newcommand{\Expect}{\mathbb{E}}
\newcommand{\expect}[1]{\mathbb{E}\left[ #1 \right]}
\newcommand{\expects}[2]{\mathbb{E}_{#2}\left[ #1 \right]}
\newcommand{\prob}[1]{ \mathbb{P}\left\{ #1 \right\} }
\newcommand{\var}{\mathsf{var}}
\def\independenT#1#2{\mathrel{\rlap{$#1#2$}\mkern2mu{#1#2}}}
\newcommand{\Hyp}{{\rm Hyp}}
\newcommand{\eg}{e.g.\xspace}
\newcommand{\ie}{i.e.\xspace}
\newcommand{\iid}{i.i.d.\xspace}
\newcommand{\norm}[1]{\left\|{#1} \right\|_2}
\newcommand{\iprod}[2]{\left \langle #1, #2 \right\rangle}
\newcommand{\indc}[1]{{\mathbf{1}_{\left\{{#1}\right\}}}}
\newcommand{\diag}[1]{\mathsf{diag} \left\{ {#1} \right\} }
\newcommand{\calE}{{\mathcal{E}}}
\newcommand{\calN}{{\mathcal{N}}}
\newcommand{\calT}{{\mathcal{T}}}
\DeclareMathAlphabet{\varmathbb}{U}{bbold}{m}{n}
\renewcommand{\d}{{\rm d}}
\renewcommand{\hat}{\widehat}
\renewcommand{\tilde}{\widetilde}
\newcommand{\MSE}{\text{MSE}}
\newcommand{\TV}{\mathrm{TV}}
\DeclarePairedDelimiter\ceil{\lceil}{\rceil}
\DeclarePairedDelimiter\floor{\lfloor}{\rfloor}
\newcommand\independent{\protect\mathpalette{\protect\independenT}{\perp}}
\def\independenT#1#2{\mathrel{\rlap{$#1#2$}\mkern2mu{#1#2}}}
\theoremstyle{plain}
\newtheorem*{theorem*}{Theorem}
\newtheorem*{proposition*}{Proposition}
\author{
{ Galen Reeves}\thanks{Duke University; e-mail: {\tt galen.reeves@duke.edu}. }
\and
{ Jiaming Xu}\thanks{The Fuqua School of Business, Duke University; e-mail: {\tt jiamingxu.868@duke.edu}. }
\and
{ Ilias Zadik}\thanks{Operations Research Center, Massachusetts Institute of Technology ; e-mail: {\tt izadik@mit.edu}}
}
\begin{document}

\title{The All-or-Nothing Phenomenon in Sparse Linear Regression}

\maketitle

\begin{abstract}
We study the problem of recovering a hidden binary $k$-sparse $p$-dimensional vector $\beta$ from $n$ noisy linear observations $Y=X\beta+W$ where $X_{ij}$ are i.i.d.\  $\mathcal{N}(0,1)$ and $W_i$ are i.i.d.\  $\mathcal{N}(0,\sigma^2)$. A closely related  hypothesis testing problem is to distinguish the pair $(X,Y)$ generated from this structured model from a corresponding null model where $(X,Y)$ consist of purely independent Gaussian entries. In the low sparsity $k=o(p)$ and high signal to noise ratio $k/\sigma^2=\Omega\left(1\right)$ regime, we establish an ``All-or-Nothing'' information-theoretic phase transition at a critical sample size $n^*=2 k\log \left(p/k\right) /\log \left(1+k/\sigma^2\right)$, resolving a conjecture of \cite{gamarnikzadik}. Specifically, we show that if $\liminf_{p\to \infty} n/n^*>1$, then the maximum likelihood estimator almost perfectly recovers the hidden vector with high probability
     and moreover the true hypothesis can be detected with a vanishing error probability. Conversely,
     if $\limsup_{p\to \infty} n/n^*<1$, then it becomes information-theoretically impossible even to  recover an arbitrarily small but fixed fraction of the hidden vector support, or to test hypotheses strictly better than random guess.
     
     Our proof of the impossibility result builds upon two key techniques, which could be of independent interest. First, we use a conditional second moment method to upper bound the Kullback-Leibler (KL) divergence between the structured and the null model. Second, inspired by the celebrated area theorem, we establish a lower bound to the minimum mean squared estimation error of the hidden vector in terms of the KL divergence between the two models.
\end{abstract}

\tableofcontents

\section{Introduction}

In this paper, we study the information-theoretic limits of  the Gaussian sparse linear regression problem.  Specifically, for $n,p,k \in \mathbb{N}$ with $k \leq p$ and $\sigma^2>0$ we consider two independent matrices 
$X \in \reals^{n \times p}$ and $W \in \reals^{n \times 1}$
with $X_{ij} \iiddistr \calN(0,1)$
and $W_i \iiddistr \calN(0, \sigma^2)$, 
and observe
\begin{align}
Y = X \beta + W, \label{eq:planted}
\end{align}
where  $\beta$ is assumed to be uniformly chosen at random from
the set $\{ v \in \{0,1\}^p: \| v \|_0 = k\}$ and independent of $(X, W)$. 
The problem of interest is to recover $\beta$ given the knowledge of $X$ and $Y$. Our focus will be on identifying the minimal sample size $n$ for which the recovery is information-theoretic possible.

The problem of recovering the support of a hidden sparse vector $\beta \in \mathbb{R}^p$ given noisy linear observations has been extensively analyzed in the literature, as it naturally arises in many contexts including subset regression, e.g.~\cite{miller:1990}, signal denoising, e.g.~\cite{SignDen}, compressive sensing, e.g.~\cite{candes2005decoding}, \cite{donoho2006compressed}, information and coding theory, e.g.~\cite{joseph:2012}, as well as high dimensional statistics, e.g. \cite{wainwright:2009,wainwright2009sharp}. 
 The assumptions of Gaussianity of the entries of $(X,W)$ are standard in the literature. 
Furthermore, much of the literature (e.g.  \cite{aeron:2010},  \cite{ndaoud2018optimal}, \cite{wang2010information})  assumes a lower bound $\beta_{\min}>0$ for the smallest magnitude of a nonzero entry of $\beta$, that is $\min_{i: \beta_i \not = 0} |\beta_i| \geq \beta_{\min}$, as otherwise identification of the support of the hidden vector is in principle impossible. In this paper we adopt a simplifying assumption by focusing only on binary vectors $\beta$, similar to other papers in the literature such as \cite{aeron:2010}, \cite{gamarnikzadik} and \cite{gamarnikzadik2}. In this case recovering the support of the vectors is equivalent to identifying the vector itself. 

To judge the recovery performance we focus on the mean squared error (MSE). That is, given an estimator $\hat{\beta}$ as a function of $(X,Y)$, define mean squared error as 
$$
\MSE \left(\hat{\beta} \right)  \triangleq \expect{\| \hat{\beta} - \beta \|^2 },
$$
where $\|v \|$ denotes the $\ell_2$ norm of a vector $v$. In our setting, 
one can simply choose $\hat{\beta}=\expect{ \beta}$, which equals $\frac{k}{p} (1,1,\ldots,1)^\top$, and obtain a trivial $\MSE_0 =\expect{\|  \beta - \expect{ \beta}\|^2}$, which equals $k\left(1-\frac{k}{p}\right)$. We will adopt the following two natural notions of recovery, by comparing the $\MSE$ of an estimator $\hat{\beta}$ to $\MSE_0$. 

\begin{definition}[Strong and weak recovery]
We say that $\hat{\beta}=\hat{\beta}(Y,X) \in \mathbb{R}^p$ achieves
 \begin{itemize}

\item strong recovery if $\limsup_{p \to \infty} \MSE \left(\hat{\beta} \right)/\MSE_0=0$;
\item weak recovery if  $\limsup_{p \to \infty} \MSE \left(\hat{\beta} \right)/\MSE_0 <1.$

\end{itemize}
\end{definition}The fundamental question of interest in this paper is when $n$ as a function of $(p, k, \sigma^2)$ is such that
strong/weak recovery is information-theoretically possible.

The focus of this paper will be on sublinear sparsity levels, that is  on $k=o\left(p \right)$. A great amount of literature has been devoted on the study of the problem in the linear regime where $n,k,\sigma=\Theta(p).$
One line of work has provided upper and lower bounds on the accuracy of support recovery as a function of the problem parameters, \eg \cite{aeron:2010,reeves:2012,reeves:2013b,scarlett:2017}. Another line of work has derived explicit formulas for the minimum MSE (MMSE) $\expect{ \| \beta - \expect{ \beta \mid X, Y} \|^2}$. These formulas were first obtained heuristically using the replica method from statistical physics \cite{tanaka:2002, guo:2005} and later proven rigorously in \cite{reeves:2016a,barbier:2016}. However, to our best of knowledge, none of the rigorous techniques of  \cite{reeves:2016a,barbier:2016} apply when $k=o(p)$. 
Although there has been significant work focusing directly  on the sublinear sparsity regime, the identification of the exact information theoretic threshold of this fundamental statistical problem remains largely open (see \prettyref{sec:comparison} for a detailed discussion). 
 
Obtaining a tight characterization of the information-theoretic threshold is the main contribution of this work.

Towards identifying the information theoretic limits of recovering $\beta$, and out of independent interest, we also consider a closely related hypothesis testing problem, where the goal is to distinguish the pair $(X,Y)$ generated according to \eqref{eq:planted} from a model where both $X$ and $Y$ are independently generated. More specifically, given two independent matrices 
$X \in \reals^{n \times p}$ and $W \in \reals^{n \times 1}$
with $X_{ij} \iiddistr \calN(0,1)$
and $W_i \iiddistr \calN(0, \sigma^2)$,  we define
\begin{align}
Y \triangleq \lambda W, \label{eq:null_model}
\end{align}
where $\lambda > 0$ is a scaling parameter. We refer to the Gaussian linear regression model \prettyref{eq:planted} as the planted model, denoted by $P=P(X, Y)$, and \prettyref{eq:null_model} as the null model denoted by  $Q_{\lambda}=Q_{\lambda}(Y,X)$. We focus on characterizing the total variation distance $\mathrm{TV} \left(P,Q_{\lambda}\right)$ for various values of $\lambda$. One choice of particular interest is $\lambda=\sqrt{k/\sigma^2+1}$, under which $\expect{YY^\top}=(k+\sigma^2) \identity$ in both the planted and null models.

Analogous to recovery, we adopt the following two natural notions of testing~\cite{PerryWeinBandeira16,alaoui2017finite}. 

\begin{definition}[Strong and weak detection] Fix two probability measures $\mathbb{P},\mathbb{Q}$ on our observed data $(Y,X)$.
We say a test statistic $\calT(X, Y)$ with a threshold $\tau$ achieves 
\begin{itemize}
\item strong detection if
\[
\limsup_{p \to \infty} \left[ \P( \calT(X, Y) < \tau) + \Q(\calT(X, Y) \ge \tau) \right] = 0,
\]

\item weak detection, if 
$$\limsup_{p \to \infty} [\P( \calT(X, Y) < \tau ) + \Q( \calT(X, Y) \ge \tau )]<1.$$

\end{itemize}
\end{definition}
Note that strong detection asks for the test statistic to determine 
with high probability whether $(X,Y)$ is drawn from $\P$ or $\Q$, while
weak detection, similar to weak recovery, only asks for the test statistic to strictly outperform the
random guess. Recall that 
$$
\inf_{\calT,\tau} \left[ \P( \calT(X, Y) < \tau) + \Q(\calT(X, Y) \ge \tau) \right] = 1- \mathrm{TV}(P,Q).
$$
Thus equivalently, 
strong detection is possible if and only if 
$\liminf_{p\to \infty} \mathrm{TV}(\P,\Q) = 1$, 
and weak detection is possible if and only if 
$\liminf_{p \to \infty} \mathrm{TV}(\P,\Q)>0.$ The fundamental question of interest is when $n$ as a function of $(p, k, \sigma^2)$ 
is such that 
strong/weak detection is information-theoretically possible.

\subsection{Contributions}

Of fundamental importance is the following sample size:
\begin{align}
n^* \triangleq \frac{2k \log (p/k)}{\log (1+ k/\sigma^2)}. \label{eq:def_n_star}
\end{align}

We establish that $n^*$ is a sharp phase transition point for the recovery of $\beta$ when $k=o(\sqrt{p})$ and the signal to noise ratio
$k/\sigma^2$ is above a sufficiently large constant. In particular, for an arbitrarily small but fixed constant $\epsilon>0$, when $n < (1-\epsilon) n^*$, \textit{weak recovery} is impossible, but when $n>(1+\epsilon) n^*$, \textit{strong recovery} is possible. This implies that the 
rescaled MMSE undergoes a jump  from $1$ to $0$ at $n^*$ samples up to a small window
of size $\epsilon n$. 

We state this in the following Theorem,  which  summarizes the Theorems \ref{thm:main}, \ref{thm:recovery}, \ref{thm:positive} and \ref{thm:posDetection} from the main body of the paper. 

\begin{theorem*}[All-or-Nothing Phase Transition]
Let  $\delta \in (0,\frac{1}{2})$ and $\epsilon \in (0,1)$ be two arbitrary but fixed constants. 
Then there exists a constant $C(\delta,\epsilon)>0$ only depending only $\delta$ and $\epsilon$,
such that  if $k/\sigma^2 \ge C(\delta,\epsilon)$, then \begin{itemize}
\item When $k \le p^{\frac{1}{2}-\delta}$  and $$n <  \left(1-\epsilon \right) n^*, $$ both weak recovery of $\beta$ from $(Y,X) \sim P$ and weak detection between $P$ and $Q_{\lambda_0}$ are information-theoretically impossible, where $\lambda_0=\sqrt{\frac{k}{\sigma^2}+1}$.
\item  When  $k=o(p)$ and $$n>  \left(1+\epsilon \right) n^*, $$ both strong recovery of $\beta$ from $(Y,X) \sim P$ and $(\dagger)$ strong detection between $P$ and $Q_{\lambda}$  are information-theoretically possible for any $\lambda>0$. 

$(\dagger)$: strong detection requires an additional assumption $1+ k/\sigma^2 \leq  
\left( k \log \left(p/k\right) \right)^{1-\eta}$ for some arbitrarily small but fixed constant $\eta>0.$
\end{itemize}

\end{theorem*}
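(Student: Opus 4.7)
The theorem has two essentially separate halves, converse (impossibility below $n^*$) and achievability (possibility above $n^*$), attacked by very different methods. For the converse, I would reduce both sub-claims to one analytic estimate: an upper bound of the form $\mathrm{KL}(P\,\|\,Q_{\lambda_0})=o(1)$ whenever $n\le(1-\epsilon)n^*$, where $\lambda_0=\sqrt{1+k/\sigma^2}$ is chosen so that the first two $Y$-moments match under $P$ and $Q_{\lambda_0}$. Weak detection against $Q_{\lambda_0}$ is then immediately ruled out by Pinsker's inequality. For weak recovery I would establish an ``area-theorem''-type inequality: embed the regression in a one-parameter family by augmenting the data with side information $\sqrt{t}\beta+Z$, $Z\sim\mathcal N(0,I_p)$, invoke the I-MMSE identity $\frac{d}{dt}I(\beta;\text{all data at level }t)=\frac12\,\mathrm{mmse}(t)$, and integrate from $0$ to a well-chosen $T$; by a change of measure the resulting mutual information is controlled by $\mathrm{KL}(P\,\|\,Q_{\lambda_0})$, so a small KL forces $\mathrm{mmse}(0)\approx\MSE_0$, ruling out weak recovery.

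The core technical step is the KL estimate, which I would prove via a conditional second moment method. Expanding $\mathbb{E}_{Q_{\lambda_0}}[(dP/dQ_{\lambda_0})^2]$ gives a double integral over independent copies $\beta,\beta'$ of the prior that reduces to a sum over the overlap $\ell=|\mathrm{supp}(\beta)\cap\mathrm{supp}(\beta')|$. The calibration $\lambda_0=\sqrt{1+k/\sigma^2}$ matches moments so that the $\ell=0$ term equals $1$, but rare high-overlap pairs blow up the naive second moment. I would therefore condition on a ``typical'' event $\mathcal E$ designed so that $P(\mathcal E^c)$ is negligible while the indicator of $\mathcal E$ truncates exactly the heavy-overlap contributions (most naturally by constraining the norm of $X\beta$ and a few associated inner products to lie near their expectations). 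Careful bookkeeping, calibrated so the geometric factor obtained from summing over $\ell$ is exactly offset by the combinatorial count $\binom k\ell\binom{p-k}{k-\ell}\le\exp(2(k-\ell)\log(p/k))$, then drives the truncated second moment — and hence the KL — to zero precisely at the threshold $n^*$.

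For achievability I would analyze the maximum likelihood estimator $\hat{\beta}_{\mathrm{ML}}=\arg\min_{\beta'}\|Y-X\beta'\|^2$ by a direct union bound. For each overlap $\ell\in\{0,\ldots,k-1\}$, the probability that a specific $\beta'$ with $|\mathrm{supp}(\beta')\cap\mathrm{supp}(\beta)|=\ell$ beats $\beta$ on the likelihood is controlled by a Chernoff estimate on a Gaussian quadratic form, roughly $(1+k/\sigma^2)^{-(k-\ell)n/(2k)}$; combined with the count $\binom k\ell\binom{p-k}{k-\ell}$, the total error vanishes whenever $n>(1+\epsilon)n^*$. Strong detection then follows by thresholding: for $\lambda\ne\lambda_0$ a simple test on $\|Y\|^2$ separates $P$ from $Q_\lambda$ by first moments, while for the critical $\lambda=\lambda_0$ one can instead threshold the residual $\|Y-X\hat{\beta}_{\mathrm{ML}}\|^2$, which concentrates near $n\sigma^2$ under $P$ but is forced to be substantially larger under $Q_{\lambda_0}$; the extra hypothesis $1+k/\sigma^2\le(k\log(p/k))^{1-\eta}$ supplies the quantitative slack needed for sharp concentration of this residual-based test.

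The main obstacle is the conditional second moment argument. The event $\mathcal E$ must simultaneously be $P$-typical (so that conditioning barely perturbs the KL), kill the $Q_{\lambda_0}$-second-moment divergence coming from heavy overlaps, and be sharp enough to resolve $(1-\epsilon)n^*$ versus $n^*$. The area-theorem MMSE-to-KL lemma, while conceptually novel, should be a relatively short consequence of I-MMSE once the correct auxiliary Gaussian channel is fixed, and the achievability half follows well-trodden Gaussian large-deviation templates once the overlap combinatorics are set up cleanly.
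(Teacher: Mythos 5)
Your overall architecture coincides with the paper's: a conditional second moment bound giving $\chi^2(P_{\calE}\|Q_{\lambda_0})=o(1)$ and hence $D(P\|Q_{\lambda_0})=o(1)$ (with a truncation event that tames high-overlap pairs, close in spirit to the paper's event $\calE_{\gamma,\tau}$ constraining $\|X(\beta+\beta')\|^2$), Pinsker for the detection converse, an MLE union bound over overlaps for strong recovery, and a residual-type statistic for strong detection. However, two steps are asserted in a way that does not close. First, your route from small KL to trivial MMSE is the crux of the recovery converse, and as sketched it does not work: augmenting with side information $\sqrt{t}\beta+Z$ and integrating the I-MMSE identity in $t$ yields $\tfrac12\int_0^T \mathrm{mmse}(t)\,dt = I(\beta;\sqrt{T}\beta+Z\mid X,Y)$, which (by monotonicity of $\mathrm{mmse}(t)$) gives \emph{upper} bounds on the MMSE at positive side-information levels, not a \emph{lower} bound at $t=0$; and the claim that ``a change of measure controls the mutual information by $\mathrm{KL}$'' has the logic inverted. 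In the paper, small $D(P\|Q_{\lambda_0})$ makes $I(\beta;Y\mid X)$ nearly \emph{maximal}, via the identity $I(\beta;Y\mid X)=\Exp_P\bigl[\log\tfrac{P(Y|X,\beta)}{Q_{\lambda_0}(Y)}\bigr]-D(P\|Q_{\lambda_0})\ge \tfrac n2\log(1+k/\sigma^2)-D(P\|Q_{\lambda_0})$; combining with the chain rule over the sample index, where each of the first $m$ increments is at most $\tfrac12\log(1+k/\sigma^2)$ and each of the last $n-m$ is at most $\tfrac12\log(1+\MSE(\hat\beta)/\sigma^2)$ for estimators based on the first $m$ rows, forces $\MSE\ge e^{-2D/(n-m)}(k+\sigma^2)-\sigma^2$ (Lemma \ref{lem:MSE_LB}). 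Note also the subtlety that the KL at sample size $n'=\lfloor(1-\alpha)n^*\rfloor$ controls the MMSE only at strictly smaller sample sizes $m<n'$, which your sketch glosses over, and that passing from $D(P_\calE\|Q_{\lambda_0})$ to $D(P\|Q_{\lambda_0})$ needs an explicit bound on $\eps\,D(P_{\calE^c}\|Q_{\lambda_0})$ (Lemma \ref{lmm:DEc_UB}), not just $\eps=o(1)$.

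Second, for strong detection you assert that under $Q_{\lambda_0}$ the residual $\min_{\beta'}\|Y-X\beta'\|^2$ is ``forced to be substantially larger'' than $n\sigma^2$, but this is precisely the nontrivial ingredient and you give no mechanism. The paper proves it via a distortion-rate (covering) lower bound $\Exp\bigl[\min_{\beta'}\|Y-X\beta'\|^2\bigr]\ge n\lambda^2\sigma^2 M^{-2/n}$ with $M=\binom pk$, sharpened by Gaussian Lipschitz concentration; the extra hypothesis $1+k/\sigma^2\le(k\log(p/k))^{1-\eta}$ enters exactly here, guaranteeing $nM^{-2/n}\to\infty$ so the fluctuation terms are negligible (condition \prettyref{eq:ass_detection_cond}). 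A union bound over the $\binom pk$ candidates on the lower tail of $\|Y-X\beta'\|^2\sim(2k+\sigma^2)\chi^2_n$ could serve as an alternative, but some such argument must be supplied; ``concentration'' alone does not control a minimum over exponentially many candidates. Two minor calibration points: your per-candidate MLE error exponent should be $\bigl(1+\tfrac{k-\ell}{2\sigma^2}\bigr)^{-n/2}$ (the paper's Lemma \ref{chilem}), not $(1+k/\sigma^2)^{-(k-\ell)n/(2k)}$ — the discrepancy (a factor $2$ inside the logarithm) is exactly why the paper's sample condition carries the factor $1+\tfrac{\log 2}{\log(1+k/(2\sigma^2))}$, absorbed only because $k/\sigma^2\ge C(\epsilon)$; and your norm test for $\lambda\neq\lambda_0$ needs uniformity in $\lambda$, whereas the paper's single statistic handles all $\lambda>0$ at once.
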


Note that the theorem above assumes $\sigma>0$. 
In the extreme case where $\sigma=0$, $n^*$ trivializes to zero and we can directly argue that one sample suffices for strong recovery. In fact, for any $\beta \in \{0,1\}^p$ and $Y_1=\inner{X_1,\beta}$ for $X_1 \sim \mathcal{N}(0,\identity_p)$, we can identify $\beta$ as the unique binary-valued solution of $Y_1=\inner{X_1,\beta}$, almost surely with respect to the randomness of $X$ (see e.g. \cite{gamarnikzadik3})

Note that the first part of the above result focuses on $k \le p^{1/2-\delta}$. It turns out that this is not a technical artifact and $k =o\left(p^{1/2}\right)$ is needed for $n^*$ to be the weak detection sample size threshold.
More details can be found in \prettyref{app:assumption_needed}. 
The sharp information-theoretic threshold for either detection or recovery is still open 
when $k=\Omega\left(p^{1/2}\right)$ and $k=o(p)$.

\paragraph{The phase transition role of $n^*$}

According to our main result, the rescaled minimum mean squared error of the problem, $\mathrm{MMSE}/\MSE_0$,  exhibits  a step behavior asymptotically. Loosely speaking, when $n<n^*$ it equals to one and when $n>n^*$ it equals to zero. We next intuitively explain why  such a step behavior for sparse high dimensional regression occurs at $n^*$, using ideas related to \textit{the area theorem}. The area theorem has been used in the channel coding literature
to study the MAP decoding threshold~\cite{measson2008maxwell} and the capacity-achieving codes~\cite{kudekar2017reed}. The approach described below is similar to the one used previously for linear regression \cite{reeves:2016a}.

First let us observe that $n^*$ is asymptotically equal to the \emph{ratio} of entropy  $H(\beta)=\log \binom{p}{k}$  and 
Gaussian channel capacity $\frac{1}{2} \log (1+k/\sigma^2)$. We explore this coincidence in the following way.
Let $I_n \triangleq I(Y_1^n; X, \beta)$ denote the mutual information between
 $\beta$ and  $(Y_1^n; X)$ with a total of $n$ linear measurements. 
Since the mutual information in the Gaussian channel under a second moment constraint is maximized by the Gaussian
input distribution, it follows that the increment of mutual information 
$I_{n+1}- I_n \le \frac{1}{2} \log (1 + \mathrm{MMSE}_n/\sigma^2)$, where $\mathrm{MMSE}_n$ denotes the minimum
MSE with $n$ measurements. In particular, all the increments are between zero and $\frac{1}{2} \log (1 +k/\sigma^2)$ and by telescopic summation for any $n$: 
\begin{align}\label{area} I_n \leq \frac{n}{2} \log (1 +k/\sigma^2), \end{align} 
with equality only if for all $m<n$,  $\mathrm{MMSE}_m=k$. This is illustrated in~\prettyref{fig:phase_diagram} where we plot $n$ against $I_{n+1}-I_n$.

Suppose now that we have established that strong recovery is achieved with $n^*=\frac{H(\beta)}{\frac{1}{2} \log (1 +k/\sigma^2)}$ samples.  

Then strong recovery and standard identities connecting mutual information and entropy implies that  $$I_{n^*}=H(\beta)=\frac{n^*}{2} \log (1 +k/\sigma^2).$$ In particular, (\ref{area}) holds with  equality, which means for all $n \le n^*-1$,
$\mathrm{MMSE}_n=k$. In particular, for all $n<n^*$, weak recovery is impossible. This area theorem 
is the key underpinning our converse proof of the weak recovery.

\begin{figure}[ht]
\centering
\includegraphics[width=0.5\columnwidth]{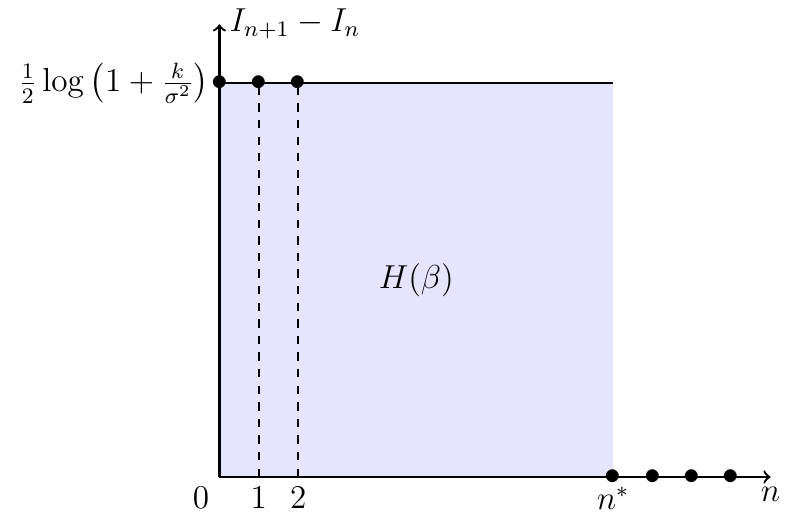}
\caption{The phase transition diagram in Gaussian sparse linear regression. 
The $y$-axis is the increment of mutual information with one additional measurement.
The area of blue region equals the entropy $H(\beta)\sim k \log (p/k)$.
}
\label{fig:phase_diagram}
\end{figure}

\subsection{Comparison with Related Work} \label{sec:comparison}

The information-theoretic limits of high-dimensional sparse linear regression 
have been studied extensively and there is a vast literature of multiple decades of research. In this section we focus solely on
the Gaussian and binary setting and furthermore on the results applying to high values of signal to noise ratio and sublinear sparsity.

\paragraph*{Information-theoretic Negative Results for weak/strong recovery  }

For the impossibility direction, previous work \cite[Theorem 5.2]{aeron:2010} has established that as $p \to \infty$, 
achieving $\mathrm{MSE}(\hat{\beta}) \leq d$ for any $d \in [0,k]$ is information-theoretically impossible if
$$
n \le 2 p \frac{h_2(k/p) - h_2(d/p )}{ \log \left( 1+ k/\sigma^2 \right)},
$$
where $h_2(\alpha)=-\alpha \log \alpha - (1-\alpha)\log (1-\alpha)$ for $\alpha \in[0,1]$
is the binary entropy function. This converse result is proved via a simple rate-distortion argument (see, e.g.~\cite{wu2018statistical} for an exposition). 
In particular, given any estimator $\hat{\beta}(X,Y)$ with $\MSE(\hat{\beta}) \le d$, we have
$$
p \left( h_2(k/p) - h_2(d/p ) \right) \le \inf_{\MSE(\tilde{\beta}) \le d } I(\tilde{\beta}; \beta) 
\le I(\hat{\beta}; \beta) \le I(X, Y; \beta)  \le \frac{n}{2} \log \left( 1+ k/\sigma^2 \right).
$$Notice that since $k=o(p)$ the result implies that if
$n \le \left(1-o\left(1\right) \right) n^*$, \emph{strong} recovery, that is $d=o(k)$, is information-theoretically impossible  and if $n=o(n^*)$, \emph{weak} recovery, that is $d\leq (1-\epsilon)k$ for an arbitrary $\epsilon \in (0,1)$,  is impossible.

More recent work \cite[Corollary 2]{scarlett:2017} further quantified the fraction of support that can be recovered when $n<(1-\epsilon)n^*$ for some fixed constant $\epsilon>0$. Specifically with $k=o(p)$ and any scaling of $k/\sigma^2$, if $n<(1-\epsilon)n^*$,   
then  the fraction of the support of $\beta$ that can be recovered correctly is at most $1-\epsilon$ with high probability; thus strong recovery is impossible.  

Restricting to the Maximum Likelihood Estimator (MLE) performance of the problem, it is shown in \cite{gamarnikzadik}  that under significantly small sparsity $k=O\left(\exp\left(\sqrt{\log p}\right)\right)$ and $k/\sigma^2 \rightarrow +\infty$, if $n \leq (1-\epsilon)n^*$,  the MLE not only fails to achieve strong recovery, but also fails to weakly recover the vector, that is recover correctly any positive constant fraction of the support.

 Our result (Theorem \ref{thm:recovery}) establishes that the MLE performance is fundamental. It improves upon the negative results  in the literature by identifying a sharp threshold for weak recovery, showing that if $k=o\left(\sqrt{p}\right)$, $k/\sigma^2 \geq C$ for some large constant $C>0$, and $n \le \left(1-\epsilon \right) n^*$, then \emph{weak} recovery is
information-theoretically impossible  by any estimator $\hat{\beta}(Y,X)$. 
In other words, no constant fraction of the support is recoverable under these assumptions.

\paragraph*{Information-theoretic Positive Results  for weak/strong recovery  }

In the positive direction, previous work \cite[Theorem 1.5]{akcakaya:2010} shows that when $k=o(p)$, $k/\sigma^2=\Theta(1)$, and $n>C_{k/\sigma^2} k \log (p-k)$ for  some $C_{k/\sigma^2}$, it is information theoretically possible to weakly recover the hidden vector. 

Albeit very similar to our results,  our positive result (Theorem \ref{thm:positive}) identifies the explicit value of $C_{k/\sigma^2}$ for which both weak and strong recovery are possible, that is $C_{k/\sigma^2}=2/\log \left(1+k/\sigma^2\right)$ for which $C_{k/\sigma^2} k \log (p/k)=n^*$.

In \cite{gamarnikzadik} it is shown that when $k=O\left(\exp \left( \sqrt{\log p}\right)\right)$ and $k/\sigma^2 \rightarrow + \infty$ then if $n \ge (1+\epsilon) n^*$ for some fixed $\epsilon>0$, \emph{strong} recovery is achieved by the MLE of the problem. We improve upon this result with \prettyref{thm:positive} by showing that when $n \ge (1+\epsilon) n^*$ for some fixed $\epsilon>0$ and any $k \leq cp$ for some $c>0$, then there exists a constant $C>0$ such that $k/\sigma^2 \geq C$ the MLE achieves  \emph{strong} recovery. In particular, we significantly relax the assumption from \cite{gamarnikzadik} by showing that MLE achieves \emph{strong} recovery with $(1+\epsilon)n^*$ samples for (1) any sparsity level less than $cp$ and (2) finite but large values of signal to noise ratio.

\paragraph*{Exact asymptotic characterization of MMSE for linear sparsity}

For both weak and strong recovery, the central object of interest is the MMSE $\expect{ \| \beta - \expect{ \beta \mid X, Y} \|^2}$ and its asymptotic behavior. While the asymptotic behavior of the MMSE remains a challenging open problem when $k=o(p)$, it has been accurately understood when $k=\Theta(p)$ and $k/\sigma^2 = \Theta(1)$.

To be more specific, consider the asymptotic regime where $k=\eps p$, $\sigma^2=k/\gamma$, and $n= \delta p$, for fixed positive constants $\eps, \gamma, \delta$ as $p \to +\infty$. 
The asymptotic minimum mean-square error (MMSE) can be characterized explicitly in terms of  $(\eps,\gamma, \delta)$. 

This characterization was first obtained heuristically using the replica method from statistical physics \cite{tanaka:2002, guo:2005} and later proven rigorously \cite{reeves:2016a,barbier:2016}. 
More specifically, for fixed $(\eps, \gamma)$,  let the asymptotic MMSE as a function of $\delta$ be defined by
\[
\mathcal M_{\eps,\gamma}(\delta)= \lim_{p \to \infty}  \frac{\expect{ \| \beta - \expect{ \beta \mid X, Y} \|^2}}{\expect{ \| \beta - \expect{ \beta } \|^2}} . 
 \]  
The results in \cite{reeves:2016a,barbier:2016} lead to an explicit formula for $\mathcal M_{\eps,\gamma}(\delta)$. Furthermore, they show that for  $\eps \in (0,1)$ and all  sufficiently large $\gamma \in (0, \infty)$,  $\mathcal M_{\eps,\gamma}(\delta)$ has a jump discontinuity as a function of $\delta$. 
The location of this discontinuity, denoted by $\delta^* = \delta^*(\eps, \gamma)$, occurs at  a value that is strictly greater than the threshold $n^*/p$. 

Furthermore,  at the the discontinuity,
the MMSE transitions  from a value that is strictly less than the MMSE without any observations  
to a value that is strictly positive, \ie, $\mathcal{M_{\eps,\gamma}}(0) > \lim_{ \delta \uparrow \delta^*} \mathcal M_{\eps,\gamma}(\delta)
>\lim_{ \delta \downarrow \delta^*} \mathcal M_{\eps,\gamma}(\delta) >0$.

To compare these formulas to the sub-linear  sparsity studied in this paper, one can consider the limiting behavior of 
$\mathcal M_{\eps,\gamma}(\delta)$ as $\eps$ decreases to zero. It can be verified that $\mathcal M_{\eps,\gamma}(\delta)$ converges indeed to a step zero-one function as $\eps \to 0$ and the jump discontinuity transfers indeed to the critical value $n^*/p$ which makes the behavior consistent with the results in this paper. 

However, an important difference is that the results in this paper are derived directly under the scaling regime $k = o(p)$ whereas the derivation described above requires one to first take the asymptotic limit $p \to \infty$ for fixed $(\epsilon, \gamma)$ and then take $\epsilon \to 0$. Since the limits cannot interchange in any obvious way, the results in this paper cannot be derived as a consequence of the rigorous results in \cite{reeves:2016a,barbier:2016}. Finally, it should be mentioned that taking the limit $\epsilon \to 0$ for the replica prediction suggests the step behavior for all values of signal-to-noise ratio  $\gamma$ (see Figure \ref{fig:replica}). In this paper, the step behavior is rigorously proven in the high signal-to-noise ratio regime. The proof of the step behavior when the signal-to-noise ratio is low remains an open problem. 

\begin{figure}
\centering
\begin{minipage}{.5\textwidth}
  \centering
  \includegraphics[width=0.8\linewidth]{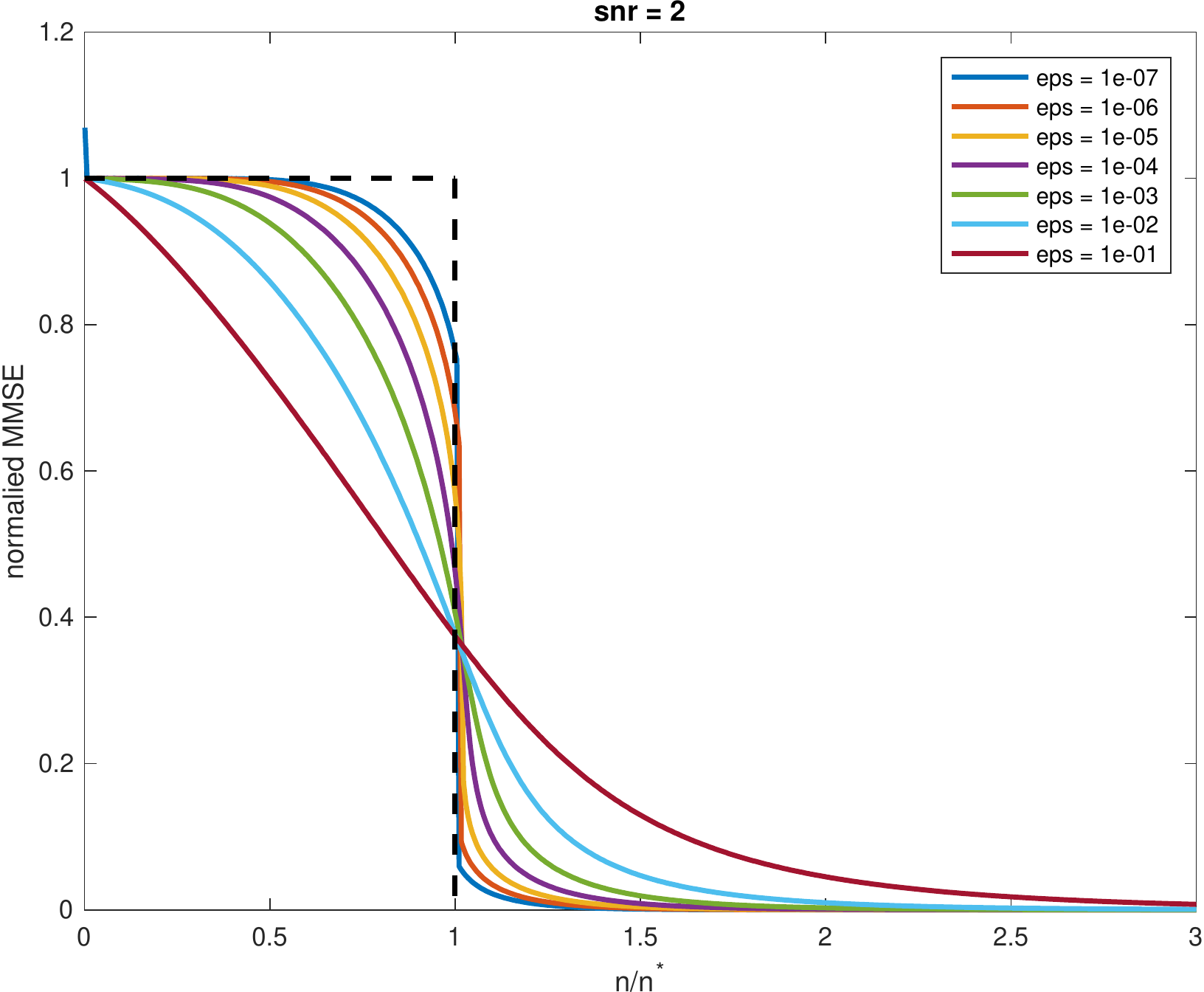}

  \label{fig:test1}
\end{minipage}%
\begin{minipage}{.5\textwidth}
  \centering
  \includegraphics[width=0.8\linewidth]{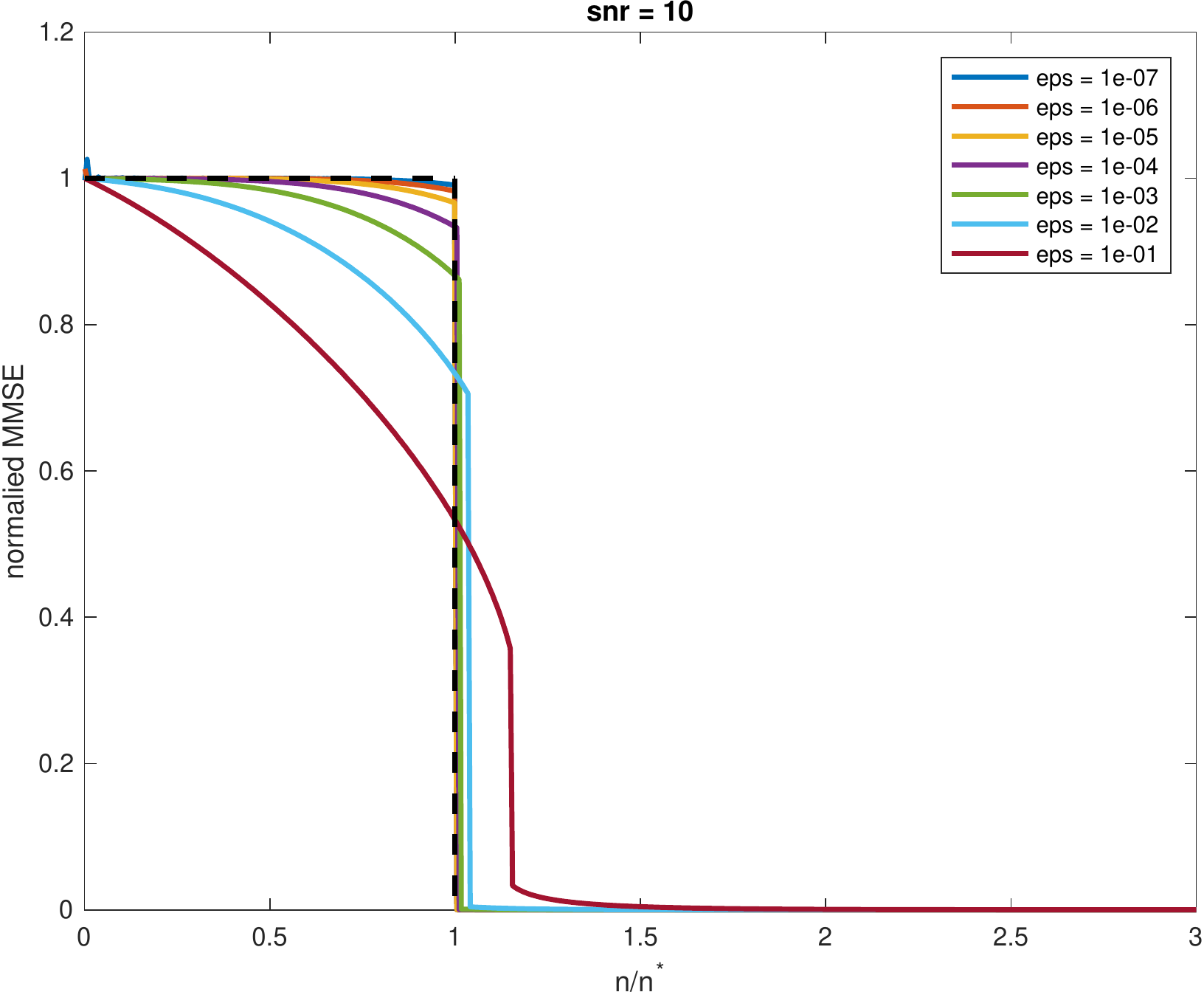}
  \label{fig:test2}
\end{minipage}\
 \caption{The limit of the replica-symmetric predicted MMSE $\mathcal M_{\eps,\gamma}(\cdot)$ as $\epsilon \to 0$ for signal to noise ratio (snr) $\gamma$ equal to $2$ (left curve) and equal to $10$ (right curve).}
\label{fig:replica}
\end{figure}

\paragraph*{Sparse Superposition Codes}

Constructing an algorithm for recovering a binary $k$-sparse $\beta$ from  $(Y=X\beta+W,X)$ receives a lot of attention from a coding theory point of view. The reason is that such recovery corresponds naturally to a code for the memoryless additive Gaussian white noise (AWGN) channel with signal-to-noise ratio equal to $k/\sigma^2$. Specifically in this context achieving strong recovery of a uniformly chosen binary $k$-sparse $\beta$ with $(1+\epsilon)n^*$ samples, for arbitrary $\epsilon>0$, corresponds exactly to capacity-achieving encoding-decoding mechanism of $\binom{p}{k}\sim (pe/k)^k$ messages through a AWGN channel. A recent line of work has analyzed a similar mechanism where $(p/k)^k$ messages are encoded through $k$-block-sparse vectors; that is the vector $\beta$ is designed to have at most one non-zero value in each of $k$ block of entries indexed by $i\floor{p/k},i\floor{p/k}+1,\cdots,(i+1)\floor{p/k} -1$ for $i=0,1,2,\ldots,k-1$. It has shown that by using various polynomial-time decoding mechanisms, such as adaptive successive decoding \cite{joseph:2012}, \cite{joseph:2014}, a soft-decision iterative decoder \cite{Barron:2012}, \cite{CHO:2014} and finally Approximate Message Passing techniques \cite{Rush:2017}, one can strongly recover the hidden $k$-block-sparse vector with $(1+\epsilon)n^*$ samples and achieve capacity. Their techniques are tailored to work for any $k=p^{1-c}$ with $c \in (0,1)$ and also require the vector to have carefully chosen non-zero entries, that is the hidden vector is not assumed to simply be binary. In this work \prettyref{thm:positive} establishes that under the simple assumption on $\beta$ being binary and arbitrarily (not block) $k$-sparse it suffices to make strong recovery possible with $(1+\epsilon)n^*$ samples when $k=o(p)$. Nevertheless, our decoding mechanism requires a search over the space of $k$-sparse binary vectors and therefore is not in principle polynomial-time. The design of a polynomial-time recovery algorithm for this task and $(1+\epsilon)n^*$ samples remains largely an open problem (see \cite{gamarnikzadik}).

\paragraph*{Information-theoretic limits up to constant factors for exact recovery} 
Although exact recovery is not our focus, we briefly mention some of the rich literature on the information-theoretic limits for the exact recovery of $\beta$, \ie, $\prob{\hat{\beta}=\beta} \to 1$ as $p \to \infty$ (see, \eg~\cite{wainwright:2009,fletcher:2009, Rad2011, wang2010information,ndaoud2018optimal}
and the references therein). Clearly since exact recovery implies weak and strong recovery, the sample sizes required to be achieve exact recovery are in principle no smaller than $n^*$.

Specifically, it has been shown in \cite[Theorem 1]{wainwright:2009} that
the maximum likelihood estimator achieves exact recovery if $ n \ge \Omega \left( \log \binom{p-k}{k} + \sigma^2 \log (p-k) \right)$
and $n-k \to +\infty$. Conversely, 
$n > \max \{ f_1(p,k), \ldots, f_k(p,k), k\}$ is shown in \cite[Theorem 1]{wang2010information} to be necessary for exact recovery, where 
$	 f_m(p,k)= 2 \frac{\log \binom{p-k+m}{m} -1 }{ \log \left( 1+  \frac{m(p-k)}{p-k+m}/\sigma^2 \right)}.
$ In the special regime where 
$k$ and $\sigma$ are fixed constants, it has been shown in \cite[Theorem 1]{jin2011limits} that exact recovery is information-theoretically possible if and only if $n \ge (1+o(1)) n^*$. Notice that this result achieves exact recovery for approximately $n^*$ sample size, but in this case of constant $k$ it can be easily seen that the two notions of exact and strong recovery coincide. 

Computationally, it has been shown in \cite[Section IV-B]{wainwright2009sharp} that LASSO achieves exact recovery in polynomial-time if $n \ge 2k \log (p-k)$.  More recently, it is shown in  \cite[Theorem 3.2, Corollary 3.2]{ndaoud2018optimal} that exact recovery can be achieved in polynomial-time, provided that $k=o(p)$, $\sigma \ge \sqrt{3}$, and $n \ge \Omega \left( k \log \frac{ep}{k} + \sigma^2 \log p  \right).$

\subsection{Proof Techniques}

In this section, we give an overview of our proof techniques.  
Given two probability distributions $P,Q$ with $P$ absolutely continuous to $Q$ and any convex function $f$ such that $f(1) = 0$, the $f$-divergence of $Q$ from $P$ is given by
$$ 
D_f(P\|Q) \triangleq \Exp_Q \left[ f \left(  \frac{d P}{ d Q} \right)  \right].
$$
Three choices of $f$ are of particular interests (See~\cite[Section 6]{PW-it} for details): 
\begin{itemize}
\item The \textit{Total Variation distance} $\mathrm{TV}(P,Q)$:  $f(x)=|x-1|/2$;
\item The \textit{Kullback-Leibler divergence} (a.k.a.\ relative entropy) $D( P \|Q)$ : $f(x) =x \log x$;
\item The \textit{$\chi^2$-divergence} $\chi^2(P\|Q)$: $f(x)=(x-1)^2$.
\end{itemize}
Note that the $\chi^2$-divergence $\chi^2(P\|Q)$ is equal to the variance 
of the Radon-Nikodym derivative (likelihood ratio) $dP/dQ$ under $Q$ and hence
$$
\chi^2(P\|Q) +1 = \Exp_Q \left[  \left ( \frac{dP}{dQ} \right) ^2  \right]
=\Exp_P \left[  \frac{dP}{dQ}   \right]. 
$$

A key to our proof is the following chain of inequalities:
\begin{align}
\mathrm{TV}(P,Q) \le  \sqrt{ 2 D( P ||Q )}  \le \sqrt{ 2 \log \left(  \chi^2 (P\|Q) +1 \right)},
\label{eq:TV_bound}
\end{align}
where the first inequality is simply Pinsker's inequality, and the second inequality holds by Jensen's inequality:
 \begin{align}
 D (P \| Q)  = \Exp_{P} \left[  \log \frac{ d P}{d Q} \right] 
 \le \log \left( \Exp_{ P} \left[ \frac{d P}{d Q}  \right] \right) 
 = \log \left(  \chi^2 (P\|Q) +1 \right). \label{eq:secondmoment_KL}
 \end{align}

Recall that to show the weak detection between $P$ and $Q_{\lambda}$ is impossible, it is equivalent to proving that 
$\mathrm{TV}\left(P,Q_{\lambda}\right) = o(1)$. In view of \prettyref{eq:TV_bound} there is a natural strategy towards proving it:
it suffices to prove that $\chi^2(P,Q_{\lambda})=o\left(1\right)$, which amounts to showing the 
second moment  $\Exp_Q\left[ (dP/dQ_{\lambda})^2\right]=1+o\left(1\right)$.
We prove that indeed if $n \le \left(1-o(1)\right) n^*/2$ and $\lambda$ is appropriately chosen,
then this second moment is indeed $1+o(1)$ (Theorem \ref{firstThm}); however, if $n> n^*/2$, then it blows up to infinity. 
This is because even if potentially $\mathrm{TV}(P,Q_{\lambda})=o(1)$, rare events can cause the second moment to explode and in particular (\ref{eq:TV_bound}) is far from being tight.

We are able to circumvent this difficulty by computing the second moment conditioned on an event $\calE$, 
which rules out the catastrophic rare ones. In particular, we introduce the following conditioned planted model.

\begin{definition}[Conditioned planted model]
Given a subset $\calE \subset \reals^{n \times p} \times \reals^{ p}$, 
define the conditioned planted model 

\begin{align}
P_\calE (X, Y ) = \frac{ \Exp_{\beta } \left[ P (X, Y \mid \beta) \indc{\calE}(X, \beta) \right]}{\prob{ \calE }} .  \label{eq:planted_cond}
\end{align}
\end{definition}

 Using this notation we can write
 \[
 P(X, Y) = (1- \eps) P_{\calE}(X, Y) + \eps P_{\calE^c}(X, Y), 
 \]
 where $\calE^c$ denotes the complement of $\calE$ and $\eps =  \prob{ (X,\beta) \in \calE^c}$. 
By Jensen's inequality and the convexity of KL-divergence,
\begin{align}
D(P ||Q_{\lambda}) \le (1- \eps)  D( P_{\calE} ||Q_{\lambda})  + \eps D( P_{\calE^c} ||Q_{\lambda}) . \label{eq:DPQ_convex_UB}
\end{align}

Under an appropriately chosen $\calE$, and $\lambda>0$, our main impossibility of detection result (Theorem \ref{thm:main}) shows that if $n \le (1+o(1)) n^*$, then
$\Exp_{Q_{\lambda}}[ (dP_\calE/dQ_{\lambda})^2]=1+o(1)$, or equivalently, $\chi^2(P_\calE \|Q_{\lambda}) =o(1)$, which immediately
implies that $D(P_\calE \|Q_{\lambda})=o(1)$ and $\mathrm{TV}(P_\calE, Q_{\lambda}) =o(1)$.
Finally, we argue that $\eps$ converges to $0$ sufficiently fast so that according to 
\prettyref{eq:DPQ_convex_UB}, $\TV(P,Q_{\lambda}) \le \TV(P_\calE ,Q) +o(1) =o(1)$ and 
$D(P \| Q_{\lambda}) \le D(P_\calE \|Q_{\lambda}) +o(1)=o(1).$

We remark that this (conditional) second moment method for providing detection lower bound has been used in many high-dimensional inference problems (see e.g.~\cite{Mossel12,Banks16,banks-etal-colt,PerryWeinBandeira16,wu2018statistical} and references therein). 

To further show weak recovery is impossible in the regime for sample size $n<n^*$ (Theorem \ref{thm:recovery}), we establish a lower bound of MSE
in terms of $D(P\|Q_{\lambda})$ (Lemma \ref{lem:MSE_LB})  which implies that the minimum MSE needs to be $\left(1-o(1)\right)k$ if $D(P\|Q_{\lambda})=o(n)$. The key underpinning our lower bound proof is the area theorem~\cite{measson2008maxwell,kudekar2017reed}.

\subsection{Notation and Organization}

Denote the identity matrix by $\identity$.
We let $\|X\|$ denote the spectral norm of a matrix $X$
and $\|x\|$ denote the $\ell_2$ norm of a vector $x$.
For any positive integer $n$, let $[n]=\{1, \ldots, n\}$.
For any set $T \subset [n]$, let $|T|$ denote its cardinality and $T^c$ denote its complement.
We use standard big $O$ notations,
e.g., for any sequences $\{a_p\}$ and $\{b_p\}$, $a_p=\Theta(b_p)$
if there is an absolute constant $c>0$ such that $1/c\le a_p/ b_p \le c$; $a_p =\Omega(b_p)$ or $b_p = O(a_p)$ if there exists  an absolute constant $c>0$ such that $a_p/b_p \ge c$. 
We say a sequence of events $\calE_p$ indexed by a positive integer $p$ 
holds with high probability, if the probability of $\calE_p$ converges to $1$ as $p \to +\infty$.
Without further specification, all the asymptotics are taken with respect to $p \to \infty$. 
All logarithms are natural and we use the convention $0 \log 0=0$.
For two real numbers $a$ and $b$, we use $a \vee b = \max\{a, b\}$ 
to denote the larger of $a$ and $b$. 
For two vectors $u, v$ of the same dimension, we use 
$\iprod{u}{v}$ denote their inner product. 
We use $\chi^2_n$ denote the standard chi-squared distribution 
with $n$ degrees of freedom. For $n,m,k \in \mathbb{N}$ with $m \leq k \leq n$ and  $m+k \leq n$ we denote by $\mathrm{Hyp}\left(n,m,k\right)$ the Hypergeometric distribution with parameters $n,m,k$ and probability mass function $p(s)=\binom{m}{s}\binom{n-m}{k-s}/ \binom{n}{k}, s \in [0,m] \cap \mathbb{Z}$.

The remainder of the paper is organized as follows.   
\prettyref{sec:main} presents the main results without proofs. 
\prettyref{sec:negative_dection}  and 
\prettyref{sec:negative_recovery} prove the negative results for detection and recovery,  respectively.
\prettyref{sec:positive} proves the positive results for detection and recovery. 
We conclude the paper in \prettyref{sec:open}, mentioning a few open problems.
Auxiliary lemmata and miscellaneous details are left to appendices.

\section{Main Results}\label{sec:main}

In this section we present our main results. The proofs are deferred to the following sections. 

\subsection{Impossibility of Weak Detection with $n<n^*$}
Our first impossibility detection result is based on a direct calculation of the second moment  between the planted model $P$ and the null model $Q_{\lambda}$. Specifically, we are able to show that weak detection between the two models is impossible, if $n \le (1-\alpha) n^*/2$ for some $\alpha=o_p(1)$ and $\lambda=\sqrt{k/\sigma^2+1}$.

\begin{theorem}\label{firstThm}

Suppose $k \le p^{1/2-\delta}$ for a fixed constant $\delta>0$ and 
$k/\sigma^2 \ge C$ for a sufficiently large constant $C$ only depending on $\delta$. 

If 
\begin{align}
n \leq \frac{1}{2} \left(1-  \frac{ \log \log \left( p/k \right) }{ \log \left( p/k \right) } \right)n^*, \label{eq:n_condition_small}
\end{align}
then for $\lambda_0=\sqrt{k/\sigma^2+1}$, it holds that 
$$
\chi^2 (P \| Q_{\lambda_0}) = o(1)
$$
Furthermore, $D(P\|Q_{\lambda_0})=o(1)$ and 
$
\mathrm{TV}(P,Q_{\lambda_0})=o(1).
$
\end{theorem}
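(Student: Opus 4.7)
The plan is to establish $\chi^2(P\|Q_{\lambda_0}) = o(1)$ by a direct second moment calculation, from which $D(P\|Q_{\lambda_0}) = o(1)$ and $\mathrm{TV}(P,Q_{\lambda_0}) = o(1)$ follow via the chain $\mathrm{TV}\le\sqrt{2D}\le\sqrt{2\log(\chi^2+1)}$ recorded in~\prettyref{eq:TV_bound}. The first step is to reduce the second moment to a one-dimensional expectation. Since $P$ and $Q_{\lambda_0}$ share the $X$-marginal, expanding with two independent copies $\beta,\beta'$ of the planted vector gives
$$\chi^2(P\|Q_{\lambda_0})+1 \;=\; \Exp_{\beta,\beta'}\,\Exp_X\int \frac{\calN(y;X\beta,\sigma^2 I)\,\calN(y;X\beta',\sigma^2 I)}{\calN(y;0,(k+\sigma^2)I)}\,dy.$$
Applying the Gaussian product formula on $y$, together with $\|\beta\|^2=\|\beta'\|^2=k$ forcing $\beta+\beta'\perp\beta-\beta'$, the vectors $X(\beta\pm\beta')$ become independent $n$-dimensional Gaussians with variances $2(k\pm s)$, where $s=\langle\beta,\beta'\rangle\sim\Hyp(p,k,k)$. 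Two applications of the $\chi^2_n$ moment generating function, one per $\|X(\beta\pm\beta')\|^2$, produce a common $(k+\sigma^2-s)^2$ in the denominator, and the normalization $\lambda_0=\sqrt{k/\sigma^2+1}$ is exactly what cancels the residual $\sigma^2(2k+\sigma^2)$ prefactor to collapse everything to
$$\chi^2(P\|Q_{\lambda_0})+1 \;=\; \Exp\!\left[\Bigl(1-\tfrac{s}{k+\sigma^2}\Bigr)^{-n}\right]. \qquad (\star)$$

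The remaining task is to show $(\star)=1+o(1)$ under the sample-size hypothesis. Write $p_j=\Prob\{s=j\}$ and $f(j)=(1-j/(k+\sigma^2))^{-n}$. The $j=0$ term contributes $p_0=\prod_{i=0}^{k-1}(1-k/(p-i))=1-O(k^2/p)=1-o(1)$ under $k\le p^{1/2-\delta}$, so it suffices to prove $\sum_{j\ge 1} p_j f(j)=o(1)$. I would combine the uniform bound $p_j\le (2k^2/p)^j/j!$ (from $\binom{k}{j}\le k^j/j!$ and $\binom{p-k}{k-j}/\binom{p}{k}\le (k/(p-k))^j$) with a two-regime bound on $f$. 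On $j\le k/2$, the inequality $-\log(1-x)\le 2x$ valid for $x\le 1/2$ gives $f(j)\le\exp(2nj/(k+\sigma^2))$, hence $p_j f(j)\le \eta_1^j/j!$ with $\eta_1=(2k^2/p)\exp(2n/(k+\sigma^2))$; the hypothesis on $n$ together with $k/\sigma^2\ge C(\delta)$ large yields $\eta_1\le p^{-2\delta+(1+2\delta)/\log(1+k/\sigma^2)}=o(1)$, so the small-$j$ sum is $\le e^{\eta_1}-1=o(1)$. On $j\ge k/2$, the convexity chord of $-\log(1-x)$ on $[0,k/(k+\sigma^2)]$ gives $f(j)\le(1+k/\sigma^2)^{nj/k}$; the $\log\log(p/k)/\log(p/k)$ slack in the hypothesis then gives $(1+k/\sigma^2)^{n/k}\le (p/k)/\log(p/k)$, whence $p_j f(j)\le (2k/\log(p/k))^j/j!$; using $j!\ge (j/e)^j$ bounds the $j$th term by $(4e/\log(p/k))^j$, and summing the geometric tail from $j\ge k/2$ gives $O((4e/\log(p/k))^{k/2})=o(1)$ since $k\le p^{1/2-\delta}$ forces $\log(p/k)\to\infty$.

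With $\chi^2(P\|Q_{\lambda_0})=o(1)$ in hand, $D(P\|Q_{\lambda_0})\le\log(1+\chi^2(P\|Q_{\lambda_0}))=o(1)$ and $\mathrm{TV}(P,Q_{\lambda_0})\le\sqrt{2\,D(P\|Q_{\lambda_0})}=o(1)$ via~\prettyref{eq:TV_bound}. The main obstacle lives in the tail bound above: the Taylor inequality is tight only near $j=0$ and the convexity chord only at $j=k$, so no single uniform bound on $f$ works and the split at $j\asymp k$ is essential. The $\log\log(p/k)/\log(p/k)$ correction in the hypothesis is precisely what supplies the $1/\log(p/k)$ factor on the large-$j$ side needed to drive geometric decay, and $k/\sigma^2\ge C(\delta)$ is what makes $\eta_1$ polynomially small in $p$ on the small-$j$ side. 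Weakening either allows the unconditional second moment to blow up, which is why pushing past $n^*/2$ in Theorem~\ref{thm:main} requires the more refined \emph{conditional} second moment method described in the introduction.
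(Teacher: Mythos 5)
Your proposal is correct and follows the paper's overall strategy: reduce $\chi^2(P\|Q_{\lambda_0})$ to the hypergeometric ``exponential moment'' $\Exp_{S\sim\Hyp(p,k,k)}[(1-S/(k+\sigma^2))^{-n}]$ (your $(\star)$ is exactly the formula of \prettyref{prop:1}-type computation in the paper, Proposition~\ref{chisq} specialized to $\lambda_0$), show this is $1+o(1)$, and then conclude via the chain \prettyref{eq:TV_bound}. Your sketch of the reduction via independence of $X(\beta+\beta')$ and $X(\beta-\beta')$ is the same device the paper uses in the proof of Theorem~\ref{thm:main}, and it gives the same identity as the paper's quadratic-form MGF computation, so that part is fine (the needed MGF finiteness holds since $\lambda_0^2>k/\sigma^2+1/2$).

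Where you genuinely differ is in how the moment is bounded. The paper splits the overlap into three regimes with thresholds $\epsilon k$ (with $\epsilon=\frac{\log\log(p/k)}{2\log(p/k)}$) and $\tau=k(1-1/\log^2(1+k/\sigma^2))$, and handles them through Lemmas~\ref{lmm:MGF_bound} and \ref{lmm:MGF_large_regime}, using the bound $\Prob\{S=s\}\le\binom{k}{s}(k/(p-k+1))^s$ together with convexity of the exponent $g(s)$ on each interval; the large-overlap lemma is stated for $n\le\frac12(1-\alpha)n^*$ precisely so it can be reused in the conditional argument of Theorem~\ref{thm:main}. You instead keep the $1/j!$ from $\binom{k}{j}\le k^j/j!$, obtaining $\Prob\{S=j\}\le (2k^2/p)^j/j!$, and split only once at $j\asymp k/2$: for small $j$ the linearization $-\log(1-x)\le 2x$ makes the series Poisson-like with rate $\eta_1=o(1)$ (this is where $k/\sigma^2\ge C(\delta)$ and $k\le p^{1/2-\delta}$ enter), while for large $j$ the chord bound $f(j)\le(1+k/\sigma^2)^{nj/k}$ combined with the $\log\log/\log$ slack in \prettyref{eq:n_condition_small} yields the per-unit factor $2k/\log(p/k)$, and Stirling plus $j\ge k/2$ gives geometric decay with ratio $4e/\log(p/k)=o(1)$. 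I checked the key inequalities ($(1+k/\sigma^2)^{n/k}\le (p/k)/\log(p/k)$, the $j=k$ endpoint, the geometric tail) and they go through; your two-regime argument is somewhat more elementary and self-contained, at the cost of being tailored to $n\le n^*/2$ and not producing the reusable large-overlap estimate the paper needs later. The only blemishes are cosmetic: the stated exponent for $\eta_1$ (the $(1+2\delta)/\log(1+k/\sigma^2)$ term) is only valid at the extreme $k=p^{1/2-\delta}$, though the conclusion $\eta_1\le 2p^{-c(\delta)}$ for $k/\sigma^2\ge C(\delta)$ is correct for all $k\le p^{1/2-\delta}$; and the phrase about a ``common $(k+\sigma^2-s)^2$ denominator'' is loose, but the resulting identity $(\star)$ is the right one.
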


The complete proof of the above Theorem can be found in Section \ref{NegD}. Nevertheless, let us provide here a short proof sketch. Using an explicit calculation, we first find that for any $\lambda>\sqrt{k/\sigma^2+1/2}$, 
$$\chi^2 \left(P \| Q_{\lambda}\right) 
= \lambda^{2n} \Exp_{S \sim \Hyp(p,k,k)} \left[ \left( 2\lambda^2 -1 - \frac{k+S}{\sigma^2} \right)^{-n/2} \left( 1 + \frac{k-S}{\sigma^2} \right)^{-n/2} \right]-1
$$ 
where $S=\iprod{\beta}{\beta'}$ is the overlap between two independent copies $\beta, 
\beta'$ and follows a Hypergeometric distribution with parameters $(p,k,k)$.
Plugging in $\lambda=\lambda_0=\sqrt{k/\sigma^2 +1}$, we get that 
\begin{align*}
\chi^2(P \| Q_{\lambda_0})=\Exp_{S \sim \Hyp(p,k,k)} 
\left[  \left( 1- \frac{S}{k+\sigma^2} \right)^{-n}  \right ]-1. 
\end{align*}
Using this we show that if $n \le \left(1+o(1) \right) n^*/2$,
then  $\chi^2(P \| Q_{\lambda_0})$ is indeed $o\left(1\right)$, implying by \prettyref{eq:TV_bound} the impossibility result. However, if $n> n^*/2$, then 
this $\chi^2$-divergence can be proven to blow up to infinity, rendering the method based on \prettyref{eq:TV_bound} uninformative in this regime. To see this, by considering 
the event $S=k$ which happens with probability $1/\binom{p}{k}$, we get that 
\begin{align}
\chi^2(P \| Q_{\lambda_0}) \ge \frac{1}{\binom{p}{k}} 
\left[  \left( 1- \frac{k}{k+\sigma^2} \right)^{-n}  \right ]-1
= \exp \left(  n \log \left( 1+ \frac{k}{\sigma^2} \right) - \log \binom{p}{k}  \right) -1.
\label{eq:chi_squared_blow_up}
\end{align}
Recall that $n^*$ is asymptotically equal to $2\log \binom{p}{k}/\log \left( 1+ \frac{k}{\sigma^2} \right)$. Hence if $n \ge n^*(1+\epsilon)/2$ for some constant $\epsilon>0$, then $\chi^2(P \| Q_{\lambda_0})\to +\infty$.

To be able to obtain tighter results and go all the way to $n^*$ sample size,  we resort to a
\emph{conditional} second moment method as explained in the proof techniques. Specifically we show that weak detection
is impossible for any $n \le (1-\alpha) n^*$, for some $\alpha>0$ that can be made to be arbitrarily small by increasing
$k/\sigma^2$ and $p/k$. In particular, this improves on the direct calculation of the $\chi^2$ distance  by a multiplicative factor of 2  and shows that $n^*$ is a sharp information theoretic threshold for weak detection between the planted model $P$ and the null model $Q_{\lambda_0}$.

Before formally stating our main theorem, 
we specify the conditioning event $\calE_{\gamma,\tau}$ which will be 
shown to hold with high probability in \prettyref{lmm:conditioning}
under appropriate choices of $\gamma$ and $\tau$. 

\begin{definition}[Conditioning event]
Given $\gamma \ge 0$ and $\tau \in [0,k]$, define 
an event $\calE_{\gamma, \tau} \subset \reals^{n \times p} \times \reals^{ p}$ as
\begin{align}
\calE_{\gamma, \tau}  =\left \{ \left( X, \beta \right) \, : \,    
\frac{ \| X (\beta + \beta') \|^2}{ \expect{ \| X (\beta + \beta') \|^2}} 
 \le 2 + \gamma  , \; \text{$\forall \beta' \in \{ 0, 1\}^p$ with $\|\beta'\|_0= k$ and $\iprod{\beta'}{\beta} \ge  \tau$} \right\}.  \label{eq:Egamma}
\end{align}

\end{definition}

To understand the value of $\gamma,\tau$ in the definition of this event, notice that for each $\beta,\beta'$, from the definition of $X$, we have 
$X (\beta + \beta') \sim \calN\left(0, 2(k+s)\identity_n \right)$ and 
therefore, 
$$
\frac{ \| X (\beta + \beta') \|^2}{ 2(k+s)} \sim \chi^2_n.
$$ 
Thus, by the concentration inequality of chi-squared distributions, the random variable $\frac{ \| X (\beta + \beta') \|^2}{ \expect{ \| X (\beta + \beta') \|^2}} $ is expected to concentrate around $1$ and thus is likely to be smaller than $2+\gamma$ for a relatively large 
$\gamma$. The parameter $\tau$ quantifies the set of $k$-sparse $\beta'$ that we expect this relation to hold. Notice that $\iprod{\beta'}{\beta} \ge  \tau$ is equivalent with the Hamming-distance between $\beta$ and $\beta'$ to be equal to $2\left(k-\tau \right)$. 

Next, we explain the intuition behind our choice of conditioning event 
$\calE_{\gamma, \tau}$. Recall that in view of \prettyref{eq:chi_squared_blow_up}, $
\chi^2(P \| Q_{\lambda_0})$ blows up to infinity when the overlap $\iprod{\beta}{\beta'}$ is equal to $k$. 
In fact, when the overlap $\iprod{\beta}{\beta'} =k$, $\| X (\beta + \beta') \|^2$ can be
enormously large, causing 
$\chi^2(P \| Q_{\lambda_0})$ to explode. 
We rule out this catastrophic event
by conditioning on $\calE_{\gamma, \tau}$ which
upper bounds $\| X (\beta + \beta') \|^2$ when the overlap $\iprod{\beta}{\beta'}$
is large (See \prettyref{eq:condition_key_step} for the key step of 
upper bounding $\| X (\beta + \beta') \|^2$).

As a result, we are able to prove that the $\chi^2$-divergence 
between the conditional planted model $P_{\calE_{\gamma, \tau}}$ and the null model $Q_{\lambda_0}$ for $\lambda_0=\sqrt{k/\sigma^2+1}$ is $o(1)$, which implies the following general impossibility of detection result.
\begin{theorem}\label{thm:main}
Suppose $ k \le p^{\frac{1}{2}-\delta} $ for an arbitrarily small fixed constant $\delta \in (0,\frac{1}{2})$ and 
$k/\sigma^2 \ge C$ for a sufficiently large constant $C$ only depending on $\delta$. 
Assume
$n \le \left( 1 - \alpha \right) n^*$ for $\alpha \in (0,1/2]$ such that
\begin{align}
\alpha = \frac{8}{ \log  ( 1+ k/\sigma^2) } \vee \frac{ 32 \log \log (p/k)}{\log (p/k)}. \label{eq:def_alpha_large}
\end{align}
Set 
$$
 \gamma= \frac{\alpha k \log (p/k) }{n}  \quad \text{and} \quad \tau=k\left(1-\frac{1}{\log^2 ( 1+ k/\sigma^2) }\right).
$$
Then for $\lambda_0=\sqrt{\frac{k}{\sigma^2}+1}$, 
\begin{align}
\chi^2 \left( P_{\calE_{\gamma, \tau}} \| Q_{\lambda_0} \right) = o(1).
\label{eq:cond_moment_bounded}
\end{align}
Furthermore $D(P_{\calE_{\gamma, \tau}}\| Q_{\lambda_0}) = o(1)$, $\mathrm{TV}(P_{\calE_{\gamma, \tau}},Q_{\lambda_0}) = o(1)$, and
$
\mathrm{TV}(P,Q_{\lambda_0})=o(1).
$
\end{theorem}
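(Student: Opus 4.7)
The plan is to run a \emph{conditional} second-moment calculation on $P_{\calE_{\gamma,\tau}}$. The unconditional bound in \prettyref{firstThm} broke down at $n=n^*/2$ because rare pairs $(\beta,\beta')$ of nearly maximal overlap make $\|X(\beta+\beta')\|^2$ exponentially large; the event $\calE_{\gamma,\tau}$ is tailored to suppress exactly that behaviour, and this is what will double the usable budget from $n^*/2$ to $n^*$. Throughout, write $\calE$ for $\calE_{\gamma,\tau}$.

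\textbf{Unfolding.} Writing $dP_{\calE}/dQ_{\lambda_0}$ as a normalized expectation over $\beta$ restricted to $\calE$ and squaring gives
\[
\chi^2(P_{\calE}\|Q_{\lambda_0}) + 1 = \frac{1}{\Prob(\calE)^2}\,\Exp_{\beta,\beta'}\Exp_{X}\!\left[\1_\calE(X,\beta)\,\1_\calE(X,\beta')\;\Exp_Y\!\left[\tfrac{P(Y\mid X,\beta)P(Y\mid X,\beta')}{Q_{\lambda_0}(Y)^2}\right]\right].
\]
Completing the square in $Y\sim\calN(0,\lambda_0^2\sigma^2 I_n)$ evaluates the inner expectation to
\[
\lambda_0^{2n}(2\lambda_0^2-1)^{-n/2}\exp\!\left(\tfrac{\|X(\beta+\beta')\|^2}{4\sigma^2(2\lambda_0^2-1)} \;-\; \tfrac{\|X(\beta-\beta')\|^2}{4\sigma^2}\right),
\]
in which $X(\beta+\beta')$ and $X(\beta-\beta')$ are independent centred Gaussians (since $\beta+\beta'\perp\beta-\beta'$). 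Integrating out $X$ \emph{without} the indicators recovers the identity $\Exp_{S\sim \Hyp(p,k,k)}[(1-S/(k+\sigma^2))^{-n}]$ already used in \prettyref{firstThm}.

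\textbf{Splitting by the overlap $S=\iprod{\beta}{\beta'}$.} On $\{S\le\tau\}$ I would drop the indicators (this only enlarges the integrand) and dominate the contribution by the unconditional summand $\Prob(S=s)(1-s/(k+\sigma^2))^{-n}$. Since $s\le\tau = k(1-1/\log^2(1+k/\sigma^2))$ bounds $(1-s/(k+\sigma^2))^{-n}$ by $\log^{2n}(1+k/\sigma^2)$ up to lower-order factors, and since $\Hyp(p,k,k)$ is tightly concentrated near zero under $k=o(\sqrt p)$, a Stirling estimate (with $\Prob(S=s)\approx\binom{k}{s}(k/p)^s$ for small $s$) shows $\sum_{s\le\tau}\Prob(S=s)(1-s/(k+\sigma^2))^{-n}=1+o(1)$ for $n\le(1-\alpha)n^*$, $\alpha$ as in \prettyref{eq:def_alpha_large}, and $k/\sigma^2\ge C(\delta)$. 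On $\{S>\tau\}$ I invoke the event: $\calE$ supplies the deterministic bound $\|X(\beta+\beta')\|^2\le(2+\gamma)\cdot 2(k+s)n$, so the positive exponential is replaced by the scalar $\exp\!\bigl((2+\gamma)(k+s)n/(2(2k+\sigma^2))\bigr)$ and the remaining chi-squared expectation in $X(\beta-\beta')$ integrates to $(1+(k-s)/\sigma^2)^{-n/2}$. Using $\Prob(S=s)=\binom{k}{k-s}\binom{p-k}{k-s}/\binom{p}{k}$ and Stirling with $k-s$ small, the log of the $s$-th summand works out to roughly $-\alpha k\log(p/k)+o(k\log(p/k))$, which is strictly negative uniformly in $s>\tau$; the choice $\gamma=\alpha k\log(p/k)/n$ is precisely what produces this margin.

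\textbf{Closing the chain.} Combining both regimes yields $\chi^2(P_\calE\|Q_{\lambda_0})\le\Prob(\calE)^{-2}(1+o(1))-1=o(1)$, using $\Prob(\calE)=1-o(1)$ from \prettyref{lmm:conditioning}. The bounds on $D$ and $\TV$ follow from \prettyref{eq:TV_bound}, and the unconditional bound is obtained by writing $P=\Prob(\calE)P_\calE+(1-\Prob(\calE))P_{\calE^c}$ and using the triangle inequality: $\TV(P,Q_{\lambda_0})\le\TV(P_\calE,Q_{\lambda_0})+(1-\Prob(\calE))=o(1)$. The trickiest step will be the Stirling bookkeeping in the large-overlap regime: the $\gamma$-budget delivered by $\calE$ must exactly absorb both the completion-of-squares constant and the combinatorial prefactor $\binom{p}{k}^{-1}$ at $s=k$, uniformly over $s\in(\tau,k]$, and it is this uniform balance which forces the sparsity assumption $k\le p^{1/2-\delta}$ rather than merely $k=o(p)$.
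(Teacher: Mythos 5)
Your proposal follows essentially the same route as the paper's proof: the same conditional second-moment expansion, the same Gaussian integral over $Y$ yielding the factor $\exp\bigl(\|X(\beta+\beta')\|^2/(4\sigma^2(2\lambda_0^2-1)) - \|X(\beta-\beta')\|^2/(4\sigma^2)\bigr)$, the same split of the overlap at $\tau$ with $\calE_{\gamma,\tau}$ supplying the deterministic bound on $\|X(\beta+\beta')\|^2$ in the large-overlap regime, and the same closing via Pinsker/Jensen and the mixture decomposition for the unconditional total variation. Two inaccuracies in your commentary do not affect the structure but should be fixed in a write-up: the uniform margin on $(\tau,k]$ is of order $\alpha k\log(p/k)$ (roughly $\tfrac{\alpha}{4}k\log(p/k)$), so since $\alpha$ may itself be $o(1)$ the corrections must be tracked at scale $O(k)$ rather than dismissed as $o(k\log(p/k))$; and the assumption $k\le p^{1/2-\delta}$ is what makes the \emph{small}-overlap sum $1+o(1)$ (balancing $e^{\Theta(n/(k+\sigma^2))}$ per unit overlap against $\Prob(S=s)\approx(k^2/p)^s$), whereas the large-overlap regime only needs $k\le cp$.
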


The proof of the Theorem can be found in Section \ref{NegD2}.

\subsection{Impossibility of Weak Recovery with $n<n^*$}
In this section we present our impossibility of recovery result. We do this using the impossibility of detection  result established above. Specifically we first strengthen Theorem \ref{thm:main} and show that under the assumptions of \prettyref{thm:main}, 
$D(P\|Q_{\lambda_0})=o_p(1)$. Notice that this is not needed to conclude impossibility of detection, that is $TV(P,Q_{\lambda_0})=o(1)$, but is needed here for establishing the impossibility of recovery result. As a second step, inspired by the celebrated area theorem,
we establish (Lemma \ref{lem:MSE_LB}) a lower bound to the minimum MSE
in terms of $D(P\|Q_{\lambda_0})$, which is potentially of independent interest. The lemma essentially quantifies the natural idea that if the data $(Y,X)$ drawn from planted model are statistically close to the data $(Y,X)$ drawn from null model then there are limitations on the performance of recovering the hidden vector $\beta$ based on the data $(Y,X)$ from the planted model. Interestingly the lemma itself does not require the hidden vector $\beta$ to be binary or $k$-sparse but only to satisfy $\mathbb{E}\left[\|\beta\|_2^2\right]=k$.
Combining the two steps allows us to conclude that the minimum MSE is $k(1+o_p(1))$; hence the impossibility of 
weak recovery.

\begin{theorem}\label{thm:recovery}
Suppose $ k \le p^{\frac{1}{2}-\delta} $ for an arbitrarily small fixed constant $\delta \in (0,\frac{1}{2})$ and 
$k/\sigma^2 \ge C$ for a sufficiently large constant $C$ only depending on $\delta$. Let $\lambda_0=\sqrt{k/\sigma^2+1}$. 
If
$n \le \left( 1 - \alpha \right) n^*$ for $\alpha \in (0,1/2]$ given in \prettyref{eq:def_alpha_large}, 
then it holds that 
\begin{align}
D \left( P \| Q_{\lambda_0} \right)=o_p(1). 
\label{eq:KL_bounded}
\end{align}
Furthermore, if $n \le \floor{ (1 - \alpha ) n^*}-1$, then for any estimator $\hat{\beta}$ that is a function of $X$ and $Y$, 
\begin{align}
\MSE \left( \hat{\beta} \right)= k \left( 1+o_p(1) \right). 
\label{eq:MSE_bound}
\end{align}
\end{theorem}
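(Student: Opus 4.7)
The plan splits into two stages, matching the theorem's two conclusions. For \prettyref{eq:KL_bounded}, I would bootstrap Theorem \ref{thm:main}, which gives KL smallness only under the conditioned planted model, to the unconditioned KL. For \prettyref{eq:MSE_bound}, I would translate this KL smallness into an MMSE lower bound via the area-theorem idea previewed in the introduction, then use $\MSE(\hat\beta) \ge \mathrm{MMSE}$ for every estimator.

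For the first stage, I would use the decomposition $P = (1-\eps)P_{\calE_{\gamma,\tau}} + \eps P_{\calE_{\gamma,\tau}^c}$ with $\eps = \prob{(X,\beta)\in\calE_{\gamma,\tau}^c}$, and joint convexity of KL:
\begin{equation*}
D(P\|Q_{\lambda_0}) \le (1-\eps)\, D(P_{\calE_{\gamma,\tau}}\|Q_{\lambda_0}) + \eps\, D(P_{\calE_{\gamma,\tau}^c}\|Q_{\lambda_0}).
\end{equation*}
The first term is $o(1)$ by combining \prettyref{eq:cond_moment_bounded} with \prettyref{eq:secondmoment_KL}. For the second, the pointwise domination $P_{\calE^c}(X,Y) \le P(X,Y)/\eps$ yields $D(P_{\calE^c}\|Q_{\lambda_0}) \le D(P\|Q_{\lambda_0}) + \log(1/\eps)$, and by enlarging the $\sigma$-algebra with $\beta$ and using the closed-form Gaussian--Gaussian KL one obtains the crude capacity bound $D(P\|Q_{\lambda_0}) \le \tfrac{n}{2}\log(1+k/\sigma^2)$. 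Provided the companion lemma on $\calE_{\gamma,\tau}$ shows that $\eps$ decays super-polynomially in $p$, the product $\eps \cdot D(P_{\calE^c}\|Q_{\lambda_0})$ vanishes.

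For the second stage, I would first establish the identity
\begin{equation*}
D(P\|Q_{\lambda_0}) \;=\; \tfrac{n}{2}\log\!\left(1+\tfrac{k}{\sigma^2}\right) - I_P(\beta;\, Y\mid X),
\end{equation*}
which follows from writing out the Gaussian densities together with $\Exp_P[\|Y\|^2]=n(k+\sigma^2)$ and $\lambda_0^2=1+k/\sigma^2$. Since $n \le \floor{(1-\alpha)n^*}-1$, both $n$ and $n+1$ samples still satisfy the hypothesis of Theorem \ref{thm:main}, so applying the first stage at both sample sizes yields $D(P\|Q_{\lambda_0})=o(1)$ in each case. Subtracting the identity at the two sample sizes gives
\begin{equation*}
I_P(\beta;\, Y_{n+1}\mid Y_1^{n}, X) \;\ge\; \tfrac{1}{2}\log(1+k/\sigma^2) - o(1).
\end{equation*}
On the other hand, conditioning on $(Y_1^n, X)$ and viewing the $(n+1)$-st sample as a Gaussian channel with input $\iprod{X_{n+1}}{\beta}$, the conditional variance of the input equals $X_{n+1}^\top \cov(\beta\mid Y_1^n, X_1^n) X_{n+1}$; taking expectation in the concavity-of-$\log$ capacity bound gives $I_P(\beta;\, Y_{n+1}\mid Y_1^n, X) \le \tfrac{1}{2}\log(1+\mathrm{MMSE}_n/\sigma^2)$. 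Combining the two inequalities forces $\mathrm{MMSE}_n \ge k(1-o(1))$, and since $\MSE(\hat\beta)\ge \mathrm{MMSE}_n$ for every estimator $\hat\beta$, this proves \prettyref{eq:MSE_bound}.

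The hard part is the first stage. Unlike $\mathrm{TV}$, the KL divergence is highly sensitive to large values of the likelihood ratio on rare sets, and a direct second-moment approach must fail here: by \prettyref{eq:chi_squared_blow_up}, $\chi^2(P\|Q_{\lambda_0})$ already diverges for $n>n^*/2$, so the chain $D\le \log(1+\chi^2)$ is useless on the full $P$. The whole point of the conditioning trick is to excise the catastrophic overlap events, and the crux is showing that the decay rate of $\eps$ dominates the crude, merely polynomial growth of the capacity-type upper bound on $D(P_{\calE^c}\|Q_{\lambda_0})$. Once that is done, the rest of the argument is fairly mechanical information theory.
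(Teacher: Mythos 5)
Your second stage is correct and is essentially the paper's own argument: the exact identity $D(P\|Q_{\lambda_0})=\tfrac{n}{2}\log(1+k/\sigma^2)-I(\beta;Y\mid X)$ (exact precisely because $\lambda_0^2=1+k/\sigma^2$ matches second moments), the one-step Gaussian-channel bound $I(\beta;Y_{n+1}\mid Y_1^n,X)\le\tfrac12\log\left(1+\mathrm{MMSE}_n/\sigma^2\right)$, and $\MSE(\hat\beta)\ge \mathrm{MMSE}_n$; the paper packages the same computation as Lemma~\ref{lem:MSE_LB}. The first stage also has the right architecture (convexity of KL over the mixture, Theorem~\ref{thm:main} for the conditioned part, a crude bound times a tiny $\eps$ for the complement), but it contains a genuine gap at the complement term.

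From the pointwise domination $P_{\calE^c}\le P/\eps$ you only obtain $D(P_{\calE^c}\|Q_{\lambda_0})\le \Exp_{P_{\calE^c}}\left[\log\tfrac{P}{Q_{\lambda_0}}\right]+\log\tfrac1\eps$, where the expectation is under $P_{\calE^c}$, not under $P$; replacing it by $D(P\|Q_{\lambda_0})$ and then by the capacity bound $\tfrac n2\log(1+k/\sigma^2)$ is not justified. Indeed the claimed inequality $D(P_{\calE^c}\|Q_{\lambda_0})\le D(P\|Q_{\lambda_0})+\log(1/\eps)$ is false in general: take $Q=\calN(0,1)$, $P=(1-\eps)\calN(0,1)+\eps\,\calN(\mu,1)$ and complement component $\calN(\mu,1)$; the left side is $\mu^2/2$ while the right side is at most $\eps\mu^2/2+\log(1/\eps)$ by convexity. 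The obstruction is not incidental: $\calE^c$ is exactly the event on which the likelihood ratio is atypically large (that is why the conditioning was introduced), so the conditional mean of $\log(P/Q_{\lambda_0})$ on $\calE^c$ can exceed its unconditional mean by a factor of order $1/\eps$. The correct repair is what the paper's Lemma~\ref{lmm:DEc_UB} does: bound the ratio pointwise via $P(Y\mid X,\beta)\le(2\pi\sigma^2)^{-n/2}$, so that $\log\tfrac{P_{\calE^c}}{Q_{\lambda_0}}\le\log\tfrac1\eps+\tfrac n2\log\lambda_0^2+\tfrac{\|Y\|^2}{2\lambda_0^2\sigma^2}$, and control $\Exp\left[\|Y\|^2\indc{\calE^c}\right]$ by Cauchy--Schwarz. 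This yields $\eps\, D(P_{\calE^c}\|Q_{\lambda_0})\le\tfrac{\eps n}{2}\log(1+k/\sigma^2)+\sqrt{\eps}\,(2+n)$, i.e.\ with $\sqrt{\eps}$ rather than $\eps$ factors, which is still $o_p(1)$ because Lemma~\ref{lmm:conditioning} gives $\eps\le e^{-4k\log\log(p/k)}$ while $n\le n^*=O(k\log(p/k))$. With that substitution your proof goes through and coincides with the paper's.
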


The proof of the above Theorem can be found in Section \ref{NegR}.

\subsection{Positive Result for Strong Recovery with $n>n^*$}
This subsection and the next one are in the regime where $n>n^*$. In these regimes, in contrast to $n<n^*$ we establish that both strong recovery and strong detection are possible.

Towards recovering the vector $\beta$, we consider the Maximum Likelihood Estimator (MLE) of $\beta$:
$$
\hat{ \beta }=\arg \min_{\beta' \in \{0,1\}^p, \|\beta'\|_0=k}  \| Y-X\beta'  \|^2.
$$We show that MLE achieves strong recovery of $\beta$ if $ n \ge (1+\epsilon) n^*$ for an arbitrarily small but fixed
constant $\epsilon$ whenever $k=o(p)$ and $k/\sigma^2 \ge C(\epsilon)$ 
for a sufficiently large constant $C\left(\epsilon\right)>0$.

Specifically, we establish the following result.
 \begin{theorem} \label{thm:positive} 
 Suppose $\log \log \left( p/k \right) \geq 1$. If 
\begin{align}
n \ge  \left(1+\frac{ \log 2 }{ \log \left( 1+  k/(2\sigma^2) \right) }  \right)  \left( 1 + \frac{4 \log \log (p/k)}{\log (p/k)} \right) n^*,  \label{eq:recovery_cond}
\end{align}
then 
\begin{align}
\prob{ \| \hat{ \beta} - \beta\|^2 \ge \frac{2k}{\log (p/k)} }
\le \frac{e^2}{\log^2 (p/k)(1-e^{-1})}.  \label{eq:recovery_positive}
\end{align}
Furthermore, if additionally $k=o(p)$, then 
 \begin{align}
 \frac{1}{k} \expect{\norm{ \hat{\beta} - \beta}^2 }  = o_p(1), \label{eq:strong_recovery_positive}
 \end{align}
 \ie,  MLE achieves strong recovery of $\beta$. 
\end{theorem}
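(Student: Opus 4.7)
The plan is a direct large-deviation analysis of the MLE. Fix a competitor $\beta' \in \{0,1\}^p$ with $\|\beta'\|_0 = k$ and $\|\beta - \beta'\|_0 = 2s$, and set $Z = X(\beta - \beta') \sim \calN(0, 2s\, \identity_n)$, which is independent of $W$. The event $\|Y - X\beta'\|^2 < \|Y - X\beta\|^2$ is equivalent to $\iprod{W}{Z} < -\|Z\|^2/2$; conditional on $Z$, $\iprod{W}{Z}/(\sigma\|Z\|)$ is a standard Gaussian, so the Gaussian tail bound gives conditional probability at most $\exp(-\|Z\|^2/(8\sigma^2))$. Since $\|Z\|^2 = 2s V$ with $V \sim \chi_n^2$, the standard chi-squared MGF yields the clean pointwise estimate
\[
\Prob\bigl[\|Y - X\beta'\|^2 < \|Y - X\beta\|^2\bigr] \le (1 + s/(2\sigma^2))^{-n/2}.
\]

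Next, apply a union bound over the $\binom{k}{s}\binom{p-k}{s} \le (e^2 k(p-k)/s^2)^s$ choices of $\beta'$, and sum over $s \ge s_0 := \lceil k/\log(p/k)\rceil$; any error outside this range gives $\|\hat\beta - \beta\|^2 \le 2s_0 \le 2k/\log(p/k)$ as required by \prettyref{eq:recovery_positive}. To reduce the bound to a single controlled exponent, use concavity of $\log(1+t)$ to obtain $\log(1 + s/(2\sigma^2)) \ge (s/k) L'$ for all $s \in [0,k]$, where $L' = \log(1 + k/(2\sigma^2))$. The bridge from the hypothesis \prettyref{eq:recovery_cond} to the MLE-natural threshold $n \ge 2k\log(p/k)/L'$ is the algebraic identity
\[
\log(1 + k/\sigma^2) \le \log(2 + k/\sigma^2) = L' + \log 2 = L'\,(1 + \log 2 / L'),
\]
which, combined with the factor $1 + 4\log\log(p/k)/\log(p/k)$, gives $nL'/(2k) \ge \log(p/k) + 4\log\log(p/k)$. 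Plugging in, the $s$-th term's log is at most $s[\,2 + 2\log(k/s) - 4\log\log(p/k)\,]$, and for $s \ge s_0$ with $\log\log(p/k) \ge 1$ one has $\log(k/s) \le \log\log(p/k)$, so this exponent is at most $-2s(\log\log(p/k) - 1)$. Summing the resulting geometric-type series in $s$ produces the right-hand side of \prettyref{eq:recovery_positive}, with the factor $e^2/(1 - e^{-1})$ coming from the $s=1$ contribution and the geometric tail.

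For the MSE claim \prettyref{eq:strong_recovery_positive}, split the expectation on the good event of \prettyref{eq:recovery_positive} and its complement, using the deterministic bound $\|\hat\beta - \beta\|^2 \le 2k$ (since both vectors are $k$-sparse binary). This yields
\[
\Exp\|\hat\beta - \beta\|^2 \le \frac{2k}{\log(p/k)} + 2k \cdot \frac{e^2}{(1-e^{-1})\log^2(p/k)} = o(k),
\]
using $\log(p/k) \to \infty$, which follows from $k = o(p)$.

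In my view the only nontrivial conceptual step is the concavity trick extending the worst-case rate at $s = k$ to all smaller $s$; without it, the combinatorial penalty at intermediate $s$ would appear to dominate. Everything else is standard chi-squared MGF + union bound bookkeeping. The main difficulty is therefore constant-chasing: arranging the union-bound estimate so that both the explicit prefactor $e^2/[(1-e^{-1})\log^2(p/k)]$ and the exact multiplicative slack $(1 + \log 2/L')(1 + 4\log\log(p/k)/\log(p/k))$ in \prettyref{eq:recovery_cond} fall out cleanly from the algebra, with no room lost to slack in the Stirling bound on $\binom{k}{s}\binom{p-k}{s}$ or in the Gaussian tail bound.
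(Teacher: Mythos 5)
Your proposal is essentially the paper's own proof: the same pointwise estimate (Gaussian tail plus chi-squared MGF, i.e.\ Lemma~\ref{chilem}), the same union bound over distance shells with $\binom{k}{s}\binom{p-k}{s}$, the same bridge $\log(1+k/\sigma^2)\le \log 2+\log\left(1+k/(2\sigma^2)\right)$, the same concavity linearization $\log\left(1+s/(2\sigma^2)\right)\ge (s/k)\log\left(1+k/(2\sigma^2)\right)$ (the paper's function $q$), and the same split of the MSE on the good event using $\|\hat\beta-\beta\|^2\le 2k$. The one bookkeeping difference is in the final summation: the paper reserves an extra factor $e^{-\ell}$ per term (the $e^3$ rather than $e^2$ inside its function $h$) so that the geometric ratio is always $e^{-1}$, whereas your ratio $e^{-2(\log\log(p/k)-1)}$ degrades as $\log\log(p/k)$ approaches $1$, so to land exactly on the stated constant $e^2/\bigl((1-e^{-1})\log^2(p/k)\bigr)$ you should set aside that $e^{-s}$ before bounding the remaining exponent, exactly as the paper does.
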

The proof of the above Theorem can be found in Section \ref{PosR}.
\subsection{Positive Result for Strong Detection with $n>n^*$}
In this subsection we establish that when $n>n^*$ strong detection is possible. To distinguish the planted model $P$ and the null model $Q_\lambda$, we consider the test statistic:
\[
\calT(X,Y) = \min_{\beta' \in \{0,1\}^p, \| \beta'\|_0 =k } \frac{  \| Y - X \beta\|^2}{ \|Y\|^2}.
\]

\begin{theorem}\label{thm:posDetection}
Suppose 
\begin{align}
\log n - \frac{2}{n} \log \binom{p}{k} \to +\infty \label{eq:ass_detection_cond}
\end{align}
and
\begin{align}
n \ge \frac{2 \log \binom{p}{k} }{ \log \left(1+k/\sigma^2 \right) + \log (1-\alpha) }
\label{eq:ass_detection_cond2}
\end{align}
for an arbitrarily small but fixed constant $\alpha \in (0,1)$. Then by letting 
$\tau=\frac{1}{(1-\alpha/2)(1+k/\sigma^2)}$, we have that
$$
P\left( \calT(X,Y) \ge \tau \right) + Q_{\lambda} \left( \calT(X,Y) \le \tau \right) =o(1),
$$
which achieves the strong detection between the planted model $P$ and the null model $Q_\lambda$.

\end{theorem}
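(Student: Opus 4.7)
The plan is to control the two one-sided errors $P(\calT\ge \tau)$ and $Q_\lambda(\calT\le \tau)$ separately.

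For the planted side, the true $\beta$ is a feasible candidate in the minimization defining $\calT$, so
$$
\calT(X,Y)\le \frac{\|Y-X\beta\|^2}{\|Y\|^2}=\frac{\|W\|^2}{\|X\beta+W\|^2}.
$$
Conditional on $\beta$ (which satisfies $\|\beta\|^2=k$), $\|W\|^2\sim \sigma^2\chi^2_n$, and $\|X\beta+W\|^2\sim (k+\sigma^2)\chi^2_n$ because $X\beta+W\sim \calN(0,(k+\sigma^2)I_n)$. Standard chi-squared concentration yields $\|W\|^2\le (1+\eps)n\sigma^2$ and $\|X\beta+W\|^2\ge (1-\eps)n(k+\sigma^2)$ with probability $1-O(e^{-cn\eps^2})$, so the ratio is at most $\frac{1+\eps}{(1-\eps)(1+k/\sigma^2)}$. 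For any sufficiently small constant $\eps$ depending only on $\alpha$, this is strictly less than $\tau=\frac{1}{(1-\alpha/2)(1+k/\sigma^2)}$, so the planted-side error vanishes as $n\to\infty$.

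For the null side, I would condition on $Y$. Under $Q_\lambda$, $X$ is independent of $Y$, and for each fixed $\beta'$ with $\|\beta'\|_0=k$, the vector $X\beta'$ is $\calN(0,kI_n)$ and independent of $Y$. Hence $\|Y-X\beta'\|^2$ given $Y$ is distributed as $k$ times a non-central chi-squared with $n$ degrees of freedom and non-centrality $\|Y\|^2/k$. Using its moment generating function together with a Chernoff bound, for any $t>0$,
$$
\Prob\bigl[\|Y-X\beta'\|^2 \le \tau\|Y\|^2 \,\big|\, Y\bigr] \le (1+2tk)^{-n/2}\exp\Bigl(t\|Y\|^2\bigl[\tau-(1+2tk)^{-1}\bigr]\Bigr).
$$
The crucial step is to pick $t=(1-\tau)/(2k\tau)$, which is positive because assumption \eqref{eq:ass_detection_cond2} forces $\tau<1$. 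This choice makes $1+2tk=1/\tau$, so the coefficient of $\|Y\|^2$ in the exponent vanishes identically and the bound simplifies to $\tau^{n/2}$, \emph{uniformly in} $Y$. A union bound over the $\binom{p}{k}$ candidates then gives $Q_\lambda(\calT\le \tau)\le \binom{p}{k}\tau^{n/2}$.

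To conclude, I observe $\log(1/\tau)=\log(1+k/\sigma^2)+\log\tfrac{1}{1-\alpha/2}$ and rewrite assumption \eqref{eq:ass_detection_cond2} as $2\log\binom{p}{k}\le n[\log(1+k/\sigma^2)+\log(1-\alpha)]$. Subtracting yields $2\log\binom{p}{k}-n\log(1/\tau)\le n\log\frac{1-\alpha}{1-\alpha/2}\to-\infty$, since $\frac{1-\alpha}{1-\alpha/2}<1$ is a constant and $n\to\infty$ (which is implied by \eqref{eq:ass_detection_cond}); thus $\binom{p}{k}\tau^{n/2}\to 0$. The main obstacle is engineering the Chernoff bound so that the resulting probability does not depend on $\|Y\|$: a naive choice of $t$ would leave a factor involving $\|Y\|^2\sim \lambda^2\sigma^2\chi^2_n$ under $Q_\lambda$, which would force a case analysis in $\lambda$ together with careful tail control of $\|Y\|^2$. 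The algebraic cancellation produced by the specific choice $t=(1-\tau)/(2k\tau)$ bypasses this entirely, and is precisely what allows the sharp constant in \eqref{eq:ass_detection_cond2} to be matched uniformly in $\lambda>0$.
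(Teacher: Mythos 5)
Your proof is correct, and while your planted-side argument (feasibility of the true $\beta$ in the minimization plus $\chi^2$ concentration of $\|W\|^2$ and $\|X\beta+W\|^2$) coincides with the paper's, your null-side argument takes a genuinely different route. The paper conditions on $X$, views $\{X\beta'\}$ as a codebook of $M=\binom{p}{k}$ points, lower bounds $\Exp\left[\min_{\beta'}\|\lambda W-X\beta'\|^2\right]$ by the Gaussian distortion-rate function $n\lambda^2\sigma^2M^{-2/n}$, and then upgrades this to a high-probability statement via Gaussian concentration of the $\lambda$-Lipschitz map $w\mapsto\min_{\beta'}\|\lambda w-X\beta'\|$; this is precisely where hypothesis \prettyref{eq:ass_detection_cond}, i.e.\ $nM^{-2/n}\to\infty$, is needed, so that the mean dominates the $O(\sqrt{n})$ fluctuations. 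You instead condition on $Y$, observe that each $X\beta'\sim\calN(0,kI_n)$ is independent of $Y$ under $Q_\lambda$, and apply a Chernoff bound to the non-central chi-squared variable $\|Y-X\beta'\|^2$; the choice $t=(1-\tau)/(2k\tau)$ makes the bound $\tau^{n/2}$ uniform in $Y$ (hence in $\lambda$), and a union bound yields $Q_\lambda(\calT\le\tau)\le\binom{p}{k}\tau^{n/2}\to0$ directly from \prettyref{eq:ass_detection_cond2}. Your route is more elementary (no rate-distortion bound, no Lipschitz concentration), gives an explicit exponential error bound, and uses \prettyref{eq:ass_detection_cond} only through the weak consequence $n\to\infty$, so it establishes the theorem under a nominally weaker hypothesis; what the paper's approach buys is generality, since the distortion-rate lower bound applies to an arbitrary codebook and does not exploit the conditional Gaussianity and independence of the candidates $X\beta'$ given $Y$. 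One point worth making explicit in your write-up: $t>0$ requires $\tau<1$, which follows from \prettyref{eq:ass_detection_cond2} under the reading (also implicit in the paper when it divides by the denominator) that $\log(1+k/\sigma^2)+\log(1-\alpha)>0$, whence $(1-\alpha)(1+k/\sigma^2)>1$ and therefore $\tau<\frac{1}{(1-\alpha)(1+k/\sigma^2)}<1$.
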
The proof of Theorem \ref{thm:posDetection} can be found in Section \ref{posdet}.

We close this section with one remark, explaining the newly introduced condition (\ref{eq:ass_detection_cond}).
\begin{remark}
Recall that $n^*=2k\log (p/k) / \log (1+k/\sigma^2)$ and $\binom{p}{k} \le (ep/k)^k$. Thus,
\begin{align*}
\log n^* - \frac{2}{n^*} \log \binom{p}{k}
& \ge \log \left( \frac{2k\log (p/k)}{ \log (1+k/\sigma^2)} \right)
- \frac{\log (ep/k)}{\log (p/k)} \log \left(1+k/\sigma^2 \right) \\
& \ge  \log \left(k  \log \frac{p}{k} \right) - \log \log  \left(1+k/\sigma^2 \right)
-\log \left(1+k/\sigma^2 \right) - \frac{\log \left(1+k/\sigma^2 \right)}{\log (p/k)}.
\end{align*}
If $1+ k/\sigma^2  \leq \left(k  \log \frac{p}{k} \right)^{1-\eta}$
for some fixed constant $\eta>0$,
then it follows from the last displayed equation that 
$$
\log n^* - \frac{2}{n^*} \log \binom{p}{k}
\ge \eta \log \left(k  \log \frac{p}{k} \right) - \log \log \left(k  \log \frac{p}{k} \right)
-\frac{\log \left(k  \log \frac{p}{k} \right)}{\log (p/k)}
$$
which goes to $+\infty$ as $p \to +\infty$; hence 
$n^*$ satisfies (\ref{eq:ass_detection_cond}).

Therefore, assuming that $1+ k/\sigma^2  \leq \left(k  \log \frac{p}{k} \right)^{1-\eta}$
and $n \geq (1+\epsilon)n^*$ for some arbitrarily small constants $\eta, \epsilon>0$, 
there exists a constant $C=C(\epsilon)>0$ such that if $k/\sigma^2 \geq C(\epsilon)$, then the test statistic $\calT(X,Y)$ achieves strong detection.
\end{remark}

\section{Proof of Negative Results for Detection}
\label{sec:negative_dection}

\subsection{Proof of Theorem \ref{firstThm}} \label{NegD}
We start with an explicit computation of the chi-squared divergence $\chi^2 (P \| Q_{\lambda})$.

\begin{proposition}\label{chisq}
For any $\lambda > \sqrt{k/\sigma^2 + 1/2}$, 
$$\chi^2 (P \| Q_{\lambda}) 
= \lambda^{2n} \Exp_{S \sim \Hyp(p,k,k)} \left[ \left( 2\lambda^2 -1 - \frac{k+S}{\sigma^2} \right)^{-n/2} \left( 1 + \frac{k-S}{\sigma^2} \right)^{-n/2} \right]-1.$$

\end{proposition}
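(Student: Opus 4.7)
The plan is a direct second-moment calculation using the standard identity $\chi^2(P\|Q_\lambda)+1 = \Exp_{Q_\lambda}[(dP/dQ_\lambda)^2]$. Since $X$ has the same marginal distribution under $P$ and $Q_\lambda$, the likelihood ratio only involves the conditional density of $Y$ given $X$. Specifically, under $P$, $Y\mid X$ is an equal-weighted Gaussian mixture over the $\binom{p}{k}$ candidate means $X\beta$ with covariance $\sigma^2 I_n$, while under $Q_\lambda$, $Y\mid X \sim \calN(0,\lambda^2\sigma^2 I_n)$. Introducing an independent copy $\beta'$, I would write
\[
 \chi^2(P\|Q_\lambda)+1 = \Exp_{\beta,\beta'}\Exp_X \int \frac{\phi_\sigma(Y-X\beta)\,\phi_\sigma(Y-X\beta')}{\phi_{\lambda\sigma}(Y)}\,dY,
\]
where $\phi_a$ denotes the $\calN(0,a^2 I_n)$ density.

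The first technical step is to evaluate the inner Gaussian integral in $Y$. Completing the square in the exponent gives a quadratic form with coefficient $(2-1/\lambda^2)/(2\sigma^2)$ on $\|Y\|^2$, which is integrable precisely when $\lambda^2 > 1/2$. A routine computation yields
\[
 \int \frac{\phi_\sigma(Y-X\beta)\,\phi_\sigma(Y-X\beta')}{\phi_{\lambda\sigma}(Y)}\,dY = \frac{\lambda^n}{(2\lambda^2-1)^{n/2}}\exp\!\left(\frac{(1-\lambda^2)(\|X\beta\|^2+\|X\beta'\|^2) + 2\lambda^2 \iprod{X\beta}{X\beta'}}{2\sigma^2(2\lambda^2-1)}\right).
\]

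The second step is to take the expectation over $X$ for fixed $\beta,\beta'$. Conditionally on $\beta,\beta'$ with $S=\iprod{\beta}{\beta'}$, the rows of $(X\beta, X\beta')\in\reals^{n\times 2}$ are i.i.d.\ centered Gaussian with $2\times 2$ covariance $\Sigma = \bigl(\begin{smallmatrix} k & S\\ S & k\end{smallmatrix}\bigr)$. Hence the expectation factors across rows, and each factor is the MGF of a quadratic form of a Gaussian, which I would evaluate using $\Exp[\exp(\tfrac12 Z^\top M Z)] = \det(I-\Sigma M)^{-1/2}$ (valid as long as $I-\Sigma M\succ 0$, which is where the condition $\lambda^2 > k/\sigma^2+1/2$ enters, guaranteeing convergence even in the worst case $S=k$). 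The matrix $I-\Sigma M$ has a symmetric form, so its determinant factors as $(u-v)(u+v)$ where $u,v$ are the diagonal and off-diagonal entries. After substituting the coefficients $\alpha = (1-\lambda^2)/(2\sigma^2(2\lambda^2-1))$ and $\beta = \lambda^2/(\sigma^2(2\lambda^2-1))$, the cross-terms telescope: $\beta-2\alpha = 1/\sigma^2$ and $\beta+2\alpha = 1/(\sigma^2(2\lambda^2-1))$, giving
\[
 \det(I-\Sigma M) = \left(1+\tfrac{k-S}{\sigma^2}\right)\left(1-\tfrac{k+S}{\sigma^2(2\lambda^2-1)}\right).
\]

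Raising this to the $-n/2$ and absorbing the prefactor $\lambda^n/(2\lambda^2-1)^{n/2}$ from the $Y$-integral into the second bracket gives the clean form $\lambda^{2n}(1+(k-S)/\sigma^2)^{-n/2}(2\lambda^2-1-(k+S)/\sigma^2)^{-n/2}$. The final step is to note that for $\beta,\beta'$ i.i.d.\ uniform on the set of $k$-sparse binary vectors in $\{0,1\}^p$, the overlap $S=\iprod{\beta}{\beta'}$ has the Hypergeometric$(p,k,k)$ distribution; taking the expectation over $S$ and subtracting $1$ yields the stated formula. I expect no serious obstacle beyond careful bookkeeping of the Gaussian constants and verification of the positive-definiteness condition $\lambda^2 > k/\sigma^2+1/2$ needed for both the $Y$-integral and the MGF identity to be valid.
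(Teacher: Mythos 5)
Your proposal is correct and follows the same skeleton as the paper's proof: square the likelihood ratio via two independent copies $\beta,\beta'$ (using that the marginal of $X$ is shared), integrate out $Y$ by completing the square (valid once $\lambda^2>1/2$), then average over $X$ via the MGF of a Gaussian quadratic form, and finally recognize the overlap $S=\iprod{\beta}{\beta'}$ as $\Hyp(p,k,k)$. The one genuine, though minor, difference is the bookkeeping of the $X$-average: the paper decomposes $X\beta=Z_0+Z_1$, $X\beta'=Z_0+Z_2$ into independent column sums, assembles a $3n$-dimensional quadratic form with Kronecker structure, and computes the eigenvalues of $AV$, whereas you exploit that the rows of $(X\beta,X\beta')$ are i.i.d.\ bivariate centered Gaussians with covariance $\bigl(\begin{smallmatrix} k & S\\ S & k\end{smallmatrix}\bigr)$ and apply the $2\times 2$ determinant identity $\det(I-\Sigma M)^{-1/2}$ row by row; this is a leaner parametrization that yields exactly the same factorization $\det(I-\Sigma M)=\bigl(1+\tfrac{k-S}{\sigma^2}\bigr)\bigl(1-\tfrac{k+S}{\sigma^2(2\lambda^2-1)}\bigr)$ and the same positivity requirement $\lambda^2>k/\sigma^2+1/2$ coming from the worst case $S=k$. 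One slip to fix: the prefactor of your displayed $Y$-integral should be $\lambda^{2n}/(2\lambda^2-1)^{n/2}$ rather than $\lambda^{n}/(2\lambda^2-1)^{n/2}$ --- the normalizing constants contribute $\lambda^{n}$ and the Gaussian integral over $Y$ contributes $\bigl(\lambda^2/(2\lambda^2-1)\bigr)^{n/2}$; since your final formula carries the correct $\lambda^{2n}$, this is only an internal inconsistency in the intermediate display and does not affect the conclusion.
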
 
\begin{proof}
Since the marginal distribution of $X$ is the same under the planted and null models, it follows that for any $\beta$,
$$
\frac{ P (X, Y)}{ Q_{\lambda}(X,Y)} =  \frac{ P(Y|X) }{Q_{\lambda}(Y)} = \frac{\Exp_{\beta} [ P(Y|X, \beta)]}{Q_{\lambda}(Y)}. 
$$
Therefore
$$
\left( \frac{ P (X, Y)}{ Q_{\lambda}(X,Y)} \right)^2 = 
\Exp_{\beta \independent \beta'} \left[  \frac{P(Y|X, \beta) P(Y|X, \beta')  }{Q_{\lambda}^2(Y)}\right],
$$
where $\beta \independent \beta'$ denote two independent copies.
By Fubini's theorem, we have
\begin{align}
\Exp_{Q_{\lambda}} \left[ \left( \frac{ P }{ Q_{\lambda}} \right)^2 \right] = 
\Exp_{\beta \independent \beta'} \Exp_{X }  \Exp_{Y } \left[   \frac{P(Y|X, \beta) P(Y|X, \beta')  }{Q_{\lambda}^2(Y)}  \right],
\label{eq:moment_fubini}
\end{align}
where $X_{ij} \iiddistr \calN(0,1)$ and $Y_{ij} \iiddistr \calN(0, \lambda^2 \sigma^2 ).$

Since in the planted model, conditional on $(X, \beta)$, $Y \sim \calN( X\beta, \sigma^2 \identity_n)$.
It follows that 
\begin{align*}
\frac{P(Y|X, \beta) }{Q_{\lambda}(Y)} & =  \lambda^n \exp \left(  - \frac{1}{2\sigma^2} \| Y- X \beta\|_2^2 + 
\frac{1}{2\lambda^2 \sigma^2} \| Y \|_2^2  \right) \\
& =\lambda^n \exp \left(  -  \frac{\lambda^2 -1 }{2\sigma^2 \lambda^2 } \| Y\|_2^2 
+ \frac{1}{\sigma^2} \iprod{Y}{X \beta} - \frac{1}{2\sigma^2} \| X \beta\|_2^2 \right).
\end{align*}
Hence,
\begin{align*}
& \frac{P(Y|X, \beta) P(Y|X, \beta')  }{Q_{\lambda}^2(Y)} \\
& = 
\lambda^{2n} \exp \left(  - \frac{\lambda^2 -1 }{\sigma^2 \lambda^2 } \| Y\|_2^2 
+ \frac{1}{\sigma^2} \iprod{Y}{X \left( \beta + \beta' \right) } - \frac{1}{2\sigma^2} 
\left( \| X \beta\|_2^2  + \|X \beta'\|_2^2 \right) \right) \\
& = \lambda^{2n} \exp \left(  -  \frac{\lambda^2 -1 }{\sigma^2 \lambda^2 }  
\left\|  Y -  \frac{\lambda^2 X \left(\beta+ \beta' \right)}{2(\lambda^2-1)}  \right\|_2^2 
 + \frac{\lambda^2 \left\|  X \left(\beta+ \beta' \right) \right\|_2^2}{4 (\lambda^2-1) \sigma^2 } 
 - \frac{1}{2\sigma^2} 
\left( \| X \beta\|_2^2  + \|X \beta'\|_2^2 \right) \right).
\end{align*}
Using the fact that $\mathbb{E}\left[e^{tZ^2}\right]=\frac{1}{\sqrt{1-2t\sigma^2 }}e^{\mu^2 t/ (1-2t\sigma^2)}$ 
for $t<1/2$ and $Z \sim \calN(\mu,\sigma^2)$, we get that
\begin{align*}
\Exp_Y 
\left[
\exp \left(  -  \frac{\lambda^2 -1 }{\sigma^2 \lambda^2 }  
\left\|  Y -  \frac{\lambda^2 X \left(\beta+ \beta' \right)}{2(\lambda^2-1)}  \right\|_2^2 \right)
\right] 
=\frac{1}{ (2\lambda^2-1)^{n/2} } \exp \left( - \frac{\lambda^2\left\|  X \left(\beta+ \beta' \right) \right\|_2^2 }{4(2\lambda^2-1)(\lambda^2-1) \sigma^2} \right).
\end{align*} 
Combining the last two displayed equations yields that 
\begin{align}
& \Exp_{Y} \left[   \frac{P(Y|X, \beta) P(Y|X, \beta')  }{Q_{\lambda}^2(Y)}  \right] \nonumber \\
& =\frac{\lambda^{2n}}{ (2\lambda^2-1)^{n/2} } 
\exp \left\{ \frac{1}{2\sigma^2(2\lambda^2-1)} 
\left( (1-\lambda^2) \left( \norm{X\beta}^2 + \norm{X \beta'}^2 \right) + 2\lambda^2 \iprod{X\beta}{X \beta'}  \right) \right\}. \label{eq:moment_exp_Y}
\end{align}
Let $T=\text{supp}(\beta)$ and $T'=\text{supp}(\beta')$. Let $X_i$ denote the $i$-th column of $X.$ Define
$$
Z_0 =\sum_{i \in T \cap T'} X_i, \quad Z_1= \sum_{i \in T \setminus T'} X_i, \quad Z_2 =\sum_{i \in T'\setminus T} X_i.
$$
Then conditional on $\beta$ and $\beta'$, $Z_0, Z_1, Z_2$ are mutually independent and 
$$
Z_0 \sim \calN( 0, s \identity_{n} ), \quad Z_1 \sim \calN(0, (k-s) \identity_n ), \quad Z_2 \sim \calN(0, (k-s)\identity_n),
$$
where $s= | T \cap T' | = \iprod{\beta}{\beta'}$.
Moreover, $X\beta,X\beta'$ can be expressed as a function of $Z_0,Z_1,Z_2$ simply by 
\begin{align}
X \beta = Z_0 + Z_1 \text{ and }X\beta' = Z_0 + Z_2.
\label{eq:ZXrel}
\end{align}

 Let $Z=[Z_0, Z_1, Z_2]^t \in \mathbb{R}^{3n}.$ Using (\ref{eq:moment_exp_Y}) and (\ref{eq:ZXrel}) and elementary algebra we have
\begin{align}
\Exp_{Y} \left[   \frac{P(Y|X, \beta) P(Y|X, \beta')  }{Q_{\lambda}^2(Y)}  \right]  
 =\frac{\lambda^{2n}}{ (2\lambda^2-1)^{n/2} }  \exp \left\{ t  Z^\top A  Z \right\},
 \label{eq:moment_exp_Y2}
\end{align}
where 
\begin{align*}
t = \frac{1}{2\sigma^2(2\lambda^2-1)}, \quad \text{ and }
A = \begin{bmatrix} 
2 & 1 & 1 \\
1 & 1-\lambda^2 & \lambda^2 \\
1 & \lambda^2 & 1-\lambda^2
\end{bmatrix}
\otimes \identity_{n} \in \mathbb{R}^{3n \times 3n},
\end{align*}
where by $A \otimes B $ we refer to the Kronecker product between two matrices $A$ and $B$.
Note that $Z$ is a zero-mean Gaussian vector with covariance matrix 
$$
V= \diag{s, k-s, k-s} \otimes \identity_n.
$$
Note that 
$$
AV = 
\left( \begin{bmatrix} 
2 & 1 & 1 \\
1 & 1-\lambda^2 & \lambda^2 \\
1 & \lambda^2 & 1-\lambda^2
\end{bmatrix} \diag{s, k-s, k-s}  \right) \otimes \identity_n.
$$
It is straightforward to find that the eigenvalues of $AV$ are $0$ of multiplicity $n$,
$k+s$ of multiplicity $n$, and $(k-s)(1-2\lambda^2)$ of multiplicity $n.$
Thus,
\begin{align}
\det(\identity_{3n} - 2 t A V ) = \left(1 - 2t (k+s) \right)^{n} \left(1 - 2t (k-s) (1-2\lambda^2) \right)^{n}.
\label{eq:moment_det}
\end{align}
It follows from \prettyref{eq:moment_exp_Y2} that 
\begin{align}
\Exp_X \Exp_{Y} \left[   \frac{P(Y|X, \beta) P(Y|X, \beta')  }{Q_{\lambda}^2(Y)}  \right] 
& =\frac{\lambda^{2n}}{ (2\lambda^2-1)^{n/2} }  \Exp_Z \left[  e^{ t Z^\top A Z }\right] \nonumber \\
& =\frac{\lambda^{2n}}{ (2\lambda^2-1)^{n/2} }  \frac{1}{\sqrt{\det(\identity_{3n} - 2 t A V ) }},\label{eq:moment_exp_XY}
\end{align}
where the last equality holds if $t< \frac{1}{2(k+s)}$ and 
follows from the expression of MGF of 
a quadratic form of normal random variables, see, \eg, \cite[Lemma 2]{baldessari1967distribution}. 

Combining \prettyref{eq:moment_det} and \prettyref{eq:moment_exp_XY}  yields that
if $t = \frac{1}{2\sigma^2(2\lambda^2-1)}< \frac{1}{2(k+s)}$,
\begin{align*}
\Exp_X \Exp_{Y} \left[   \frac{P(Y|X, \beta) P(Y|X, \beta')  }{Q_{\lambda}^2(Y)}  \right] 
& =\frac{\lambda^{2n}}{ (2\lambda^2-1)^{n/2} } \left(1 - \frac{k+s}{ \sigma^2 (2\lambda^2-1 )} \right)^{-n/2}
\left( 1 + \frac{k-s}{\sigma^2} \right)^{-n/2} \\
& = \lambda^{2n} \left( 2\lambda^2 -1 - \frac{k+s}{\sigma^2} \right)^{-n/2} \left( 1 + \frac{k-s}{\sigma^2} \right)^{-n/2}.
\end{align*}
Note that if 
$2\lambda^2 -1 > \frac{2k}{\sigma^2}$,
then 
$\frac{1}{2\sigma^2(2\lambda^2-1)}< \frac{1}{2(k+s)}$
for all $0 \le s \le k$.
It follows from \prettyref{eq:moment_fubini} that   
if $2\lambda^2 -1 > \frac{2k}{\sigma^2}$,
 then 
\begin{align*}
\Exp_{Q_{\lambda}} \left[ \left( \frac{ P }{ Q_{\lambda}} \right)^2 \right]
= \lambda^{2n} \Exp_{S \sim \Hyp(p,k,k)} \left[ \left( 2\lambda^2 -1 - \frac{k+S}{\sigma^2} \right)^{-n/2} \left( 1 + \frac{k-S}{\sigma^2} \right)^{-n/2} \right]. 
\end{align*}

\end{proof}

We establish also the following lemma. 

\begin{lemma}\label{lmm:MGF_boundMain}
Suppose $ k \le p^{\frac{1}{2}-\delta} $ for an arbitrarily small fixed constant $\delta \in (0,\frac{1}{2})$ and 
$\frac{k}{\sigma^2} \ge C$ for a sufficiently large constant $C$ only depending on $\delta$.
 If $n$ satisfies condition \prettyref{eq:n_condition_small}, then
\begin{align}
\Exp_{S \sim \Hyp(k,k,p)} \left[ \left( 1- \frac{S}{k+\sigma^2} \right)^{-n} \right ]
= 1+o_p(1). \label{eq:MGF_hyp_bound_1_Main}
\end{align}
\end{lemma}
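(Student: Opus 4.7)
The strategy is to expand the expectation via the hypergeometric mass function and bound the sum over $s \geq 1$ by $o(1)$; since $(1-S/(k+\sigma^2))^{-n} \geq 1$ almost surely, the expectation is at least $1$, so controlling the tail sum from above suffices. I will first apply the standard hypergeometric estimate $\Prob(S=s) \leq \binom{k}{s}(2k/p)^s$, which follows from $\binom{p-k}{k-s}/\binom{p}{k} \leq \binom{p}{k-s}/\binom{p}{k} \leq (k/(p-k))^s \leq (2k/p)^s$ and uses $k \leq p/2$. The resulting sum will be split at $s=k/2$, with different estimates of the factor $(1-s/(k+\sigma^2))^{-n}$ in the two ranges.

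In the small-overlap range $1 \leq s \leq k/2$, the inequality $s/(k+\sigma^2) \leq 1/2$ allows the tangent bound $-\log(1-x) \leq 2x$, so
\[
\pth{1-\frac{s}{k+\sigma^2}}^{-n} \leq \exp\pth{\frac{2ns}{k+\sigma^2}} \leq \pth{\frac{p}{k}}^{2s/\log(1+k/\sigma^2)},
\]
where the last inequality uses $n\log(1+k/\sigma^2)/k \leq \log(p/k)$ from the hypothesis. Choosing $C(\delta)$ large enough so that $\log(1+k/\sigma^2) \geq 2/\delta$ makes the exponent at most $s\delta$, so each summand is bounded by $\binom{k}{s}(2(k/p)^{1-\delta})^s$; summing the binomial series yields $\exp(2k(k/p)^{1-\delta}) - 1$, which under $k \leq p^{1/2-\delta}$ is $O(p^{-3\delta/2+\delta^2}) = o(1)$. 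In the large-overlap range $k/2 < s \leq k$, I instead invoke the global convex bound $-\log(1-s/(k+\sigma^2)) \leq (s/k)\log(1+k/\sigma^2)$; combined with the tightened hypothesis $n\log(1+k/\sigma^2)/k \leq \log(p/k) - \log\log(p/k)$ this gives $(1-s/(k+\sigma^2))^{-n} \leq ((p/k)/\log(p/k))^s$, so each summand is at most $\binom{k}{s}(2/\log(p/k))^s$. The key step is to apply the coarse uniform bound $\binom{k}{s} \leq 2^k$ and then sum the geometric tail, producing $2^k \sum_{s>k/2}(2/\log(p/k))^s = O((8/\log(p/k))^{k/2})$, which is $o(1)$ because $\log(p/k) \to \infty$ for $k \leq p^{1/2-\delta}$.

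The main obstacle I anticipate is the large-overlap range. The convex bound applied uniformly on $[1,k]$ yields only $\sum_s \binom{k}{s}(2/\log(p/k))^s = (1+2/\log(p/k))^k - 1$, which is bounded below by $\exp(2k/\log(p/k)) - 1$; this is not $o(1)$ whenever $k/\log(p/k) \not\to 0$, and that can happen freely for $k$ polynomial in $p$. The split at $s=k/2$ combined with the coarse estimate $\binom{k}{s} \leq 2^k$ on the upper half exploits the exponential decay of $(2/\log(p/k))^s$ to trade a factor of $\log(p/k)^{k/2}$ against $2^k$, producing the favorable $(8/\log(p/k))^{k/2}$. Symmetrically, the convex bound would be too crude on the lower half, so the sharper linear approximation of $-\log(1-x)$ is required there to pick up the extra factor $(k/p)^{s(1-\delta)}$ that keeps the binomial series summable.
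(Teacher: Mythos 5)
Your proof is correct, and I checked the key estimates: the hypergeometric bound $\Prob(S=s)\le \binom{k}{s}(2k/p)^s$, the tangent bound $-\log(1-x)\le 2x$ on $[0,1/2]$ together with $n\log(1+k/\sigma^2)/k\le \log(p/k)$ and $\log(1+k/\sigma^2)\ge 2/\delta$ in the range $s\le k/2$, and the chord bound $-\log\bigl(1-\tfrac{s}{k+\sigma^2}\bigr)\le \tfrac{s}{k}\log(1+k/\sigma^2)$ combined with the full hypothesis $n\log(1+k/\sigma^2)/k\le \log(p/k)-\log\log(p/k)$ in the range $s>k/2$, all go through, and the final bounds $O(p^{-3\delta/2+\delta^2})$ and $O\bigl((8/\log(p/k))^{k/2}\bigr)$ are indeed $o(1)$ under $k\le p^{1/2-\delta}$. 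The overall strategy is the same as the paper's (expand over the hypergeometric overlap, split by overlap size, linearize the logarithm for small overlap, and use the $n\lesssim n^*/2$ condition with the $\log\log$ correction to kill the near-full-overlap terms), but the route differs in the details: the paper proves this lemma by invoking Lemma~\ref{lmm:MGF_bound} and Lemma~\ref{lmm:MGF_large_regime}, which use a three-regime decomposition at $\epsilon k$ with $\epsilon=\tfrac{\log\log(p/k)}{2\log(p/k)}$ and at $\tau=k\bigl(1-1/\log^2(1+k/\sigma^2)\bigr)$, and handle each regime by convexity of the exponent $g$ (or $g_n$) with endpoint evaluation — a structure chosen so the same lemmata can be reused for the conditional second moment in Theorem~\ref{thm:main}. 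Your single split at $s=k/2$, with pointwise bounds (tangent below, chord above) and the coarse $\binom{k}{s}\le 2^k$ traded against the geometric factor $(2/\log(p/k))^s$, gives a shorter self-contained proof of this particular lemma; what it does not give is the quantified, regime-separated bounds (e.g.\ the explicit $e^{-\alpha k\log(p/k)}$ decay restricted to $S\ge \tau$) that the paper needs elsewhere, so it is a genuine simplification for the statement at hand but not a drop-in replacement for the appendix lemmata.
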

\begin{proof}
The lemma readily follows by combining  \prettyref{lmm:MGF_bound} and 
\prettyref{lmm:MGF_large_regime} with $\alpha=\frac{\log \log (p/k) }{\log (p/k)}$ and $c=p^{-1/2-\delta}$. 
\end{proof}

\begin{proof}[Proof of Theorem \ref{firstThm}]
Using Proposition \ref{chisq} for $\lambda=\lambda_0$ satisfying $\lambda_0^2 = k/\sigma^2 +1$ we have
\begin{align*}
\chi^2(P \| Q_{\lambda_0})=\Exp_{S \sim \Hyp(p,k,k)} 
\left[  \left( 1- \frac{S}{k+\sigma^2} \right)^{-n}  \right ]-1. 
\end{align*}Using now \prettyref{lmm:MGF_boundMain} we have $\chi^2(P \| Q_{\lambda_0})=o(1)$. The chain of inequalities (\ref{eq:TV_bound}) concludes the proof of  Theorem \ref{firstThm}.

\end{proof}

\subsection{Proof of Theorem \ref{thm:main}} \label{NegD2}

\begin{proof}[Proof of Theorem \ref{thm:main}] 

 For notational simplicity we denote in this proof the probability measure $Q_{\lambda_0}$ simply by $Q$ and the event 
 $\calE_{\gamma, \tau}$ by $\calE.$

We first show that \prettyref{eq:cond_moment_bounded}
implies $D(P_\calE\| Q) = o(1)$, $\mathrm{TV}(P_\calE,Q) = o(1)$, and
$
\mathrm{TV}(P,Q)=o(1).
$

It follows from  \prettyref{eq:TV_bound} that  
$D(P_\calE\| Q) = o(1)$ and $\mathrm{TV}(P_{\calE} ,Q )=o(1).$ 
Observe that under our choice of $\tau$ and $\gamma$,  \prettyref{lmm:conditioning} implies that 
\begin{align}
\prob{\calE^c} \le \exp \left( - \frac{n\gamma}{8} \right) = \exp \left( -  \frac{\alpha k \log (p/k) }{8} \right)
\le \exp \left(  -  4 k \log \log (p/k) \right) =o_p(1). 
\label{eq:cond_event_prob_bound}
\end{align}
Thus, in view of 
\prettyref{eq:DPQ_convex_UB}, we get that 
\begin{align*}
 \mathrm{TV}(P,Q)  & \le \left(1-\prob{\calE^c} \right) \mathrm{TV}(P_{\calE} ,Q ) + \prob{\calE^c} \mathrm{TV}(P_{\calE^c} ,Q ) \\
 & \le \mathrm{TV}(P_{\calE} ,Q ) + \prob{\calE^c} = o(1).
\end{align*}

Next we prove \prettyref{eq:cond_moment_bounded}.  
We first carry calculations for any $\lambda>\sqrt{k/\sigma^2+1/2}$; we then restrict to $\lambda=\sqrt{k/\sigma^2+1}$.
In view of \prettyref{eq:planted_cond}, we have 
$$
\frac{P_\calE(X,Y)}{Q(X,Y)} =  
\frac{1}{Q(Y) Q(X)} 
\Exp_{\beta} \left[ \frac{  P(X) P (Y |X, \beta) \indc{\calE} (X, \beta) }{ \prob{ \calE  } } \right] 
= \Exp_{\beta}  \left[ \frac{  P (Y |X, \beta)  \indc{\calE} (X, \beta) } { Q(Y)  \prob{   \calE  } }\right],
$$
where the last equality holds because $P(X)=Q(X)$. 
Hence 
$$
\left( \frac{P_\calE(X,Y)}{Q(X,Y) } \right)^2
= \Exp_{\beta \independent \beta'} 
\left[ \frac{ P(Y|X, \beta) P(Y|X, \beta') 
\indc{ \calE  } (X, \beta) \indc{ \calE  } (X, \beta')  } 
{  Q^2(Y) \mathbb{P}^2\left\{ \calE  \right\} } \right],
$$
where $\beta'$ is an independent copy of $\beta$. 
Recall $\prob{ \calE }=1-o(1)$.
Therefore, 
$$
\Exp_{Q} \left[ \left( \frac{ P_\calE }{ Q} \right)^2 \right] = \left( 1+o(1) \right)
\Exp_{\beta \independent \beta'} 
\Exp_{X} 
\left[ \Exp_{Y}  \left[ \frac{ P(Y|X, \beta) P(Y|X, \beta')  } { Q^2(Y)  } \right] 
\indc{ \calE  } (X, \beta) \indc{ \calE  } (X, \beta')  
\right].
$$
It follows from \prettyref{eq:moment_exp_Y} that 
\begin{align*}
& \Exp_{Y} \left[   \frac{P(Y|X, \beta) P(Y|X, \beta')  }{Q^2(Y)}  \right]\\
& = \frac{\lambda^{2n}}{ (2\lambda^2-1)^{n/2} } 
\exp \left\{ \frac{ \| X ( \beta + \beta' )\|^2 
 - \left(2 \lambda^2 -1 \right) \| X(\beta -\beta')\|^2}{4\sigma^2(2\lambda^2-1)} 
 \right\}.
\end{align*}
Combining the last two displayed equation yields that
\begin{align}
& \Exp_{Q} \left[ \left( \frac{ P_\calE }{ Q} \right)^2 \right]  \nonumber \\ 
& =  \frac{\left( 1+o(1) \right)\lambda^{2n}}{ (2\lambda^2-1)^{n/2} } 
\Exp_{\beta \independent \beta'} 
\Exp_{X} \left[ e^{ \frac{ \| X ( \beta + \beta' )\|^2 
 - \left(2 \lambda^2 -1 \right) \| X(\beta -\beta')\|^2}{4\sigma^2(2\lambda^2-1)} } 
\indc{ \calE  } (X, \beta) \indc{ \calE  } (X, \beta') 
 \right]. \label{eq:moment_cond}
\end{align}

Next we break the right hand side of \prettyref{eq:moment_cond} 
into two disjoint parts depending on whether $\iprod{\beta}{\beta'} \le \tau$. We prove that the part where $\iprod{\beta}{\beta'} \le \tau$ is $1+o(1)$ and the part where $\iprod{\beta}{\beta'} > \tau$ is $o(1)$. Combining them we conclude the desired result.

{\bf Part 1}: Note that

\begin{align}
& \Exp_{X} \left[  \exp \left\{ \frac{ \| X ( \beta + \beta' )\|^2 
- \left(2 \lambda^2 -1 \right) \| X(\beta -\beta')\|^2}{4\sigma^2(2\lambda^2-1)} 
 \right\} 
 \indc{ \calE  } (X, \beta) \indc{ \calE  } (X, \beta')  \right]
 \indc{ \iprod{\beta}{\beta'} \le \tau} \nonumber \\
 & \le \Exp_{X} \left[  \exp \left\{ \frac{ \| X ( \beta + \beta' )\|^2 
 - \left(2 \lambda^2 -1 \right) \| X(\beta -\beta')\|^2}{4\sigma^2(2\lambda^2-1)} 
 \right\} \right] \indc{  \iprod{\beta}{\beta'} \le \tau }.
 \label{eq:part_1_moment_1}
\end{align}
Since $\iprod{\beta+\beta'}{\beta-\beta'}=0$ and $X_{ij} \iiddistr \calN(0,1)$, conditional on $(\beta, \beta')$,  $\mathrm{Cov}(X(\beta+\beta'),X(\beta-\beta'))=0$ and therefore $X(\beta+\beta') \sim \calN( 0, 2(k+s) \identity_{n})$  is independent of $X(\beta-\beta') \sim \calN( 0, 2(k-s) \identity_n) $, for $s=\iprod{\beta}{\beta'}$.
Therefore, 
\begin{align}
& \Exp_{X} \left[  \exp \left\{ \frac{ \| X ( \beta + \beta' )\|^2 
 - \left(2 \lambda^2 -1 \right) \| X(\beta -\beta')\|^2}{4\sigma^2(2\lambda^2-1)} 
 \right\} \right] \nonumber \\
 & = \Exp_{X} \left[ \exp \left\{ \frac{ \| X ( \beta + \beta' )\|^2 }{4\sigma^2(2\lambda^2-1)} 
 \right\} \right]
 \Exp_{X} \left[ \exp \left\{ - \frac{  \| X(\beta -\beta')\|^2}{4\sigma^2} 
 \right\} \right] \nonumber \\
 &  =  
 \left( 1- \frac{  (k+s) }{\sigma^2 (2\lambda^2-1)} \right)^{-n/2}  \left( 1+ \frac{  (k-s) }{\sigma^2} \right)^{-n/2},
 \label{eq:part_1_moment_2}
\end{align}
where the last equality holds if $\lambda>\sqrt{(k+s)/(2\sigma^2)+1/2}$
and follows from the fact that $\mathbb{E}_{Z \sim \chi^2(1)}\left[e^{-tZ}\right]=\frac{1}{\sqrt{1+2t}}$ 
for $t>-1/2$. Combining \prettyref{eq:part_1_moment_1} and \prettyref{eq:part_1_moment_2} yields that if $\lambda>\sqrt{k/\sigma^2+1/2}$, then
\begin{align}
& \frac{\lambda^{2n}}{ (2\lambda^2-1)^{n/2} } \Exp_{\beta \independent \beta'}  \Exp_{X} 
\left[  e^{ \frac{ \| X ( \beta + \beta' )\|^2 
 - \left(2 \lambda^2 -1 \right) \| X(\beta -\beta')\|^2}{4\sigma^2(2\lambda^2-1)} } \indc{ \calE  } (X, \beta) \indc{ \calE  } (X, \beta')  \right] \indc{  \iprod{\beta}{\beta'} \le \tau } \nonumber \\
 & \le 
\frac{\lambda^{2n}}{ (2\lambda^2-1)^{n/2} } \Exp_{\beta \independent \beta'} 
 \left[ 
 \left( 1- \frac{  (k+s) }{\sigma^2 (2\lambda^2-1)} \right)^{-n/2}  \left( 1+ \frac{  (k-s) }{\sigma^2} \right)^{-n/2}
 \indc{  s \le \tau }
 \right], 
 \nonumber 
 \end{align}
 In particular, by plugging in $\lambda=\sqrt{k/\sigma^2+1}$, we get that 
 \begin{align}
 & \frac{\lambda^{2n}}{ (2\lambda^2-1)^{n/2} } \Exp_{\beta \independent \beta'}  \Exp_{X} 
\left[  e^{ \frac{ \| X ( \beta + \beta' )\|^2 
 - \left(2 \lambda^2 -1 \right) \| X(\beta -\beta')\|^2}{4\sigma^2(2\lambda^2-1)} } \indc{ \calE  } (X, \beta) \indc{ \calE  } (X, \beta')  \right] \indc{  \iprod{\beta}{\beta'} \le \tau } \nonumber \\
 & \overset{(a)}{\le} \left(\frac{k}{\sigma^2}+1\right)^{n} \Exp_{S \sim \mathrm{Hyp}{(p,k,k)}} \left\{  \left( 1+ \frac{  (k-S) }{\sigma^2} \right)^{-n}  \indc{S \le \tau } \right\} \nonumber \\
&=\Exp_{S \sim \mathrm{Hyp}{(p,k,k)}} \left\{  \left( 1- \frac{ S }{k+\sigma^2} \right)^{-n}  \indc{S \le \tau } \right\} ,
\end{align}
where $(a)$ holds by noticing that $s=\iprod{\beta}{\beta'}$ follows an Hypergeometric distribution with parameters $(p,k,k)$ as the dot product of two uniformly at random chosen binary $k$-sparse vectors.

Using Lemma \ref{lmm:MGF_bound} we conclude that under our assumptions,  there exists a constant $C>0$ depending only on $\delta>0$ such that if $k/\sigma^2 \geq C$ then 
$$ 
\Exp_{S \sim \mathrm{Hyp}{(p,k,k)}} \left\{  \left( 1- \frac{ S }{k+\sigma^2} \right)^{-n}  \indc{S \le \tau } \right\} =1+o(1).
$$ 
concluding the Part 1.  

\medskip
{\bf Part 2}: By the definiton of $\calE$, since $\tau \le s=\iprod{\beta}{\beta'} \le k$, 
$$
\| X ( \beta + \beta' )\|^2 \leq \mathbb{E}_X[\| X ( \beta + \beta' )\|^2](2+\gamma)=2n(k+s)(2+\gamma)
\le 4n k (2+\gamma). 
$$
Therefore,
\begin{align}
& \Exp_{X} \left[  \exp \left\{ \frac{\| X ( \beta + \beta' )\|^2  -  \left(2 \lambda^2 -1 \right) \norm{X(\beta-\beta')}^2 }{4\sigma^2(2\lambda^2-1)}  \right\} 
\indc{ \calE  } (X, \beta) \indc{ \calE  } (X, \beta')  \right]
\indc{ \inner{\beta,\beta'} > \tau} \nonumber  \\
& \le   \Exp_{X} \left[ \exp \left\{ \frac{4nk(2+\gamma) -  \left(2 \lambda^2 -1 \right) \norm{X(\beta-\beta')}^2}{4\sigma^2(2\lambda^2-1)}  \right\}  \right]
\indc{ \inner{\beta,\beta'}> \tau} \nonumber \\
& =  \exp \left\{ \frac{nk(2+\gamma) }{\sigma^2(2\lambda^2-1)}  \right\} 
\left( 1+ \frac{  (k-s) }{\sigma^2} \right)^{-n/2}
\indc{ \inner{\beta,\beta'} > \tau},  \label{eq:condition_key_step}
\end{align}
where the first inequality follows from the definition of event $\calE$ and the last equality holds due to \prettyref{eq:part_1_moment_2}.
It follows that
\begin{align}
& \frac{\lambda^{2n}}{ (2\lambda^2-1)^{n/2} } \Exp_{\beta \independent \beta'}\left[ \Exp_{X} \left[  e^{ \frac{\| X ( \beta + \beta' )\|^2  -  \left(2 \lambda^2 -1 \right) \norm{X(\beta-\beta')}^2 }{4\sigma^2(2\lambda^2-1)} }
\indc{ \calE  } (X, \beta) \indc{ \calE  } (X, \beta')  \right]
\indc{ \inner{\beta,\beta'} > \tau} \right]  \nonumber \\
& \le \frac{\lambda^{2n}}{ (2\lambda^2-1)^{n/2} } 
 \exp \left\{ \frac{nk(2+\gamma) }{\sigma^2(2\lambda^2-1)} \right\}
\Exp_{S \sim \mathrm{Hyp}{(p,k,k)}} \left[ 
\left( 1+ \frac{  (k-S) }{\sigma^2} \right)^{-n/2} \indc{ S > \tau}  \right] \nonumber \\
& \overset{(a)}{\le} \lambda^n
e^{ n(1+\gamma/2 )}
\Exp_{S \sim \mathrm{Hyp}{(p,k,k)}} \left[ 
\left( 1+ \frac{  (k-S) }{\sigma^2} \right)^{-n/2} \indc{ S > \tau}  \right] \nonumber \\
& \overset{(b)}{=} e^{ n(1+\gamma/2 )} \Exp_{S \sim \mathrm{Hyp}{(p,k,k)}} \left[ 
\left( 1-\frac{  S }{k+\sigma^2} \right)^{-n/2} \indc{ S > \tau}  \right],
\label{compli}
\end{align}
where $(a)$ follows due to $2\lambda^2-1 \ge \lambda^2$ and $2\lambda^2-1 \ge 2 k/\sigma^2$; 
$(b)$ follows by plugging in $\lambda^2=k/\sigma^2+1$. 

Recall that $n \le (1-\alpha) n^*$. Then under our choice of $\alpha$ and $\tau$, 
applying \prettyref{lmm:MGF_large_regime} with $n$ being replaced by $n/2$, 
$c= p^{-1/2-\delta}$, 
we get that there exits a universal constant $C>0$ such that  
if $k/\sigma^2 \geq C$ then 
\begin{align*}
& e^{ n(1+\gamma/2 )} \Exp_{S \sim \mathrm{Hyp}{(p,k,k)}} \left[ 
\left( 1-\frac{  S }{k+\sigma^2} \right)^{-n/2} \indc{ S > \tau}  \right] \\
& \le \exp \left( - \alpha k \log \frac{p}{k} + \log \frac{2- c }{1-c  } + n \left( 1+ \frac{\gamma}{2}  \right) \right) \\
& \overset{(a)}{=} \exp \left( - \frac{1}{4} \alpha k \log \frac{p}{k} + \log \frac{2- c}{1-c }  \right) \\
& \overset{(b)}{\le} \exp \left( - 8 k \log \log \frac{p}{k} + \log \frac{2- c }{1-c  }  \right) =o_p(1)
\end{align*}
where $(a)$ follows because under our choice of $\gamma$ and $\alpha$,
$$
n\left( 1+ \frac{\gamma}{2}  \right) \le n + \frac{1}{2} \alpha k \log \frac{p}{k} \le n^* + \frac{1}{2} \alpha k \log \frac{p}{k} 
\le  \frac{3}{4} \alpha k \log \frac{p}{k};
$$ 
$(b)$ holds due to $\alpha k \log (p/k) \ge 32 k \log \log (p/k)$. 

Combing the bounds for Parts 1 and 2, 
we conclude $$
\chi^2(P_{\mathcal{E}} \| Q)=\Exp_{Q}\left[ \left( \frac{P_{\mathcal{E}}}{Q}\right)^2 \right]-1=o(1),
$$ 
as desired.

\end{proof}

\section{Proof of Negative Results for Recovery}
\label{sec:negative_recovery}

\label{NegR}

\subsection{Lower Bound on MSE} 
Our first result provides a connection between the relative entropy $D(P\|Q_\lambda)$ and the MSE of an estimator that depends only a subset of the observations. This bound is general in the sense that it holds for any distribution on $\beta$ with $\expect{ \|\beta\|^2}  = k$. 
For ease of notation, we write $Q_\lambda$ as $Q$ whenever the context is clear.  

\begin{lemma}\label{lem:MSE_LB}
Given an integer $n \ge 2$ and an integer $m \in \{1,\dots,n-1\}$, let $\hat{\beta}$ be an estimator that is a function of $X$ and  the first $m$ observations $(Y_1, \dots, Y_m)$. Then,  
\begin{align}
\MSE \left( \hat{\beta} \right) \ge    e^{- \frac{2}{n-m} D(P||Q)  }  (\sigma^2 + k)  - \sigma^2.
\end{align}
\end{lemma}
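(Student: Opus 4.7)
The plan is to reduce to bounding the MMSE $\mathrm{MMSE}_m := \expect{\|\beta - \expect{\beta \mid Y_1^m, X}\|^2}$ (since $\MSE(\hat\beta) \ge \mathrm{MMSE}_m$ for any estimator $\hat\beta$ based on $(X, Y_1^m)$), and then to sandwich the conditional mutual information $I(\beta; Y_{m+1}^n \mid Y_1^m, X)$ between a lower bound coming from $D(P\|Q)$ and an upper bound coming from the Gaussian channel capacity, in the spirit of the area-theorem heuristic sketched just after the main theorem. Throughout I take $Q = Q_{\lambda_0}$ with $\lambda_0^2 = 1 + k/\sigma^2$, the value for which $\expect{YY^\top}$ matches across the planted and null models and for which the identity below becomes an equality.

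For the lower sandwich, I would first establish the identity
\begin{align*}
I(\beta; Y_1^n \mid X) + D(P \| Q) = \frac{n}{2}\log\!\left(1 + \frac{k}{\sigma^2}\right).
\end{align*}
Because $P(X) = Q(X)$, one has $\log\frac{dP}{dQ} = \log\frac{P(Y_1^n \mid X, \beta)}{Q(Y_1^n)} - \log\frac{P(Y_1^n \mid X, \beta)}{P(Y_1^n \mid X)}$; taking expectation under $P$, the right-hand side becomes $\expect{\log\frac{P(Y_1^n \mid X, \beta)}{Q(Y_1^n)}} - I(\beta; Y_1^n \mid X)$, and the remaining expectation evaluates to $\frac{n}{2}\log(1+k/\sigma^2)$ by a direct Gaussian calculation using $\lambda_0^2 = 1 + k/\sigma^2$ and the assumption $\expect{\|\beta\|^2} = k$. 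The same argument applied to only the first $m$ observations, together with nonnegativity of KL divergence, yields $I(\beta; Y_1^m \mid X) \le \frac{m}{2}\log(1+k/\sigma^2)$. Subtracting via the chain rule $I(\beta; Y_1^n \mid X) = I(\beta; Y_1^m \mid X) + I(\beta; Y_{m+1}^n \mid Y_1^m, X)$ gives
\begin{align*}
I(\beta; Y_{m+1}^n \mid Y_1^m, X) \ge \frac{n-m}{2}\log\!\left(1 + \frac{k}{\sigma^2}\right) - D(P\|Q).
\end{align*}

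For the upper sandwich, I would apply Gaussian entropy-maximization term by term. For each $i > m$, since $Y_i \mid (Y_1^{i-1}, X, \beta) \sim \calN(\iprod{X_i}{\beta}, \sigma^2)$, we have
\begin{align*}
I(Y_i; \beta \mid Y_1^{i-1}, X) \le \tfrac{1}{2}\log\!\left(1 + \tfrac{1}{\sigma^2}\expect{\var(\iprod{X_i}{\beta} \mid Y_1^{i-1}, X)}\right).
\end{align*}
The key step is to identify $\expect{\var(\iprod{X_i}{\beta} \mid Y_1^{i-1}, X)}$ with $\mathrm{MMSE}_{i-1}$. Since $\beta$ is independent of $X$ and $Y_1^{i-1}$ depends only on $(X_1, \ldots, X_{i-1}, W_1, \ldots, W_{i-1}, \beta)$, the posterior covariance $\Sigma_{i-1} := \cov(\beta \mid Y_1^{i-1}, X)$ is a function of $(Y_1^{i-1}, X_1, \ldots, X_{i-1})$ only and thus independent of $X_i \sim \calN(0, \identity_p)$; evaluating $\expect{X_i^\top \Sigma_{i-1} X_i} = \expect{\mathrm{tr}(\Sigma_{i-1})}$ by $\expect{X_i X_i^\top} = \identity_p$ gives the claim. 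Monotonicity of the MMSE in the number of observations yields $\mathrm{MMSE}_{i-1} \le \mathrm{MMSE}_m$ for every $i > m$, so summing over $i \in \{m+1, \ldots, n\}$ produces $I(\beta; Y_{m+1}^n \mid Y_1^m, X) \le \frac{n-m}{2}\log(1 + \mathrm{MMSE}_m/\sigma^2)$.

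Chaining the two sandwich bounds, exponentiating, and rearranging produces $\mathrm{MMSE}_m \ge (\sigma^2 + k)\, e^{-\frac{2}{n-m} D(P\|Q)} - \sigma^2$, which is the lemma. The main subtle step is the conditional-variance identification in the upper sandwich: one must carefully track that the posterior covariance of $\beta$ given $(Y_1^{i-1}, X)$ does not depend on the rows $X_i, \ldots, X_n$, which is what allows the quadratic form $X_i^\top \Sigma_{i-1} X_i$ to decouple in expectation and collapse to $\expect{\mathrm{tr}(\Sigma_{i-1})} = \mathrm{MMSE}_{i-1}$.
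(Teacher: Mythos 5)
Your proof is correct and follows essentially the same route as the paper: lower-bound $I(\beta; Y_{m+1}^n \mid Y_1^m, X)$ by $\frac{n-m}{2}\log(1+k/\sigma^2) - D(P\|Q)$ using the KL identity, upper-bound it by $\frac{n-m}{2}\log\left(1+\mathrm{MMSE}_m/\sigma^2\right)$ via Gaussian maximum entropy, and rearrange. The only (harmless) differences are that you specialize to $\lambda=\lambda_0$, where your identity is exact, while the paper's proof covers general $Q_\lambda$ through the inequality $\log u + 1/u - 1 \ge 0$, and your per-step chain-rule/posterior-covariance bookkeeping is a more detailed version of the paper's one-line bound $\expect{\|Y_{m+1} - \expect{Y_{m+1}\mid X, Y_1^m}\|^2} \le \MSE(\hat{\beta}) + \sigma^2$.
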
 
\begin{proof} The conditional mutual information  $I(\beta ; Y \mid X)$ can be rewritten as
\begin{align}
I( \beta ; Y \mid X) 
& = \expects{ \log \frac{P(Y|X, \beta)}{ P(Y|X )}  }{(\beta, X, Y) \sim P} \nonumber \\
& = \expects{ \log \frac{P(Y|X, \beta)}{ Q(Y)}  }{(\beta, X, Y) \sim P} +  \expects{ \log \frac{Q(Y)}{ P(Y|X)}  }{(X, Y) \sim P}, \nonumber
\end{align}
where $(\beta, X, Y) \sim P$ denotes that $(\beta, X, Y)$ are generated according to the planted model.
Plugging in the expression of $P(Y|X, \beta)$ and $Q(Y)$, we get that 
$$
\expects{ \log \frac{P(Y|X, \beta)}{ Q(Y)}  }{(\beta, X, Y) \sim P}
=\frac{n}{2} \log(\lambda^2) + \frac{1}{2} \expect{ \frac{\|Y\|_2^2}{\lambda^2 \sigma^2} - \frac{\| Y- X \beta\|_2^2}{\sigma^2} }.
$$
Furthermore, by definition, 
$$
\expects{ \log \frac{Q(Y)}{ P(Y|X)}  }{(X, Y) \sim P}
= - D(P\|Q) 
$$
Combining the last three displayed equations gives that 
\begin{align}
I( \beta ; Y \mid X) 
& =\frac{n}{2} \log(\lambda^2) + \frac{1}{2} \expect{ \frac{\|Y\|_2^2}{\lambda^2 \sigma^2} - \frac{\| Y- X \beta\|_2^2}{\sigma^2} } - D(P\|Q)  \notag\\
& =\frac{n}{2} \left[\log\left(\frac{\lambda^2}{ 1+ k/\sigma^2}\right) +    \frac{ 1 + k/\sigma^2}{\lambda^2 } - 1 \right] + \frac{n}{2} \log(1   +k / \sigma^2)  - D(P\|Q)  \notag\\
& \ge \frac{n}{2} \log(1   +k / \sigma^2)   - D(P \|Q ). \label{eq:MSE_LB_a}
\end{align}
where the inequality follows from the fact that $\log(u)  + 1/u - 1 \ge 0$ for all $u >0$.

To proceed, we will now provide an upper bound on $I(\beta;  Y \mid X)$ in terms of the MSE. Starting with the chain rule for mutual information, we have
\begin{align}
I(\beta;  Y \mid X) & = I(\beta;  Y_1^m \mid X )  + I(\beta; Y_{m+1}^n \mid X, Y_1^m), \label{eq:MSE_LB_b}
\end{align}
where we have used the shorthand notation $Y_i^j = (Y_i, \dots, Y_j)$. Next, we use the fact that mutual information in the Gaussian channel under a second moment constraint is maximized by the Gaussian input distribution. Hence, 
\begin{align}
I(\beta;  Y_1^m \mid X ) & \le  \sum_{i=1}^m I( \beta; Y_i \mid X) \notag \\
& \le  \frac{m}{2} \expect{ \log \left(   \expect{ \|Y_1 \|^2 \mid X} /\sigma^2 \right)} \notag \\
& \le \frac{m}{2} \log \left(   \expect{ \|Y_1\|^2 } /\sigma^2 \right)  \notag \\
& \le \frac{m}{2} \log \left( 1+ k/\sigma^2 \right),  \label{eq:MSE_LB_d}
\end{align}
and 
\begin{align}
I(\beta; Y_{ m+1}^n \mid X, Y_1^m ) & \le \sum_{i=m+1}^n I ( \beta ; Y_i \mid X, Y_1^m ) \notag  \\
& \le \frac{n-m}{2} \log \left(  \expect{  \| Y_{m+1} - \expect{ Y_{m+1} \mid X, Y_1^m } \|^2 }  /\sigma^2 \right) \notag \\
& \le \frac{n-m}{2} \log \left( 1+ \MSE(\hat{\beta})/\sigma^2 \right),  \label{eq:MSE_LB_e}
\end{align}
where the last inequality holds due to 
\begin{align*}
\expect{ \|Y_{m+1} - \expect{ Y_{m+1} \mid X, Y_1^n} \|^2}  
& = \expect{\left \| \beta  - \expect{\beta \mid Y_1^m , X} \right\|^2} + \sigma^2 
\le \MSE(\hat{\beta}) + \sigma^2. 
\end{align*}
Plugging inequalities \eqref{eq:MSE_LB_d} and \eqref{eq:MSE_LB_e} back into  \eqref{eq:MSE_LB_b} leads to 
\begin{align}
I(\beta;  Y \mid X)  & \le \frac{m}{2} \log(1 + k/\sigma^2)  +\frac{n-m}{2} \log(1 + \MSE(\hat{\beta}) / \sigma^2 ). \label{eq:MSE_LB_f}
\end{align}
Comparing \eqref{eq:MSE_LB_f} with \eqref{eq:MSE_LB_a} and rearranging terms gives the stated result. 
\end{proof}

\subsection{Upper Bound on Relative Entropy via Conditioning}

We now show how a conditioning argument can be used to upper bound the relative entropy. 
Recall that \prettyref{eq:DPQ_convex_UB} implies
\begin{align}
D(P ||Q) \le (1- \eps)  D( P_{\calE} ||Q)  + \eps D( P_{\calE^c} ||Q) . \label{eq:KL_convex_UB}
\end{align}
The next result provides an upper bound on the second term on the right-hand side. 

\begin{lemma}\label{lmm:DEc_UB}
For any $\calE \subset \reals^p \times \reals^{n \times p}$ we have
\begin{align*}
\eps  D( P_{\calE^c} ||Q)  \le  2 \sqrt{ \eps}   + \frac{\eps n}{ 2} \log( \lambda^2)
 +\frac{\sqrt{  \eps} \,  n ( 1 + k/\sigma^2) }{  \lambda^2} ,
\end{align*}
where $\eps = \prob{(X,\beta) \in \calE^c}$. In particular, if $\lambda^2 = 1 + k/\sigma^2$, then 
\begin{align*}
\eps  D( P_{\calE^c} ||Q)  \le   \frac{\eps n}{ 2} \log( 1 + k/\sigma^2)
 +\sqrt{  \eps}  ( 2 + n).
\end{align*}
\end{lemma}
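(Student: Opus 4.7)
The plan is to pass to the enlarged sample space $(X,Y,\beta)$ so that the conditioning on $(X,\beta)\in\calE^c$ becomes a pointwise indicator rather than a density correction. Writing $\pi$ for the uniform prior over binary $k$-sparse vectors, define
\[
\tilde{P}_{\calE^c}(X,Y,\beta) = \frac{\pi(\beta)\, P(X,Y\mid\beta)\, \indc{(X,\beta)\in\calE^c}}{\eps}, \qquad \tilde{Q}(X,Y,\beta) = Q(X,Y)\,\pi(\beta).
\]
Since their $(X,Y)$-marginals are $P_{\calE^c}$ and $Q$ respectively, the data-processing inequality gives $D(P_{\calE^c}\|Q) \le D(\tilde{P}_{\calE^c}\|\tilde{Q})$. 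Expanding the right-hand side, using $P(X)=Q(X)$ to cancel the $X$-marginals, and writing out the Gaussian log-likelihood ratio explicitly, one obtains
\[
\eps\, D(\tilde{P}_{\calE^c}\|\tilde{Q}) = \frac{\eps\, n\log\lambda^2}{2} + \frac{\expect{\|Y\|^2 \indc{(X,\beta)\in\calE^c}}}{2\lambda^2\sigma^2} - \frac{\expect{\|Y-X\beta\|^2 \indc{(X,\beta)\in\calE^c}}}{2\sigma^2} - \eps\log\eps,
\]
where all expectations are under the planted model.

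Three simple ingredients then finish the job. First, the $-\|Y-X\beta\|^2/(2\sigma^2)$ term is non-positive and is dropped. Second, Cauchy--Schwarz gives $\expect{\|Y\|^2 \indc{\calE^c}} \le \sqrt{\eps\, \expect{\|Y\|^4}}$, and under the planted model the rows of $Y$ are conditionally i.i.d.\ $\calN(0,k+\sigma^2)$ given $\beta$, so $\|Y\|^2/(k+\sigma^2) \sim \chi^2_n$ and $\expect{\|Y\|^4} = (k+\sigma^2)^2 n(n+2)$. Using $\sqrt{n(n+2)} \le n+1 \le 2n$ for $n\ge 1$, this contribution is at most $\sqrt{\eps}\, n(1+k/\sigma^2)/\lambda^2$. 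Third, the elementary inequality $-\eps\log\eps \le 2\sqrt{\eps}$ on $(0,1]$ controls the final term. Summing these three contributions yields the first displayed bound; setting $\lambda^2 = 1+k/\sigma^2$ turns $(1+k/\sigma^2)/\lambda^2$ into $1$ and collapses $\sqrt{\eps}\, n + 2\sqrt{\eps}$ into $\sqrt{\eps}(n+2)$.

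The only non-mechanical step is the data-processing lift: working directly with $P_{\calE^c}(X,Y) = \eps^{-1}\,\mathbb{E}_\beta[P(X,Y\mid\beta)\indc{\calE^c}]$ would force one to bound the logarithm of a $\beta$-integral, and Jensen's inequality points the wrong way. Augmenting with $\beta$ replaces that log-integral by the explicit Gaussian log-likelihood ratio, at the price of a stand-alone $-\eps\log\eps$ term which is exactly of the order $\sqrt{\eps}$ we can afford. Everything else is a short chi-squared second-moment computation.
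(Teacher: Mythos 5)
Your proof is correct, and it reproduces the paper's bound exactly; the skeleton is the same as the paper's -- reduce to $\eps D(P_{\calE^c}\|Q)\le \eps\log(1/\eps)+\tfrac{\eps n}{2}\log\lambda^2+\tfrac{1}{2\lambda^2\sigma^2}\Exp\bigl[\|Y\|^2\indc{\calE^c}\bigr]$, then finish with $\eps\log(1/\eps)\le 2\sqrt{\eps}$, Cauchy--Schwarz, and the chi-squared fourth moment $\Exp[\|Y\|^4]=(k+\sigma^2)^2 n(n+2)$ -- but you handle the mixture over $\beta$ inside the logarithm differently. The paper bounds the conditional density pointwise, $P(Y\mid X,\beta)\le(2\pi\sigma^2)^{-n/2}$, \emph{before} taking the log, which gives $P_{\calE^c}(X,Y)\le P(X)/\bigl(\eps\,(2\pi\sigma^2)^{n/2}\bigr)$ and sidesteps the log-of-a-$\beta$-average directly. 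You instead lift to the joint law of $(X,Y,\beta)$ and invoke data processing (equivalently, the chain rule for relative entropy), which turns that log-average into the explicit Gaussian log-likelihood ratio, and only afterwards drop the nonpositive $-\|Y-X\beta\|^2/(2\sigma^2)$ term. The two devices are equivalent in effect here, since both amount to discarding the same Gaussian exponent, but your lift is the more structural route (it would, for instance, allow one to retain the quadratic term for a marginally sharper bound), while the paper's pointwise sup on the Gaussian density is the more elementary one. Your constant bookkeeping ($\sqrt{n(n+2)}\le 2n$ versus the paper's $n+2\le 3n$) lands on the same stated inequalities, including the specialization $\lambda^2=1+k/\sigma^2$.
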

\begin{proof}
Starting with the definition of the conditioned planted model in \prettyref{eq:planted_cond}, we have
\begin{align*}
P_{\calE^c}(X, Y) 
& = \frac{ \Exp_{\beta} \left[  P( X, Y  \mid \beta) \indc{\calE^c}(X, \beta)   \right]} {\prob{ \calE^c  }} 
=   \frac{ P(X)  \Exp_{\beta} \left[P( Y  \mid  X, \beta) \indc{\calE^c}(X, \beta) \right]   } {\epsilon } 
\end{align*}
Recall that $W_{ij} \iiddistr \calN(0,\sigma^2)$. It follows that  $ P( Y \mid  \beta, X) \le (2 \pi  \sigma^2)^{-n/2}$  and thus
\begin{align*}
P_{\calE}(X, Y) & \le \frac{  P(X) \Exp_{\beta} \left[   \indc{\calE} (\beta, X)    \right]}{\eps (2 \pi  \sigma^2)^{n/2}} \le \frac{  P(X)}{\eps (2 \pi  \sigma^2)^{n/2}}.
\end{align*}
Therefore, recalling that  $Q(X,Y) = P(X) Q(Y)$, we have
\begin{align*}
D( P_{\calE^c} ||Q) 
& = \Exp_{ P_{\calE^c}} \left[ \log \frac{ P_{\calE^c}(X, Y)}{ P(X)Q(Y)} \right]\\
&  \le    \Exp_{ P_{\calE^c}} \left[ \log \frac{ 1 }{ \eps\, ( 2 \pi \sigma^2)^{n/2}  Q(Y)} \right]  \\
&  = \log\frac{1}{\eps} +  \frac{n}{2} \log(  \lambda^2) + \frac{\expect{ \| Y\|^2  \mid  (X,\beta) \in \calE^c } }{2 \lambda^2 \sigma^2}   
\end{align*}
Multiplying both sides by $\eps$ leads to 
\begin{align*}
\eps  D( P_{\calE^c} ||Q) 
& \le   \eps  \log\frac{1}{ \eps} +    \frac{\eps\, n}{2} \log(  \lambda^2)  + \frac{\Exp \left[  \| Y\|^2 \indc{\calE^c }(\beta, X ) \right] }{2 \lambda^2 \sigma^2} 
\end{align*}
The first term on the right-hand side satisfies $\eps \log (1/\eps)  \le 2 \sqrt{\eps}$. Furthermore, by the Cauchy-Schwarz inequality, 
\begin{align*}
\expect{  \| Y\|^2 \indc{\calE^c}(\beta, X ) } & \le  \sqrt{\expect{ \indc{\calE^c}(X,\beta)} \,  \expect{ \|Y\|^4}} =  \sqrt{ \eps n (2 + n) } ( k + \sigma^2),
\end{align*}
where we have used the fact that $\|Y\|^2/(k + \sigma^2)$ has a chi-squared distribution with $n$ degrees of freedom. Combining the above displays and using the inequality $n+2 \le 3 n$ leads to the stated result. 
\end{proof}

\subsection{Proof of \prettyref{thm:recovery}}
We are ready to prove \prettyref{thm:recovery}.
\begin{proof}[Proof of \prettyref{thm:recovery}]
First, we prove \prettyref{eq:KL_bounded} under the theorem assumptions.
Let $\calE$ be $\calE_{\gamma,\tau}$ with $\gamma$ and $\tau$ given in \prettyref{thm:main}.
It follows from \prettyref{thm:main} that 
$
D(P_\calE \|Q_{\lambda_0})=o_p(1).
$
Moreover, it follows from  \prettyref{lmm:conditioning} and $k=o\left(p\right)$ that
$$
\eps = \prob{ \calE^c} \le e^{- 4 k \log \log (p/k) }. 
$$
Thus we get from \prettyref{lmm:DEc_UB} that for $\lambda^2=k/\sigma^2+1$ and
\begin{align*}
 \eps  D( P_{\calE^c} ||Q_{\lambda_0})  & \le   \frac{\eps n}{ 2} \log \left( 1 + k/\sigma^2 \right)+\sqrt{ \eps} \, (2 +  n)   \\
& \le \frac{\eps n^*}{ 2} \log \left( 1 + k/\sigma^2 \right)+\sqrt{ \eps} \, (2 +  n^* )   \\
& \le  e^{- 4 k \log \log (p/k) } \left(  k \log \frac{p}{k} \right) +  2e^{- 2 k \log \log (p/k) } \left( 1 + \frac{k\log (p/k)}{\log (1+k/\sigma^2)} \right) 
=o_p(1),
\end{align*}
where the last equality holds due to $k=o(p)$ and $k/\sigma^2 \ge C$ for a sufficiently large constant $C$. 
In view of the upper bound in \prettyref{eq:KL_convex_UB}, we immediately get
$
D(P\|Q_{\lambda_0})=o_p(1)
$
as  desired. 

Next we prove \prettyref{eq:MSE_bound}. 
Note that  if $ \lfloor (1- \alpha)n^* \rfloor \le 1$, then  \prettyref{eq:MSE_bound} is trivially true. 
Hence, we assume $ \lfloor (1- \alpha)n^* \rfloor \ge 2$ in the following. 
Applying  Lemma~\ref{lem:MSE_LB} with $n= \lfloor (1- \alpha)n^* \rfloor$ and $m =\lfloor (1 -\alpha ) n^*\rfloor -1$ yields that 
\begin{align}
\frac{\MSE(\hat{\beta})}{ k} 
\ge \left( 1 + \frac{ \sigma^2}{ k } \right) \exp\left\{ - 2D(P||Q_{\lambda_0}) \right\}  - \frac{ \sigma^2}{k} = 1-o_p\left(1\right).
\end{align}
where the last equality holds because $D(P||Q_{\lambda_0}) = o_p(1)$ and $k/\sigma^2 \ge C$ for a constant $C$.
\end{proof}

\section{Proof of Positive Results for Recovery and Detection}\label{sec:positive}

In this section we state and prove the positive result.  

\subsection{Proof of Theorem \ref{thm:positive}}\label{PosR}

Towards proving  \prettyref{thm:positive},  we need the following lemma.

\begin{lemma}\label{chilem} 
Let $X \in \mathbb{R}^{n \times p}$ with i.i.d.\  $\mathcal{N}(0,1)$ entries and $W \sim N(0, \sigma^2 I_n)$.
Furthermore, assume that $\beta, \beta' \in \{0,1\}^p$ are two $k$-sparse vectors with $\|\beta  -\beta'\|^2 = 2 \ell$ for some $\ell \in \{1,\dots, k\}$. 
Then
$$\prob{ \| W + X (\beta - \beta') \|^2 \le \| W  \|^2 } \leq \left( 1 + \frac{ \ell}{ 2\sigma^2} \right)^{-n/2} .$$
\end{lemma}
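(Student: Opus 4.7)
The plan is to apply a Chernoff (exponential Markov) bound with a well-chosen parameter and exploit the independence of $X$ and $W$. Set $v=\beta-\beta'$ so that $\|v\|^2 = 2\ell$ (since $\beta,\beta'\in\{0,1\}^p$ differ in exactly $2\ell$ coordinates). Then $Z \triangleq Xv$ is a linear combination of the columns of $X$ with coefficient vector $v$, so $Z \sim \mathcal{N}(0, 2\ell\, I_n)$, and $Z$ is independent of $W$. Expanding,
\[
\|W+Z\|^2 - \|W\|^2 \;=\; 2\langle W,Z\rangle + \|Z\|^2,
\]
so the event of interest is $\{2\langle W,Z\rangle + \|Z\|^2 \le 0\}$.

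Next, I would apply Chernoff: for every $t>0$,
\[
\prob{\|W+Z\|^2 \le \|W\|^2} \;\le\; \expect{\exp\bigl(-2t\langle W,Z\rangle - t\|Z\|^2\bigr)}.
\]
Conditioning on $Z$, the inner expectation over $W \sim \mathcal{N}(0,\sigma^2 I_n)$ involves only the linear term $-2t\langle W,Z\rangle$, which is centered Gaussian with variance $4t^2\sigma^2\|Z\|^2$. Hence the conditional MGF equals $\exp\!\bigl(2t^2\sigma^2\|Z\|^2\bigr)$, and the bound becomes
\[
\expect{\exp\!\bigl((2t^2\sigma^2 - t)\|Z\|^2\bigr)}.
\]
Since $\|Z\|^2 / (2\ell) \sim \chi_n^2$, the standard chi-squared MGF (valid when the exponent coefficient is below $1/(4\ell)$) yields
\[
\expect{\exp\!\bigl((2t^2\sigma^2 - t)\|Z\|^2\bigr)} \;=\; \bigl(1 + 4\ell t - 8\ell\sigma^2 t^2\bigr)^{-n/2}.
\]

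Finally, I would optimize over $t>0$. The quadratic $4\ell t - 8\ell\sigma^2 t^2$ is maximized at $t^\star = 1/(4\sigma^2)$, where it equals $\ell/(2\sigma^2)$; one also checks $2(t^\star)^2\sigma^2 - t^\star = -1/(8\sigma^2) < 1/(4\ell)$, so the MGF step was legitimate. Substituting gives
\[
\prob{\|W+Z\|^2 \le \|W\|^2} \;\le\; \left(1 + \frac{\ell}{2\sigma^2}\right)^{-n/2},
\]
which is the claim.

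I do not anticipate a serious obstacle: the only delicate point is keeping track of the range of $t$ for which the chi-squared MGF is finite, and checking that the optimizer $t^\star = 1/(4\sigma^2)$ lies in the admissible region. Both are straightforward. The key conceptual move is decoupling $W$ from $Z$ by handling the Gaussian linear term via conditioning first, and then reducing the remaining $\|Z\|^2$ to a chi-squared MGF.
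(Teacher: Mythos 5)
Your proof is correct and takes essentially the same route as the paper: at your optimizer $t^\star = 1/(4\sigma^2)$ the Chernoff bound collapses to $\mathbb{E}\left[\exp\left(-\|X(\beta-\beta')\|^2/(8\sigma^2)\right)\right]$, which is exactly the intermediate quantity the paper reaches by writing the probability as $\mathbb{E}\left[Q\left(\|X(\beta-\beta')\|/(2\sigma)\right)\right]$ and applying the Gaussian tail bound $Q(x)\le e^{-x^2/2}$, after which both arguments finish with the same chi-squared MGF computation. The only cosmetic difference is that you carry a free exponential-tilting parameter and optimize at the end, whereas the paper conditions on $X$, evaluates the probability over $W$ exactly, and then invokes the tail bound.
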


\begin{proof}Let $Q(x)$ be the complementary cumulative distribution function of the standard Gaussian distribution, that is for any $x \in \mathbb{R}$, $Q(x)=\mathbb{P} \left[Z \geq x \right]$ for $Z \sim \mathcal{N}(0,1)$. The Chernoff bound gives $Q(x) \leq e^{-x^2/2}$ for all $x \ge 0$. Then 
\begin{align*}
& \prob{ \| W + X (\beta - \beta') \|^2 \le \| W  \|^2 } \\
& = \prob{ 2  W^T X(\beta - \beta')  + \| X (\beta - \beta') \|^2 \le  0  } \\
& = \prob{ \frac{  -   W^T X(\beta - \beta') }{\sigma  \| X(\beta - \beta')\| }  \ge  \frac{\| X (\beta - \beta')\|}{ 2 \sigma}    } \\
& \overset{(a)}{=} \expect{Q\left( \frac{\| X (\beta - \beta')\|}{ 2 \sigma}   \right)   }  \\
& \overset{(b)}{\le}  \expect{\exp\left( - \frac{\| X (\beta - \beta')\|^2}{ 8 \sigma^2}   \right)   } \\
& \le \left( 1 + \frac{ \ell}{ 2\sigma^2} \right)^{-n/2},
\end{align*} 
where $(a)$ holds because conditioning on $X$, 
$\frac{  -   W^T X(\beta - \beta') }{\sigma  \| X(\beta - \beta')\| }  \sim \mathcal{N}(0,1)$;
$(b)$ holds due to $Q(x) \leq e^{-x^2/2}$;
 the last inequality follows from $\| X (\beta - \beta')\|_2^2/ (2\ell) \sim  \chi^2(n)$ 
 and $\mathbb{E}_{Z \sim \chi^2(1)}\left[e^{-tZ}\right]=\frac{1}{\sqrt{1+2t}}$ 
for $t>0$.
\end{proof}

We now proceed with the proof of  \prettyref{thm:positive}.
\begin{proof}[Proof of \prettyref{thm:positive}]
First, note that when $k=o(p)$, \prettyref{eq:strong_recovery_positive} readily follows from \prettyref{eq:recovery_positive}. 
In particular, observe that since $\hat{\beta},\beta \in \{0,1\}^p$ are binary $k$-sparse vectors, 
it follows that $\|\hat{\beta}-\beta\|^2 \leq 2k$ and therefore
\begin{align*}
\frac{1}{k} \MSE \left( \hat{\beta}\right)&= \frac{1}{k} \expect{ \| \hat{\beta}-\beta\|^2}  \\
&\leq \frac{2}{\log \left(p/k\right)}+2\mathbb{P}\left[\|\hat{\beta}-\beta\|^2 \geq \frac{2k}{\log \left(p/k \right)} \right]\\
& \leq \frac{2}{\log \left(p/k\right)}+\frac{2e^2}{\log^2 \left(p/k\right) \left(1-e^{-1}\right)},
\end{align*}
which is $o_p\left(1\right)$ when $k=o\left(p\right).$

It remains to prove \prettyref{eq:recovery_positive}. Set for convenience
\begin{align}
d \triangleq  \left\lceil  \frac{k}{\log (p/k)} \right\rceil. \label{eq:def_d}
\end{align}
By the definition of the MLE, 
$$
\| W + X \left(\beta -  \hat{\beta} \right) \|^2=\|Y-X \hat{\beta}  \|^2
\leq \|Y-X\beta\|^2=\|W\|^2.
$$ 
Hence,
$$
\left\{ \|\hat{ \beta} - \beta\|^2 \ge 2d \right\} = 
\cup_{\ell=d}^{k} \left\{ \exists \beta' \in \{0,1\}^p: \| \beta'\|_0=k, 
\| \beta' - \beta\|^2 =2 \ell, \| W + X (\beta - \beta') \|^2 \le \| W  \|^2
\right\}. 
$$
By a union bound and Lemma \ref{chilem}, we have that
\begin{align}\label{UB}
\prob{ \| \hat{ \beta} - \beta\|^2 \ge 2d } &  \le \sum_{\ell =d }^k \binom{k}{\ell}  \binom{p-k}{\ell} \left( 1 + \frac{ \ell}{ 2 \sigma^2} \right)^{-n/2} \notag \\
& 
\overset{(a)}{\le}  \sum_{\ell =d }^k \left( \frac{ke}{\ell} \right)^\ell \left( \frac{pe}{\ell} \right)^\ell \left( 1 + \frac{ \ell}{ 2 \sigma^2} \right)^{-n/2} \notag  \\
& \overset{(b)}{\le} \sum_{\ell =d }^k \left( \frac{e^2 pk}{d^2} \right)^\ell \left( 1 + \frac{ \ell}{ 2 \sigma^2} \right)^{-n/2}   \notag  \\
& \triangleq  \sum_{\ell =d }^k \exp \left( h(\ell) -\ell \right), 
\end{align}
where $(a)$ holds due to  $\binom{m_1}{m_2} \leq (em_1/m_2)^{m_2}$; $(b)$ holds due to $ \ell \geq d$; and
$$
h(x) \triangleq   - \frac{n}{2} \log \left( 1 + \frac{ x}{ 2 \sigma^2} \right)  + x \log \left( \frac{e^3 pk}{d^2} \right) .
$$
Note that $h(x)$ is convex in $x$; hence the maximum of $h(\ell)$
for $\ell \in [d, k]$ is achieved at either $\ell=d$ or $\ell=k$, \ie,
\begin{align}
\max_{d \le \ell \le k}h(\ell) \le \max \left\{ h(d), h(k) \right\}. \label{eq:max_h_ell}
\end{align}
We proceed to upper bound $h(d)$ and $h(k)$. 
Note that 
\begin{align} \label{eq:F1}
\left( 1+ \frac{\log 2}{ \log \left( 1+  k/(2\sigma^2) \right)} \right)
\log \left( 1 + \frac{ k}{ 2 \sigma^2} \right) \ge \log \left(1+ k/\sigma^2 \right).
\end{align}
Thus, it follows from \prettyref{eq:recovery_cond} that
\begin{align}
n \ge  \frac{  \log \left(1+ k/\sigma^2 \right)}{\log \left( 1 + \frac{ k}{ 2 \sigma^2} \right) }  \left( 1 + \frac{4 \log \log (p/k)}{\log (p/k)} \right) n^*
= \frac{2k \log (p/k)}{ \log \left( 1 + \frac{ k}{ 2 \sigma^2} \right)}  \left( 1 + \frac{4 \log \log (p/k)}{\log (p/k)} \right). \label{eq:F2}
\end{align}
Then we conclude that 
\begin{align} \label{F3}
h(k)&=- \frac{n}{2} \log \left( 1 + \frac{ k}{ 2 \sigma^2} \right)  + k \log \left( \frac{e^3 pk}{d^2} \right) \notag \\
& \overset{\prettyref{eq:F2}}{\le} - k \log (p/k) -4k \log \log (p/k) +   k \log \left( \frac{e^3 pk}{d^2} \right) \notag \\
& \overset{\prettyref{eq:def_d}}{\le}    - k \log (p/k) -4k \log \log (p/k) 
+k \log \left( \frac{e^3 pk \log^2 (p/k)}{k^2} \right) \notag \\
&= - 2k \log \log (p/k) + 3k.
\end{align}

Analogously, we can upper bound $h(d)$ as follows:
\begin{align}\label{Hd}
h(d) &=- \frac{n}{2} \log \left( 1 + \frac{ d}{ 2 \sigma^2} \right)  
+ d \log \left( \frac{e^3 pk}{d^2} \right)  \notag \\
& \overset{\prettyref{eq:F2}}{\le} - \left( 1+ \frac{4 \log \log (p/k)}{\log (p/k)} \right)  \frac{k \log (p/k) }{ \log \left(1+ k/(2\sigma^2) \right)}
\log \left( 1 + \frac{ d}{ 2 \sigma^2} \right) + d \log \left( \frac{e^3 pk}{d^2} \right).
\end{align}
Let 
$$
q(x) \triangleq \log \left( 1 + \frac{ x}{ 2 \sigma^2} \right) - \frac{x}{k} \log \left(1+ \frac{k}{2\sigma^2} \right) 
$$
Note that $q(x)$ is concave in $x$, $q(0)=0$, and $q(k)=0$. Thus 
$$\min_{ 0 \le x \le k} q(x) \ge \min \left\{ q(0), q(k) \right\} \ge 0.$$
Hence, $q(d) \ge 0 $, \ie,
$$
 k \log \left( 1 + \frac{ d }{ 2 \sigma^2} \right)  \ge d \log \left(1+ \frac{k}{2\sigma^2} \right) .
$$
Combining the last displayed equation with (\ref{Hd}) gives that 
\begin{align*}
h(d) & \le - \left( 1+ \frac{4 \log \log (p/k)}{\log (p/k)} \right)  d \log (p/k)  + d \log \left( \frac{e^3 pk}{d^2} \right)\\
& \overset{\prettyref{eq:def_d}}{\le} - d \log (p/k) - 4d \log \log (p/k)  
 + d \log \left( \frac{e^3 pk \log^2 (p/k)}{k^2} \right) \\
 & \le   - 2d \log \log (p/k) +  3d. 
\end{align*}
Combining the last displayed equation with (\ref{F3}) and \prettyref{eq:max_h_ell}, we get that 
$$
\max_{d \le \ell \le k}h(\ell)  \leq - 2d \log \log (p/k) + 3d.
$$
Combining the last displayed equation with (\ref{UB}) yields that 
\begin{align*}
\prob{ \| \hat{ \beta} - \beta\|^2 \ge 2d }
& \le e^{-2d \log \log (p/k)+3d} \sum_{\ell=d}^k e^{-\ell}  \\
& \le e^{-2 d \log \log (p/k)+3d} \frac{e^{-d}}{1-e^{-1}}\\
& \le e^{-2\log \log (p/k)} \frac{e^2}{1-e^{-1}} \\
&= \frac{e^2}{(1-e^{-1}) \log^2 (p/k)},
\end{align*} 
where the last inequality holds under the assumption $\log \log (p/k) \ge 1$. 
This completes the proof of \prettyref{thm:positive}.

\end{proof}

\subsection{Proof of Theorem \ref{thm:posDetection}} \label{posdet}

\begin{proof}
Under the planted model, we have
\[
\calT(X,Y) \le \frac{\| W\|^2}{ \| W + X \beta\|^2}. 
\]
Note that $\|W\|^2/\sigma^2 \sim \chi^2 (n)$ and 
$\| W + X \beta\|^2/ (k+\sigma^2) \sim \chi^2(n)$.  
It follows from the concentration inequality for chi-square distributions that
$$
\prob{ \|W\|^2 \ge \sigma^2 \left( n + 2\sqrt{nt} + 2 t \right) } \le e^{-t},
$$
and 
$$
\prob{ \|W + X \beta \|^2 \le(k+ \sigma^2) \left( n - 2\sqrt{nt}  \right) } \le e^{-t}.
$$
Therefore, for any $t_n$ such that $t_n \to +\infty$ as $n \to +\infty$,
$$
P\left(\calT(X,Y) \ge \frac{\sigma^2}{ k + \sigma^2}  \frac{ n + 2\sqrt{nt_n} + 2t_n}{n - 2 \sqrt{n t_n} } \right) \to 0. 
$$In particular, using for example $t_n=\log n=o\left(n\right)$ we have  $\frac{ n + 2\sqrt{nt_n} + 2t_n}{n - 2 \sqrt{n t_n} }=1+o\left(1\right)$, we can easily conclude from the definition of $\tau$ that
$$
P\left(\calT(X,Y) \ge \tau \right) \to 0. 
$$
Meanwhile, under the the null model, we have 
\begin{align*}
\calT(X,Y) = \frac{\min_{\beta' \in \{0,1\}^p, \| \beta'\|_0 =k } \| \lambda W - X \beta\|^2}{ \| \lambda W \|^2}.
\end{align*}
Note that  $W$ and $X$ are independent; thus we condition on $X$ in the sequel. We have
\begin{align}
& \expect{ \min_{\beta' \in \{0,1\}^p, \| \beta'\|_0 =k } \| \lambda W - X \beta\|^2  } \nonumber \\
&\ge \min_{z_1, \dots, z_M \in \reals^n}  \expect{ \min_{m \in [M] } \| \lambda W - z_m\|^2} \nonumber \\
&\ge  \expect{\| \lambda W\|^2} M^{-2/n} \nonumber \\
& =n \lambda^2 \sigma^2 M^{-2/n},  \label{eq:mean_squared_bound}
\end{align}
where $M = \binom{p}{k}$ and the last inequality holds because the distortion rate function $D(R) = \sigma^2 \exp( 2 R) $ provides a non-asymptotic lower bound on the distortion of an i.i.d.\ $\calN(0, \sigma^2)$ source with rate $R = \frac{1}{n} \log M$ 
(See \eg~\cite[Section 10.3.2]{cover2006elements}).

Define $f: \reals^n \to \reals$, 
$$
f(w) =  \min_{\beta' \in \{0,1\}^p, \| \beta'\|_0 =k } \|  \lambda w - X \beta\| 
$$
It follows that $f$ is $\lambda$-Lipschitz and thus in view of the Gaussian concentration inequality for Lipschitz functions (see, \eg~\cite[Theorem 5.6]{boucheron2013concentration}),
we get that 
\begin{align}
\prob{ \left| f(W) -  \expect{f(W)   }  \right | \ge t   } \le 2 \exp \left( - \frac{t^2}{2 \lambda^2\sigma^2}  \right).
\label{eq:gaussian_lips_concent}
\end{align}
Thus
\begin{align*}
\var \left( f(W) \right) & = \expect{ \left( f (W)  - \expect{f(W)   } \right)^2 } \\
&= \int_{0}^\infty  \prob{ \left(  f(W) -  \expect{f(W)  }  \right)^2  \ge t  } d t  \\
& \le \int_{0}^\infty  2 \exp \left( - \frac{t}{2 \lambda^2\sigma^2}  \right) d t  \\
& =4 \lambda^2 \sigma^2. 
\end{align*}
Combining the last displayed equation with \prettyref{eq:mean_squared_bound} gives that 
$$
\expect{ f(W) } \ge \sqrt{ \expect{ f^2(W) } - 4 \lambda^2 \sigma^2 } \ge \lambda \sigma \sqrt{ n M^{-2/n} -4 }.
$$
Combining the last displayed equation with \prettyref{eq:gaussian_lips_concent}, we get that for any
$t_n$ such that $t_n \to +\infty$ as $n \to +\infty$, 
$$
\prob{ f(W) \le \lambda \sigma \sqrt{ n M^{-2/n} -4 } - \lambda \sigma t_n }  \to 0.
$$
Also, it follows from the concentration inequality for chi-square distributions that
$$
\prob{ \|W  \|^2 \ge   \sigma^2 \left( n + 2\sqrt{nt_n}  + 2t_n \right) } \to 0. 
$$
Thus, recalling that $T(X,Y)= f^2(W)/ \| \lambda W\|^2$, we get that
\begin{equation}\label{Qbound}
Q \left( T(X, Y)  \le \frac{  \left( \sqrt{ n M^{-2/n} -4 } - t_n \right)^2  }{ \left( n + 2\sqrt{nt_n}  + 2t_n \right) } \right) \to 0. 
\end{equation}

By assumption \prettyref{eq:ass_detection_cond2}, there exists a positive constant $\alpha>0$ such that
$$
n \ge  \frac{2 \log M}{ \log \left(1+k/\sigma^2 \right) + \log (1- \alpha) }.
$$
It follows that 
$$
M^{2/n} \le  (1-\alpha)\left( 1+ k/\sigma^2 \right)  .
$$
Since 
$$
\tau =\frac{1}{(1-\alpha/2)\left( 1+ k/\sigma^2 \right) }
$$
we have
$$
 \tau < \frac{1}{(1-\alpha)\left( 1+ k/\sigma^2 \right) } \leq M^{-2/n}.
$$
By assumption \prettyref{eq:ass_detection_cond}, $n M^{-2/n} \to +\infty$. Hence,
there exists a sequence of $t_n$ such that 
$t_n \to +\infty$ and $t_n=o(\sqrt{n} M^{-1/n}).$ 
In particular, for this choice of $t_n$, combining the above we have 
$$\liminf_n \frac{  \left( \sqrt{ n M^{-2/n} -4 } - t_n \right)^2  }{ \left( n + 2\sqrt{nt_n}  + 2t_n \right) }>\tau.$$Hence from (\ref{Qbound}) we can conclude 
$$Q \left( T(X, Y)  \le \tau \right) \to 0. $$

Hence indeed,
$$
P\left( T(X, Y) \ge \tau
 \right)
 + Q \left( T(X, Y) \le \tau \right) \to 0,
$$
which shows that $T(X,Y)$ with threshold $\tau$ indeed achieves the strong detection. 

\end{proof}

\section{Conclusion and Future Work}\label{sec:open}

In this paper, we establish an \textit{All-or-Nothing} information-theoretic phase transition 
for recovering a $k$-sparse vector $\beta \in \{0,1\}^p$ from $n$ independent  linear 
Gaussian measurements  $Y=X \beta +W$ 
with noise variance $\sigma^2$. In particular, we show that 
the MMSE normalized
by the trivial MSE jumps from $1$ to $0$ at a critical sample size  $n^*=\frac{2k \log \left(p/k\right) }{\log \left(1+k/\sigma^2\right)}$ within a small window of size $\epsilon n^*$. The constant $\epsilon>0$ can be made arbitrarily small by increasing the signal-to-noise ratio $k/\sigma^2$.
Interestingly, the phase transition threshold $n^*$ is asymptotically equal to the ratio of entropy $H(\beta)$ and the AWGN channel capacity $\frac{1}{2} \log \left(1+k/\sigma^2\right)$. 
Towards establishing this All-or-Northing phase transition, we also study 
a closely related hypothesis testing problem, where the goal is to distinguish this planted model $P$ from a null model $Q_{\lambda}$ where $\left(X,Y\right)$ are independently generated and $Y_i \iiddistr \calN\left(0,\lambda^2 \sigma^2\right)$. When $\lambda = \lambda_0=\sqrt{k/\sigma^2+1}$, we show that the sum of Type-I and Type-II testing errors also jumps from $1$ to $0$ at $n^*$ within a small window of size $\epsilon n^*$.  

Our impossibility results for $n \le (1-\epsilon)n^*$ apply under a crucial assumption that 
$k \le p^{1/2-\delta}$ for some arbitrarily small but fixed constant $\delta>0$.
This naturally implies for $\Omega\left(p^{1/2}\right)\le k \le o\left(p\right)$, two open problems for the identification of the detection and the recovery thresholds, respectively. 

For detection, as argued in
\prettyref{app:assumption_needed}, $k=o\left(p^{1/2}\right)$ is needed for $n^*$ being the detection threshold, because weak detection is achieved for all $n=\Omega\left(n^*\right)$ when $k=\Omega(p^{1/2})$, that is the weak detection threshold becomes $o\left(n^*\right)$. The identification of the precise  detection threshold when $\Omega(p^{1/2}) \le k \le o\left(p\right)$ is an interesting open problem. 

For recovery, however, we believe that the recovery threshold  still equals $n^*$ when $\Omega \left(p^{1/2} \right) \le k \le o(p)$. To prove this, we propose to study the detection problem where both the (conditional) mean and the covariance are matched between the planted and null models. Specifically, let us consider a slightly modified
null model $Q$ with the matched conditional mean 
$\Expect_Q \left[ Y |X \right] = \Expect_P \left[ Y |X \right]  = \frac{k}{p} X \mathbf{1}$ and the matched covariance 
$\Expect_Q \left[YY^\top \right] = \Expect_P \left[ YY^\top \right] $, where $\mathbf{1}$ denotes the all-one vector. 
For example, if $X,W$ are defined as before and $Y\triangleq  \frac{k}{p} X \mathbf{1} +\lambda W$ 
with $\lambda$ equal to $\sqrt{\frac{k}{\sigma^2} + 1-\frac{k^2}{p}}$, then both the mean and covariance constraints are satisfied.
It is an open problem whether this new null model is indistinguishable from the planted model $P$ when $n \le \left(1-\epsilon\right)n^*$ and $\Omega \left(p^{1/2} \right) \le k \le o(p)$. If the answer is affirmative, then 
we may follow the analysis road map in this paper to further establish the impossibility of recovery. 

Finally, another interesting question for future work is to understand the extent to which the All-or-Nothing phenomenon applies beyond the binary vectors setting
or the Gaussian assumptions on $(X,W)$.  In this direction, some recent work~\cite{reeves:2017c} has shown that under mild conditions on the distribution of $\beta$, the distance between the planted and null models can be bounded in term of ``exponential moments'' similar to the ones studied in \prettyref{app:exponential_moment}.

\section*{Acknowledgment} 
G.~Reeves is supported by the NSF Grants CCF-1718494 and CCF-1750362. J.~Xu is supported by the NSF Grants CCF-1755960 and IIS-1838124.

\newpage 
\begin{appendices}

\section{Hypergeometric distribution and exponential moment bound }
\label{app:exponential_moment}

Throughout this subsection, we fix 
\begin{equation}\label{DfnTau}
\lambda^2 = k/\sigma^2 +1, \quad \text{ and } \quad
\tau=k\left(1-\frac{1}{\log^2 \lambda^2}\right).
\end{equation}
The main focus of this subsection is to give tight characterization of the following ``exponential'' moment:
$$
\Exp_{S \sim \mathrm{Hyp}(p,k,k)} \left[ \left( 1- \frac{S}{k+\sigma^2} \right)^{-n} \indc{S \in [a,b]} \right].
$$
for a given interval $[a,b]$. It turns out this ``exponential'' moment exhibit quantitatively different
behavior in the following three different regimes of overlap $S$: small regime ($s \le \epsilon k$), intermediate regime
($ \epsilon k <  s \le \tau$), and large regime ($s \ge \tau$), where $\epsilon$ is given in \prettyref{eq:epsilon_overlap_def}.

In the sequel, we first prove \prettyref{lmm:MGF_bound}, which focuses on the small and intermediate regimes under the
assumption $n \le n^*$. Then we prove \prettyref{lmm:MGF_large_regime}, which focuses on the large regime under the
assumption $n \le (1-\alpha) n^*/2$ for $\alpha \in (0,1/2)$.

We start with a simple lemma, bounding the probability mass of an hypergeometric distribution. 
\begin{lemma}\label{lmm:Hyp_bound}
Let $p,k \in \mathbb{N}$.
Then for $S \sim \mathrm{Hyp}(p,k,k)$ and any $s \in [k]$,
$$
\mathbb{P}\left( S=s\right) \leq \binom{k}{s}\left(\frac{k}{p-k+1}\right)^s.
$$ 
\end{lemma}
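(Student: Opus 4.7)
\textbf{Proof plan for \prettyref{lmm:Hyp_bound}.} The plan is to simply write out the hypergeometric probability mass function in its standard ratio-of-binomials form and then bound it by elementary factorial manipulations. Recall that for $S \sim \mathrm{Hyp}(p,k,k)$ we have
\[
\mathbb{P}(S=s) = \frac{\binom{k}{s}\binom{p-k}{k-s}}{\binom{p}{k}},
\]
so the $\binom{k}{s}$ factor on the right-hand side of the claim is exactly the combinatorial prefactor, and the task reduces to establishing
\[
\frac{\binom{p-k}{k-s}}{\binom{p}{k}} \le \left(\frac{k}{p-k+1}\right)^s.
\]

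First, I would expand both binomials as falling factorials. Writing
\[
\binom{p-k}{k-s} = \frac{(p-k)(p-k-1)\cdots(p-2k+s+1)}{(k-s)!} \quad \text{and} \quad \binom{p}{k} = \frac{p(p-1)\cdots(p-k+1)}{k!},
\]
the ratio becomes
\[
\frac{\binom{p-k}{k-s}}{\binom{p}{k}} = \frac{k!}{(k-s)!}\cdot \frac{(p-k)(p-k-1)\cdots(p-2k+s+1)}{p(p-1)\cdots(p-k+1)}.
\]
The numerator falling factorial $(p-k)(p-k-1)\cdots(p-2k+s+1)$ has exactly $k-s$ terms. I would then split the $k$ descending factors in the denominator into the first $k-s$ factors $p(p-1)\cdots(p-k+s+1)$ and the remaining $s$ factors $(p-k+s)(p-k+s-1)\cdots(p-k+1)$.

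The first $k-s$ factors of the denominator term-by-term dominate the $k-s$ factors in the numerator (since $p-j \ge p-k-j$), so that ratio is bounded by $1$. This leaves
\[
\frac{\binom{p-k}{k-s}}{\binom{p}{k}} \le \frac{k!/(k-s)!}{(p-k+s)(p-k+s-1)\cdots(p-k+1)}.
\]
Finally, the numerator $k!/(k-s)! = k(k-1)\cdots(k-s+1)$ is at most $k^s$, while every factor in the remaining $s$-term product in the denominator is at least $p-k+1$, so the denominator is at least $(p-k+1)^s$. Combining these gives the desired bound $(k/(p-k+1))^s$ and concludes the proof.

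I do not expect any obstacle here; the lemma is a clean inequality obtained by carefully pairing terms of falling factorials and bounding each pair by the worst case. The only place one needs to be slightly careful is to check that the factor counts match up ($k-s$ terms cancel against $k-s$ terms, leaving $s$ terms to bound), which is easily done from the explicit expressions above.
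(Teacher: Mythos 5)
Your proposal is correct and follows essentially the same route as the paper: the paper first bounds $\binom{p-k}{k-s}\le\binom{p}{k-s}$ and then uses $k!/(k-s)!\le k^s$ together with $(p-k+s)!/(p-k)!\ge (p-k+1)^s$, which is exactly what your term-by-term pairing of the falling factorials accomplishes in unpacked form. No gap; the factor counts you check are the only delicate point and they match.
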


\begin{proof}
 We have 
\begin{align*}
\mathbb{P}\left( S=s \right)
&= \binom{k}{s} \frac{\binom{p-k}{k-s}}{\binom{p}{k}} \leq \binom{k}{s}  \frac{\binom{p}{k-s}}{\binom{p}{k}} = \binom{k}{s} \frac{(p-k)! (k)! }{(p-k+s)!(k-s)!} \leq \binom{k}{s} \left(\frac{k}{p-k+1} \right)^s.
\end{align*}
\end{proof}

Next, we upper bound the ``exponential'' moment in the small overlap regime ($s \le \epsilon k$), and the intermediate overlap regime ($ \epsilon k <  s \le \tau$).  
\begin{lemma}\label{lmm:MGF_bound}
Suppose $n \leq n^*$. 
\begin{itemize}
\item If $ k \le p^{\frac{1}{2}-\delta} $ for an arbitrarily small but fixed constant $\delta \in (0,\frac{1}{2})$ and $k/\sigma^2 \ge C(\delta)$ for a sufficiently large constant $C(\delta)$ only depending on $\delta$, then for any $0 \le \epsilon \le 1/2$, 
\begin{align}
\Exp_{S \sim \mathrm{Hyp}(p,k,k)} \left[ \left( 1- \frac{S}{k+\sigma^2} \right)^{-n} 
\indc{S \le \epsilon k}  \right ]
= 1+o_p(1), \label{eq:MGF_hyp_bound_2}
\end{align}
\item If  $k=o(p)$ and $k/\sigma^2 \ge C$ for a sufficiently large universal constant $C$, then for 
\begin{align}
\epsilon=\epsilon_{k,p}
=\frac{ \log \log (p/k) }{ 2 \log (p/k)}, \label{eq:epsilon_overlap_def}
\end{align}
it holds that 
\begin{align}
\Exp_{S \sim \mathrm{Hyp}(p,k,k)} \left[ \left( 1- \frac{S}{k+\sigma^2} \right)^{-n} 
\indc{ \epsilon k < S \le \tau }  \right ]
= o_p(1), \label{eq:MGF_hyp_bound_3}
\end{align}
\end{itemize}
\end{lemma}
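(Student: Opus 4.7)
The plan is to apply Lemma~\ref{lmm:Hyp_bound} to bound $\mathbb{P}(S=s) \le \binom{k}{s}(k/(p-k+1))^s$, and then analyze the resulting truncated sums separately in each regime.

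For Part 1 (small regime $S \le \epsilon k$), I would split the sum at $s=0$. The $s=0$ term contributes $\mathbb{P}(S=0)$, and since $\Exp[S] = k^2/p \le p^{-2\delta}$ under $k \le p^{1/2-\delta}$, Markov's inequality gives $\mathbb{P}(S \ge 1) \le p^{-2\delta} = o(1)$, which already supplies the lower bound $1-o(1)$ for the expectation. For the matching upper bound, I use $\binom{k}{s} \le k^s/s!$ together with $(1 - s/(k+\sigma^2))^{-n} \le \exp(2ns/(k+\sigma^2))$ (valid because $s \le \epsilon k \le k/2$ forces $s/(k+\sigma^2) \le 1/2$), so that the nonzero terms collapse into
\[
\sum_{s \ge 1} \mathbb{P}(S=s)\left(1-\frac{s}{k+\sigma^2}\right)^{-n} \;\le\; \sum_{s \ge 1} \frac{B^s}{s!} \;=\; e^B - 1, \qquad B := \frac{k^2}{p-k+1}\,\exp\!\left(\frac{2n}{k+\sigma^2}\right).
\]
Writing $r = k/\sigma^2$ and invoking $n \le n^*$, one has $2n/(k+\sigma^2) \le 4r\log(p/k)/((r+1)\log(1+r))$; for $r \ge C(\delta)$ with $C(\delta)$ chosen so that $\log C(\delta) > (1+2\delta)/\delta$, the base-$p$ exponent in the resulting bound on $B$ is strictly negative, giving $B = o(1)$ and hence $e^B - 1 = o(1)$.

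For Part 2 (intermediate regime $\epsilon k < S \le \tau$), since there are at most $k$ summands it suffices to show each summand is $\exp(-\omega(\log k))$. Using the entropy bound $\binom{k}{s} \le e^{k h_2(s/k)}$ (with $h_2$ the natural-log binary entropy as in Section~\ref{sec:comparison}) together with $n \le n^* = 2k\log(p/k)/\log(1+r)$, each summand is bounded by $\exp(k u(s/k))$, where $u(t) \le h_2(t) + \log(p/k)\,\phi(t)$ with
\[
\phi(t) \;:=\; -t + \frac{2}{\log(1+r)}\bigl(-\log(1 - tr/(r+1))\bigr).
\]
The crux is to verify $\sup_{t \in [\epsilon, \tau/k]} \phi(t) \le -\epsilon(1-o(1))$. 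Note that $\phi$ is convex with $\phi(0)=0$, that $\phi'(t)$ vanishes at a unique $t^* = (r+1)/r - 2/\log(1+r) \approx 1 - 2/\log r$, and that the second zero of $\phi$ sits at $t_0 \approx 1 - 1/\sqrt{r+1}$; the calibrated value $\tau/k = 1 - 1/\log^2(1+r)$ lies in $(t^*, t_0)$ once $r$ is large, and a direct substitution gives $\phi(\tau/k) = -1 + O(\log\log(1+r)/\log(1+r))$, uniformly bounded below zero for $r \ge C$. Since $\phi$ decreases on $[\epsilon, t^*]$ and increases on $[t^*, \tau/k]$, the maximum over $[\epsilon, \tau/k]$ is attained at an endpoint; Taylor expanding at $0$ yields $\phi(\epsilon) = -\epsilon(1-o(1))$, which exceeds $\phi(\tau/k)$ for small $\epsilon$, so $\phi(\epsilon)$ is the maximum. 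Combining with $h_2 \le \log 2$ and $\epsilon\log(p/k) = \tfrac{1}{2}\log\log(p/k)$ gives $u(t) \le \log 2 - \tfrac{1}{2}\log\log(p/k)(1-o(1))$; summing over the at-most-$k$ summands yields $o(1)$, since $k = o(p)$ forces $\log\log(p/k) \to \infty$.

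The main obstacle is the control of $\phi$ near the right endpoint $\tau/k$, where $s$ is close to $k$ and $(1 - s/(k+\sigma^2))^{-n}$ can be very large. The specific calibration $\tau/k = 1 - 1/\log^2(1+r)$ in the definition of $\tau$ (rather than just some fraction of $k$) is precisely what keeps $\phi(\tau/k)$ bounded well below zero for $r$ large, so that the uniform supremum of $\phi$ on $[\epsilon, \tau/k]$ is dictated by the small boundary value $\phi(\epsilon)$ rather than by the potentially problematic interior behavior near $\tau/k$.
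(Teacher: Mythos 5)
Your proposal is correct and follows essentially the same route as the paper: both parts rest on the pointwise bound $\Prob(S=s)\le\binom{k}{s}\bigl(k/(p-k+1)\bigr)^s$ from Lemma~\ref{lmm:Hyp_bound}, the small-overlap sum is killed by making the per-unit-overlap exponent negative using $k\le p^{1/2-\delta}$ and $k/\sigma^2\ge C(\delta)$, and the intermediate regime is handled exactly as in the paper, since your $\phi(s/k)$ is just $g(s)/(k\log(p/k))$ and you use the same convexity-plus-endpoint evaluation at $\epsilon k$ and $\tau$ with the same role for the calibration $\tau=k(1-1/\log^2(1+k/\sigma^2))$. The only cosmetic difference is in Part 1, where you sum $B^s/s!$ into $e^B-1$ via $-\log(1-x)\le 2x$ instead of the paper's geometric series with $\binom{k}{s}\le k^s$; both yield the same conclusion under the same hypotheses.
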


\begin{proof}

Using Lemma \ref{lmm:Hyp_bound},
$$
\Exp_{S \sim \mathrm{Hyp}(p,k,k)} \left[ \left( 1- \frac{S}{k+\sigma^2} \right)^{-n}  \indc{S \le \tau} \right ]
= \prob{S=0} + \sum_{s=1}^{\floor{\tau}} \binom{k}{s} \left( \frac{k}{p-k+1} \right)^s e^{- n \log \left( 1 - \frac{s}{k+\sigma^2} \right) }.
$$
Note that 
$$
\prob{S=0}=\frac{\binom{p-k}{k}}{\binom{p}{k}} \ge \left( 1 - \frac{k}{p} \right)^k \ge 1- k^2/p =1+o_p(1),
$$
where the last equality holds due to $k \le p^{1/2-\delta}$ for some constant $\delta \in (0,1/2)$. 
Thus, to show \prettyref{eq:MGF_hyp_bound_2} it suffices to show
$$
 \sum_{s=1}^{\floor{\epsilon k}} \binom{k}{s} \left( \frac{k}{p-k+1} \right)^s e^{- n^* \log \left( 1 - \frac{s}{k+\sigma^2} \right) } = o_p(1),
$$
 and to show \prettyref{eq:MGF_hyp_bound_3} it suffices to show
$$
 \sum_{s=\ceil{\epsilon k}}^{\floor{\tau}} \binom{k}{s} \left( \frac{k}{p-k+1} \right)^s e^{- n^* \log \left( 1 - \frac{s}{k+\sigma^2} \right) } = o_p(1),
$$

We first prove \prettyref{eq:MGF_hyp_bound_2}.

\paragraph*{Proof of \prettyref{eq:MGF_hyp_bound_2}:}

Using the fact that $\binom{k}{s} \le k^s$, we have
\begin{align*}
 \sum_{s=1}^{\floor{\epsilon k}} \binom{k}{s} \left( \frac{k}{p-k+1} \right)^s e^{- n^* \log \left( 1 - \frac{s}{k+\sigma^2} \right)  }
 &\le   \sum_{s=1}^{\floor{\epsilon k}} k^s \left( \frac{k}{p-k+1} \right)^s e^{- n^* \log \left( 1 - \frac{s}{k+\sigma^2} \right) }\\
&=\sum_{s=1}^{\floor{\epsilon k}} e^{ -s \log \frac{p-k+1}{k^2} - n^* \log \left( 1 - \frac{s}{k+\sigma^2} \right) }\\
&=\sum_{s=1}^{\floor{\epsilon k}} e^{f(s) - s \log \frac{p-k+1}{p } }, 
\end{align*}
where for $s \in [1,\epsilon k]$ let the real-valued function $f$ be given by
$$
f(s) = -s \log \frac{p }{k^2 } - n^* \log \left( 1 - \frac{s}{k+\sigma^2} \right).
$$

\begin{claim}
Suppose $k \le p^{1/2-\delta}$ for a constant $\delta \in (0,1/2)$ and $\epsilon \le 1/2$. 
There exists a constant $C_1=C_1(\delta)>0$, such that if $k/\sigma^2 \geq C_1$ then 
it holds that for any $s \in [1,\epsilon k]$,  $f(s) \le -\frac{1}{2} s \log \frac{p}{k^2}$. 
\end{claim}

\begin{proof}[Proof of the Claim]
Standard calculus implies that for $x \in (0,1)$, $\log (1-x) \ge -(1+x) x$.  
Hence, for $0 \le x \le \epsilon \le 1/2,$
\begin{align}
\log(1-x) \ge - (1+\epsilon)x. \label{eq:log_bound}
\end{align}
Using this inequality it  follows that for since for any $s \in [1,\epsilon k]$ $\frac{s}{k+ \sigma^2} \leq \epsilon$, it also holds
$$
 f(s) \le -s\log \frac{p}{k^2} + n^* (1+\epsilon) \frac{s}{k+\sigma^2} 
 = s \left( - \log \frac{p}{k^2}+ \frac{n (1+\epsilon)}{k+\sigma^2}  
\right)
\le - \frac{1}{2} s \log \frac{p}{k^2},
$$
where the last inequality holds under the assumption that 
$$
n^* \le \frac{ (k+\sigma^2) \log \frac{p}{k^2} } {2 (1+\epsilon) }.
$$
Recall that $n^*=\frac{2k \log (p/k)}{\log (1+ k/\sigma^2)} $.
Hence it suffices to show that
$$
 \frac{2k \log (p/k)}{\log (1+ k/\sigma^2)} \le  \frac{ (k+\sigma^2) \log \frac{p}{k^2} } { 2 (1+\epsilon) } 
$$
which holds if and only if 
\begin{align}\label{targapp}
\left[ 1 - \frac{4(1+\epsilon)}{ (1+ \sigma^2/k) \log (1+k/\sigma^2)}\right] \log \frac{p}{k} \ge \log k.
\end{align} 
By assumption, $k \le p^{1/2-\delta}$ for  $ \delta \in (0,\frac{1}{2})$. Hence, (\ref{targapp}) is satisfied if 
$$
1 - \frac{4(1+\epsilon)}{ (1+ \sigma^2/k) \log (1+k/\sigma^2)} \geq \frac{\frac{1}{2}-\delta}{\frac{1}{2}+\delta}.$$ 
Since $\epsilon \le 1/2$, 
there exists a constant $C_1=C_1(\delta)>0$ depending only on $\delta$ such that if $ \frac{k}{\sigma^2} \geq C_1$ then the last displayed
equation is satisfied. 
This completes the proof of the claim. 
\end{proof}

Using the above claim we conclude that 
$$
\sum_{s=1}^{\floor{\epsilon k}} e^{f(s) -s \log \frac{p-k+1}{p} } \le \sum_{s=1}^{\floor{\epsilon k} } e^{ -\frac{1}{2} s \left( \log (p/k^2) + 2 \log \frac{p-k+1}{p} \right) }
\le \frac{ e^{-\frac{1}{2} \log \frac{(p-k+1)^2}{pk^2}} }{1 - e^{-\frac{1}{2} \log \frac{(p-k+1)^2}{pk^2}} }=o_p(1),
$$
where the last equality holds due to $k \le p^{\frac{1}{2}-\delta}$.

\medskip

Next we prove \prettyref{eq:MGF_hyp_bound_3}. Again it suffices to 
prove \prettyref{eq:MGF_hyp_bound_3} for $n=n^*$.

\paragraph*{Proof of \prettyref{eq:MGF_hyp_bound_3}:}
Note that 
$
\binom{k}{s} \le  2^k.
$
Hence,
\begin{align*}
 \sum_{s=\ceil{\epsilon k}}^{\floor{\tau} } \binom{k}{s} \left( \frac{k}{p-k+1} \right)^s e^{- n^* \log \left( 1 - \frac{s}{k+\sigma^2} \right)  } & \le 2^k \sum_{s=\ceil{\epsilon k}}^{ \floor{\tau}} \left( \frac{k}{p-k+1} \right)^s e^{ - n^* \log \left( 1 - \frac{s}{k+\sigma^2} \right) }\\
 & = 2^k \sum_{s=\ceil{\epsilon k}}^{ \floor{\tau}} e^{ -s \log \frac{p}{k} - n^* \log \left( 1 - \frac{s}{k+\sigma^2} \right) - s \log \frac{(p-k+1)}{p} }.
\end{align*}
Define for $s \in [0,k]$, the function $g$ given by
\begin{align}
g(s) \triangleq -s \log \frac{p}{k} - n^* \log \left( 1 - \frac{s}{k+\sigma^2} \right). 
\label{eq:g_function}
\end{align}
The function $g$ is convex in $s$ for $\epsilon k \le s \le \tau$, as the addition of two convex functions. Hence, the maximum of $g(s)$ 
over $s \in [\epsilon k, \tau]$ 
is achieved at either $s=\epsilon k$ or 
$s= \tau.$  Thus it suffices to upper bound $g(\epsilon k)$ and $g(\tau)$.

\begin{claim}\label{claim:g_bound}
 There exist a universal constant $C_2>0$ such that if $k/\sigma^2 \geq C_2$, then 
 $g(\tau) \le -\frac{1}{2} k \log (p/k)$ and
 $g(\epsilon k) \le -\frac{\epsilon k}{2} \log  \frac{p }{k} $. 
 \end{claim}

\begin{proof}[Proof of the Claim]
We first upper bound $g(\tau)$.
\begin{align*}
g\left(\tau \right) & \le -\tau \log \frac{p}{k} 
- n^* \log \left(1 - \frac{ \tau}{k} \right) \\
&=- \left(1-\frac{1}{\log^2 \lambda^2}\right)  k \log \frac{p}{k} +  \frac{4 k \log (p/k) \log \log (\lambda^2) }{ \log (\lambda^2)},
\end{align*} 
where the last equality holds by plugging in 
the expressions of $\tau$ and $n^*$.

Recall that $\lambda^2 = 1+ k/\sigma^2$.
Hence, there exists a universal constant $C_2>0$ such that if $k/\sigma^2 \ge C_2$, then
$$
- \left(1-\frac{1}{\log^2 \lambda^2}\right)  k \log \frac{p}{k} +  \frac{4 k \log (p/k) \log \log (\lambda^2) }{ \log (\lambda^2)}
\le -\frac{1}{2} k \log \frac{p}{k}.
$$ 
Combining the last two displayed equations yields that $g(\tau) \le -\frac{1}{2} k \log (p/k)$.

For $g(\epsilon k)$, applying \prettyref{eq:log_bound}, we get that
$$
g(\epsilon k) = - \epsilon k \log \frac{p}{k} - n^* \log \left( 1- \frac{\epsilon k}{k+\sigma^2} \right)
\le - \epsilon k \log \frac{p}{k} + \frac{n^* \epsilon k }{k+\sigma^2} (1+\epsilon)
=\epsilon k \left(  - \log \frac{p}{k} + \frac{n^*(1+\epsilon)} {k+\sigma^2}  \right).
$$
Note that we can conclude $g(\epsilon k) \leq -\frac{\epsilon k}{2} \log  \frac{p}{k} $ if 
$$
- \log \frac{p}{k} + \frac{n^*(1+\epsilon)} {k+\sigma^2}
\le - \frac{1}{2} \log \frac{p}{k}  
$$
which holds if and only if 

$$
 n^* = \frac{2k \log (p/k)}{\log (1+ k/\sigma^2)} \le  \frac{ (k+\sigma^2) \log (p/k) } { 2 (1+\epsilon) } 
$$
or equivalently
\begin{align*}
\frac{4(1+\epsilon)}{ (1+ \sigma^2/k) \log (1+k/\sigma^2)} \le 1.
\end{align*}
Note that there exists a universal  constant $C_2>0$ such that if  $k/\sigma^2 \ge C_2$
then the last displayed inequality is satisfied 
and hence 
$
g(\epsilon k) 
\leq -\frac{\epsilon k}{2} \log  \frac{p }{k} 
$
where the last inequality holds by choosing $C_2$ sufficiently large. 
\end{proof}

Using the above claim we now have that if $k/\sigma^2 \geq C_2$, 
\begin{align*}
 \sum_{s=\ceil{\epsilon k} }^{\floor{\tau}} \binom{k}{s} \left( \frac{k}{p-k+1} \right)^s e^{- n^* \log \left( 1 - \frac{s}{k+\sigma^2} \right)  } &
 \le 2^k \sum_{s=\ceil{\epsilon k}}^{ \floor{\tau } } e^{ g(s) - s \log \frac{(p-k+1)}{p} }\\
& \le  e^{ k\log 2 + \log k- \frac{\epsilon k}{2}  \log \frac{p}{k}- k  \log \frac{(p-k+1)}{p} } =o_p(1),
\end{align*}
where the last equality holds due to $\log k \le k$, $k=o(p)$, and that
$$
\frac{\epsilon k}{2} \log  \frac{p}{k} 
= - \frac{k}{4} \frac{\log \log (p/k)}{\log (p/k)}   \log  \frac{p}{k} 
=- \frac{k}{4} \log \log (p/k).
$$

\end{proof}

Finally, we upper bound the ``exponential'' moment in the large overlap regime ($s \ge \tau$) where $\tau$ is defined in (\ref{DfnTau}).
\begin{lemma}\label{lmm:MGF_large_regime}
Suppose that $k \le c p$ for $c \in (0,1)$ and $k/\sigma^2 \ge C$ for a sufficiently large universal constant $C$. 
If $n \leq \frac{1}{2} (1-\alpha)n^*$ for some $\alpha \le 1/2$,
then
\begin{align}
\Exp_{S \sim \mathrm{Hyp}(p,k,k)} \left[ \left( 1- \frac{S}{k+\sigma^2} \right)^{-n}  \indc{ S \ge \tau }  \right ] 
\le \exp \left( -\alpha k \log \frac{p}{k} + \log \frac{2-c}{1-c} \right). \label{eq:MGF_hyp_bound_1}
\end{align}
\end{lemma}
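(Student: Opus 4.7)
My plan is to write the expectation as a sum, change variables to $t = k - s$, and bound the resulting sum term by term while extracting the target exponential factor $\exp(-\alpha k \log(p/k))$ as a prefactor. Concretely, I would begin by writing
\[
\Exp_{S \sim \mathrm{Hyp}(p,k,k)}\!\left[\!\left(1 - \frac{S}{k+\sigma^2}\right)^{\!-n}\!\indc{S \ge \tau}\right]
= \sum_{t=0}^{\lfloor k - \tau \rfloor} \mathbb{P}(S = k - t)\left(\frac{k+\sigma^2}{\sigma^2 + t}\right)^{\!n}
\]
and apply \prettyref{lmm:Hyp_bound} to get $\mathbb{P}(S = k - t) \le \binom{k}{t}(k/(p-k+1))^{k-t}$, which is sharp in precisely the large-overlap regime where $t = k - s$ is small.

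Next, I would exploit the hypothesis $n \le (1-\alpha)n^*/2 = (1-\alpha)k\log(p/k)/\log(1+k/\sigma^2)$, which yields the crucial bound $(1+k/\sigma^2)^n \le (p/k)^{(1-\alpha)k}$. Writing $((k+\sigma^2)/(\sigma^2+t))^n = (1+k/\sigma^2)^n (1+t/\sigma^2)^{-n}$ and pulling out the $(1+k/\sigma^2)^n$ factor, the $t = 0$ summand becomes bounded by $(k/(p-k+1))^k (p/k)^{(1-\alpha)k}$. After factoring this as $(k/p)^{\alpha k} \cdot (p/(p-k+1))^k$ and using $k \le cp$ to bound $p/(p-k+1) \le 1/(1-c)$, one extracts the target $\exp(-\alpha k\log(p/k))$ prefactor together with a contribution to the $1/(1-c)$ piece of the constant $(2-c)/(1-c)$.

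To handle the contribution from $t \ge 1$, the idea is to compare consecutive summands. The ratio
\[
\frac{\mathrm{term}(t+1)}{\mathrm{term}(t)} = \frac{(k-t)(p-k-t)/k \cdot (p-k+1)/k}{(t+1)^2}\left(\frac{\sigma^2+t}{\sigma^2+t+1}\right)^{\!n}
\]
together with the condition $k/\sigma^2 \ge C$ (which ensures that $n$ is large enough that $(1 - 1/(\sigma^2+t+1))^n \approx e^{-n/(\sigma^2+t+1)}$ decays rapidly in $t$) should make the tail $t \ge 1$ a convergent geometric-type series whose sum is controlled by a constant multiple of the $t=0$ term. The total contribution of $t \ge 1$ should then be at most $(k/p)^{\alpha k} \cdot 1/(1-c)$, which combined with the $t=0$ contribution of at most $(k/p)^{\alpha k}$ produces the announced $(2-c)/(1-c) = 1 + 1/(1-c)$ constant.

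The main obstacle will be step three: justifying the geometric decay of the ratio uniformly across the entire range $t \in [1, k-\tau]$, and simultaneously matching the explicit constant $(2-c)/(1-c)$ rather than just $O(1)$. The challenge is that the combinatorial ratio $(k-t)(p-k-t)/(t+1)^2$ is very large for small $t$ (it is roughly $k(p-k)$ near $t=0$), and we need the quasi-exponential factor $(\sigma^2+t)^n/(\sigma^2+t+1)^n$ to more than compensate. This will force me to use both the lower bound on $k/\sigma^2$ (to guarantee the decay rate $n/(\sigma^2+t)$ dominates $\log(k(p-k)/(t+1)^2)$) and the specific form of $\tau = k(1 - 1/\log^2(1+k/\sigma^2))$, which truncates the sum well before $t$ becomes comparable to $\Exp[k - S] \approx k$.
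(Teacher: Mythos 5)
Your overall strategy (reduce to a sum via \prettyref{lmm:Hyp_bound}, extract $(1+k/\sigma^2)^n\le (p/k)^{(1-\alpha)k}$, and control the rest) starts the same way as the paper, but the step you yourself flag as the main obstacle is a genuine gap, and the proposed fix does not work in the lemma's stated regime. Your plan anchors everything to the $t=0$ (i.e.\ $s=k$) term and asks the summands to decay geometrically in $t=k-s$. This fails for two reasons. First, the lemma imposes \emph{no lower bound} on $n$: it covers every $n\le \tfrac12(1-\alpha)n^*$, including very small $n$, and for small $n$ the factor $(1+t/\sigma^2)^{-n}$ is essentially useless, so the summand \emph{increases} in $t$ and peaks near $s=\tau$, not at $s=k$; a ratio test started at $t=0$ cannot control that. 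Second, even at the largest allowed $n$, the decay you need at $t=1$ requires roughly $n\log(1+1/\sigma^2)\gtrsim \log\bigl(k(p-k)\bigr)$, but with $\sigma^2\asymp k/C$ one only has $n/\sigma^2 = O\bigl(C\log(p/k)/\log(1+C)\bigr)$, while the lemma allows $k\le cp$ with $c\in(0,1)$ fixed, where $\log(p/k)=O(1)$ and $\log(kp)\asymp 2\log p\to\infty$; no universal constant $C$ makes the ratio less than $1$ there. The paper avoids both problems with a different idea: it writes the log-summand (after peeling off $\binom{k}{s}$ and $(p/(p-k+1))^s$, which are summed crudely at the end) as the \emph{convex} function $g_n(s)=-s\log(p/k)-n\log\bigl(1-\tfrac{s}{k+\sigma^2}\bigr)$, so its maximum over $[\tau,k]$ is attained at an endpoint; then $g_n(k)\le-\alpha k\log(p/k)$ follows exactly from $n\le\tfrac12(1-\alpha)n^*$, and $g_n(\tau)\le g_{n^*}(\tau)\le-\tfrac12 k\log(p/k)\le-\alpha k\log(p/k)$ (Claim~\ref{claim:g_bound}, which is where $k/\sigma^2\ge C$ and the specific form of $\tau$ are actually used, and where monotonicity in $n$ handles all smaller $n$ at once). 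You would need this endpoint/convexity argument, or an equivalent treatment of the $s=\tau$ end, to close your step three.

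A smaller but real accounting error: bounding the $t=0$ term with \prettyref{lmm:Hyp_bound} gives $(k/p)^{\alpha k}\bigl(p/(p-k+1)\bigr)^{k}$, and $\bigl(p/(p-k+1)\bigr)^{k}\le (1-c)^{-k}$, which is exponential in $k$, not the single factor $1/(1-c)$ you attribute to it (you would need the sharper $1/\binom{p}{k}\le (k/p)^k$ to remove it). In fact the paper's own proof only establishes the bound with $k\log\frac{2-c}{1-c}$ in the exponent (the lemma's display appears to drop the factor $k$), and that weaker form is all that is needed in the subsequent applications, where $c=p^{-1/2-\delta}$; so aiming for the literal multiplicative constant $\frac{2-c}{1-c}$ is both harder than necessary and not what your $t\ge 1$ ratio argument, as sketched, can deliver.
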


\begin{proof}

Using Lemma \ref{lmm:Hyp_bound}, we get that 

\begin{align*}
\Exp_{S \sim \mathrm{Hyp}(p,k,k)} \left[ \left( 1- \frac{S}{k+\sigma^2} \right)^{-n}  \indc{S \ge \tau } \right ] 
& \le  \sum_{s=\floor{\tau} }^{k} \binom{k}{s} \left( \frac{k}{p-k+1} \right)^s e^{- n \log \left( 1 - \frac{s}{k+\sigma^2} \right)  } \\
& \le  \sum_{s=\floor{\tau} }^{k}  \binom{k}{s} e^{ -s \log \frac{p}{k } - n \log \left( 1 - \frac{s}{k+\sigma^2} \right) - s \log \frac{p-k+1}{p} } \\
& =  \sum_{s=\floor{\tau} }^{k}  \binom{k}{s} e^{ g_n(s) - s \log \frac{p-k+1}{p} },
\end{align*}
where $g_n(s)$ is  given by 
$$
g_n(s) \triangleq -s \log \frac{p}{k} - n \log \left( 1 - \frac{s}{k+\sigma^2} \right).
$$

Note that $g_n(s)$ is convex in $s$
for $\tau \le s \le k$. Hence, the maximum of $g_n(s)$ 
over $s \in [\tau, k]$ 
is achieved at either $s=\tau$ or 
$s= k.$  In view of \prettyref{eq:g_function} and Claim~\ref{claim:g_bound}, for all $n \le n^*$. 
$$
g_n(\tau ) \le g_{n^*}( \tau ) = g(\tau ) \le -\frac{1}{2} k \log \frac{p}{k}.
$$
Thus it remains to upper bound $g_n(k)$.

\begin{claim}
Assume $n \leq \frac{1}{2} (1-\alpha) n^*$ for some $\alpha>0$. Then 
$g_n(k) \le -\alpha k \log (p/k)$. 
\end{claim}
\begin{proof}[Proof of the Claim]
For all $n \leq \frac{1}{2} (1-\alpha) n^*$, 
\begin{align*}
g_n(k)&=-k \log \frac{p}{k}-n \log \left(1-\frac{k}{k+\sigma^2} \right)\\
&=-k \log \frac{p}{k}+ \frac{1}{2} (1-\alpha)  n^* \log \left(1+\frac{k}{\sigma^2} \right)\\
&= -k \log \frac{p}{k} +(1-\alpha) k \log \left(\frac{p}{k} \right)\\ 
&= -\alpha k \log \frac{p}{k}.
\end{align*}
 \end{proof}

In view of the above claim and  the assumption that $\alpha \le 1/2$, we conclude that 
for all $n  \leq \frac{1}{2} (1-\alpha) n^*$, 
\begin{align*}
\Exp_{S \sim \mathrm{Hyp}(p,k,k)} \left[ \left( 1- \frac{S}{k+\sigma^2} \right)^{-n}  \indc{S \ge \tau} \right ] &
\leq   \sum_{k = \floor{\tau}}^k \binom{k}{s}  e^{- \alpha  k \log \frac{p}{k}  - s \log \frac{p-k+1}{p} }\\
&\leq e^{- \alpha  k \log \frac{p}{k} } \sum_{s=0}^k \binom{k}{s}  \left( \frac{p}{p-k+1} \right)^s \\
& \leq e^{- \alpha  k \log \frac{p}{k} }  \left( 1+  \frac{p}{p-k+1} \right)^k \\
& \leq e^{- \alpha  k \log \frac{p}{k} + k \log \frac{2-c}{1-c} },
\end{align*} 
where the last equality holds due to the assumption $k \le c p$.  
\end{proof}

\section{Probability of the conditioning event}
In this section, we upper bound the probability that the conditioning event does not happen.

\begin{lemma}\label{lmm:conditioning}
Consider the set $\calE_{\gamma,\tau}$ defined in \eqref{eq:Egamma}. Let $\tau = k (1-\eta)$ for some $\eta \in [0,1]$. 
Then we have
\[
\prob{(X,\beta) \in \calE_{\gamma,\tau}^c}  \le \exp\left\{   - \frac{n\gamma}{4}   +   \eta  k \log\left(  \frac{e^2 p }{ \eta^2 k} \right)  
 \right\} .
\]
Furthermore, for 
$$
\eta = \frac{1}{ \log^2 ( 1+ k/\sigma^2 ) }, \quad \text{ and } \quad 
 \gamma \ge  \frac{ k \log (p/k) }{ n \log ( 1+ k/\sigma^2 ) } \vee \frac{k }{n} 
$$
then there exists a universal constant $C>0$ such that if $k/\sigma^2 \geq C$, then
\[
\prob{(X,\beta) \in \calE_{\gamma,\tau}^c}  \le  \exp\left\{   - \frac{n\gamma}{8} \right\}. 
\]

\end{lemma}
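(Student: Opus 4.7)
The plan is a union bound over the candidate competitors $\beta'$ together with the standard Gaussian chi-squared tail inequality. The key observation is that for any fixed binary $k$-sparse pair $\beta,\beta'$ with overlap $s = \iprod{\beta}{\beta'}$, the vector $X(\beta+\beta') \in \reals^n$ is centered Gaussian with covariance $2(k+s)\identity_n$, so $\|X(\beta+\beta')\|^2/(2(k+s)) \sim \chi^2_n$ and in particular $\expect{\|X(\beta+\beta')\|^2} = 2n(k+s)$. Hence the bad event for a fixed $\beta'$ reads $\chi^2_n \ge n(2+\gamma)$, and I would use the Laurent--Massart-type inequality $\prob{\chi^2_n \ge n(1+t)} \le \exp(-n[t - \log(1+t)]/2)$ evaluated at $t = 1+\gamma$. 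A short elementary check (the function $\gamma \mapsto 1+\gamma-\log(2+\gamma)-\gamma/2$ is nonnegative at $\gamma=0$ and has nonnegative derivative on $\gamma \ge 0$) yields $\prob{\chi^2_n \ge n(2+\gamma)} \le e^{-n\gamma/4}$ uniformly in $\gamma \ge 0$.

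Conditionally on $\beta$, I would then enumerate the $\beta'$ contributing to $\calE_{\gamma,\tau}^c$. Writing $\ell = k - s \in \{0,1,\dots,\lfloor \eta k \rfloor\}$, the number of $k$-sparse binary $\beta'$ with $\iprod{\beta}{\beta'} = k-\ell$ is exactly $\binom{k}{\ell}\binom{p-k}{\ell}$, which I would upper bound by $(ek/\ell)^\ell(ep/\ell)^\ell = (e^2 pk/\ell^2)^\ell$. Taking a union bound gives
\[
\prob{(X,\beta)\in \calE_{\gamma,\tau}^c} \;\le\; e^{-n\gamma/4}\sum_{\ell=0}^{\lfloor \eta k \rfloor}\Big(\tfrac{e^2pk}{\ell^2}\Big)^\ell.
\]
To bound the sum, I would observe that the map $\ell \mapsto \ell\log(e^2pk/\ell^2)$ has derivative $\log(pk/\ell^2)$, hence is increasing on $[0,\sqrt{pk}]$; since $\eta k \le k \le \sqrt{pk}$, the sum is dominated by its largest term, evaluated at $\ell = \lfloor \eta k\rfloor$, which gives $\exp(\eta k \log(e^2 p/(\eta^2 k)))$ after absorbing the polynomial prefactor into the $e^2$ slack. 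This produces the first claimed inequality.

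For the second part I would substitute $\eta = 1/\log^2 \lambda^2$ (writing $\lambda^2 = 1+k/\sigma^2$) and unpack
\[
\eta k \log\!\left(\tfrac{e^2 p}{\eta^2 k}\right) \;=\; \tfrac{k}{\log^2 \lambda^2}\bqth{\log(p/k) + 2 + 4\log\log \lambda^2},
\]
and I would verify that each of the three terms on the right-hand side is at most $n\gamma/24$ under the two lower bounds imposed on $\gamma$. The dominant $\log(p/k)$ term is controlled by $n\gamma \ge k\log(p/k)/\log\lambda^2$, which is enough provided $\log\lambda^2 \ge 24$ (a condition of the form $k/\sigma^2 \ge C$); the remaining constant and $\log\log\lambda^2$ terms are absorbed by $n\gamma \ge k$ once $C$ is taken large enough (making $\log^2\lambda^2 \gg \log\log\lambda^2$). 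Combining these yields $\eta k \log(e^2p/(\eta^2 k)) \le n\gamma/8$ and hence $-n\gamma/4 + \eta k\log(e^2 p/(\eta^2 k)) \le -n\gamma/8$, giving the stated conclusion.

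The only potential obstacle is the choice of the constant in the chi-squared tail, which must be robust enough to give the factor $1/4$ uniformly over $\gamma \ge 0$ (not only the small-$\gamma$ regime, where a factor $\gamma^2$ would appear, nor only the large-$\gamma$ regime, where $\gamma/2$ would appear). The elementary convexity argument above delivers exactly the uniform $\gamma/4$ bound that is needed; beyond that, the proof is routine union-bound bookkeeping.
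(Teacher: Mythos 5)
Your route is the same as the paper's: a union bound over the competitors $\beta'$ with $\iprod{\beta}{\beta'}\ge\tau$, a chi-squared tail bound giving $e^{-n\gamma/4}$ per competitor, and the count $\binom{k}{\ell}\binom{p-k}{\ell}$ over $\ell=k-s\le\floor{\eta k}$. Your Chernoff-form tail bound (checking that $1+\gamma/2-\log(2+\gamma)\ge 0$ for all $\gamma\ge 0$) is a perfectly adequate substitute for the Laurent--Massart inequality the paper invokes, and your treatment of the second claim --- substituting $\eta=1/\log^2(1+k/\sigma^2)$, splitting $\eta k\log(e^2p/(\eta^2k))$ into the $\log(p/k)$, constant, and $\log\log$ pieces, and comparing each with a fraction of $n\gamma$ using $n\gamma\ge k\log(p/k)/\log(1+k/\sigma^2)$ and $n\gamma\ge k$ --- is correct and matches the paper's bookkeeping up to the choice of constants.

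The step that does not go through as written is the summation over $\ell$. After bounding each term by $(e^2pk/\ell^2)^\ell$ and using monotonicity of $\ell\mapsto\ell\log(e^2pk/\ell^2)$, your largest term is already $\exp\{\eta k\log(e^2p/(\eta^2k))\}$, i.e.\ it saturates the bound claimed in the lemma; multiplying by the number of terms $\floor{\eta k}+1$ therefore overshoots, and there is no ``$e^2$ slack'' to absorb it, since the lemma's target carries the same $e^2$. The only genuine slack in your chain comes from replacing $p-k$ by $p$ in $\binom{p-k}{\ell}\le(ep/\ell)^\ell$, which buys at most a factor $(p/(p-k))^{\eta k}=e^{\eta k\log(p/(p-k))}$; in the regime $k\le p^{1/2-\delta}$ where the lemma is applied this is $1+o(1)$, far too small to cover a factor of order $\eta k$. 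The paper avoids any prefactor by a different organization of the same count: it bounds $(ek/\ell)^\ell\le(e/\eta)^{\eta k}$ uniformly over $\ell\le\eta k$ (monotonicity of $x\mapsto x\log(e/x)$ on $(0,1]$) and then applies $\sum_{i=0}^{d}\binom{m}{i}\le(em/d)^d$ to the entire remaining sum $\sum_{\ell\le\floor{\eta k}}\binom{p-k}{\ell}$, which yields exactly $(e^2p/(\eta^2k))^{\eta k}$. As it stands, your argument proves the first inequality only with an extra multiplicative factor $\floor{\eta k}+1$ --- harmless for the second claim and for every downstream use of the lemma, but not the inequality as stated; replacing the ``absorb into $e^2$'' remark with the paper's summation device (or with a genuine geometric-series argument that tracks $p-k$ throughout) closes the gap.
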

\begin{proof} 
Fix $\beta$ to be a $k$-sparse binary vector in $\{0,1\}^p$.  
Let $\beta'$ denote another $k$-sparse binary vector and $s=\inner{\beta,\beta'}$. We have $X(\beta+\beta') \sim \calN(0, 2(k+s) \identity_{n})$ and therefore  
$$
\frac{ \left\| X(\beta+\beta') \right\|^2 }{ 2(k+s)} \sim \chi^2_n.
$$
Observe also that the number of different $\beta'$ with $\iprod{\beta}{\beta'} \geq \tau$ is at most
$$
\sum_{\ell = 0}^{\floor{\eta k} } \binom{k}{\ell} \binom{p-k}{\ell}
$$ 
by counting on the different choices of positions of the entries where $\beta'$ differ from $\beta$. 
Combining the two observations it follows from the union bound that
\begin{align}
\prob{(X,\beta) \in \calE_{\gamma,\tau}^c \mid \beta } \le Q_{\chi^2_n}\left( n (2+ \gamma) \right) \sum_{\ell = 0}^{\floor{\eta k}} \binom{k}{\ell} \binom{p-k}{\ell} ,
\end{align}
where $Q_{\chi^2_n}(x)$ is the tail function of the chi-square distribution.

For all $x > 0$, we have (see, \eg, \cite[Lemma 1]{laurent2000adaptive}:
\begin{align}
Q_{\chi^2_n}\left( n (1 + \sqrt{x} + x/2) \right) \le e^{ -  \frac{n x}{ 4}  }.
\end{align}
Noting that $ \sqrt{\gamma} + \gamma/2 \le 1+ \gamma$ for all $\gamma>0$,
we see that $Q_{\chi^2_n}( n (2+ \gamma) ) \le \exp\left\{  -  n \gamma /4 \right\}.$

Next, using the inequalities $\binom{a}{b} \le (\frac{ae}{b})^b$ for $a,b \in \mathbb{Z}_{>0}$ with $a<b$, that $x \rightarrow x \log x$  decreases in $(0,\frac{1}{e})$, and 
$\sum_{i=0}^d \binom{m}{i}\le (\frac{me}{d})^d$ for $d,m \in \mathbb{Z}_{>0}$ with $d<m$ (see, \eg, \cite{Kumar10}), we get that
\begin{align*}
\sum_{\ell = 0}^{\floor{\eta k}} \binom{k}{\ell} \binom{p-k}{\ell} & \le  \sum_{\ell = 0}^{\floor{\eta k}}  
\left( \frac{ek}{\ell} \right)^{\ell }  \binom{p-k}{\ell} \\
 & \le \left( \frac{e}{ \eta} \right)^{ \eta k } 
\sum_{\ell = 0}^{\floor{\eta k}} \binom{p-k}{\ell} \\
& \le \left( \frac{e^2 p }{ \eta^2 k } \right)^{ \eta k}.
\end{align*}

Combining the above expressions completes the first part of the proof of the Lemma.

For the second part, note that under our choice of $\eta$,  
$$
 - \frac{n\gamma}{4}   +   \eta  k \log\left(  \frac{e^2 p }{ \eta^2 k }  \right)
 =  - \frac{n\gamma}{4}   +  \frac{  k \left( \log (p/k) + 4 \log \log (1 + k /\sigma^2) + 2  \right) }{  \log^2(1 + k /\sigma^2) } 
$$
Under the choice of $\gamma$, there exists a universal constant $C>0$ such that if 
if $k/\sigma^2 \geq C$, then
\begin{align*}
\frac{n\gamma}{16} & \ge \frac{k \log (p/k) } { \log^2(1 + k /\sigma^2) }  \\
\frac{n\gamma}{16} & \ge \frac{k \left( 4 \log \log (1 + k /\sigma^2) + 2  \right)}{ \log^2(1 + k /\sigma^2) }.
\end{align*}
Combining the last two displayed equation yields that 
$$
- \frac{n\gamma}{4}   +   \eta  k \log\left(  \frac{e^2 p }{ \eta^2 k }  \right) \le - \frac{n \gamma}{8}.
$$ 
This completes the proof of the lemma.

\end{proof}

\section{The reason why $k = o(p^{1/2})$ is needed for weak detection threshold $n^*$}\label{app:assumption_needed}

This section shows that weak detection between the planted model $P$ and the null model
$Q_\lambda$ is possible for any choice of $\lambda>0$ and for all $n = \Omega_p(n^*)$, if
$k = \Omega_p(p^{1/2})$, $k/\sigma^2=\Omega_p(1)$, and $\log(p/k)=\Omega_p\left(\log(1+k/\sigma^2) \right)$.  In particular, we show the following proposition.

\begin{proposition}
Suppose
\begin{align}
\frac{n k^2}{ p \left( k+\sigma^2 - k^2/p \right) } = \Omega_p(1). \label{eq:weak_detection_upper_bound}
\end{align}
Then weak detection is information-theoretically possible.
\end{proposition}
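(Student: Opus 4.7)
The plan is to exhibit explicit test statistics and use a central-limit argument to convert a lower bound on their signal-to-noise ratio into a bound on the sum of Type-I plus Type-II errors. I would split on the size of $\lambda$: whenever $\lambda^2\sigma^2\notin[(k+\sigma^2)/2,\,2(k+\sigma^2)]$, the naive statistic $\|Y\|^2$ already works because $\|Y\|^2\sim(k+\sigma^2)\chi^2_n$ under $P$ and $\|Y\|^2\sim\lambda^2\sigma^2\chi^2_n$ under $Q_\lambda$, producing a Z-score of order $\sqrt n\to\infty$ in this regime (since $n\geq n^*\to\infty$ in the scaling under consideration). I therefore focus on the hard regime $\lambda^2\sigma^2\asymp k+\sigma^2$.

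For the hard regime, motivated by the conditional-mean mismatch $\Exp_P[Y\mid X]=\tfrac{k}{p}X\mathbf{1}$ versus $\Exp_{Q_\lambda}[Y\mid X]=0$, I would use
\[
T:=\inner{Y,\,X\mathbf{1}}=\sum_{l=1}^n Y_l V_l,\qquad V_l:=X_l^\top\mathbf{1},
\]
where $X_l$ is the $l$-th row of $X$. Under $Q_\lambda$ the summands $Y_l V_l$ are iid products of independent zero-mean Gaussians; under $P$ the conditional distribution of $(Y_l,V_l)$ given $\beta$ depends on $\beta$ only through $\|\beta\|^2=k$ and $\mathbf{1}^\top\beta=k$, both of which are deterministic, so the summands are genuinely iid. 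A short Wick-type calculation then gives
\[
\Exp_P[T]=nk,\ \mathrm{Var}_P(T)=n\bigl(p(k+\sigma^2)+k^2\bigr);\quad \Exp_{Q_\lambda}[T]=0,\ \mathrm{Var}_{Q_\lambda}(T)=np\lambda^2\sigma^2.
\]
Since each summand is a degree-$2$ polynomial in iid Gaussians with explicitly bounded third absolute moment, Berry--Esseen supplies a normal approximation with $o(1)$ Kolmogorov error under both measures.

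Choosing threshold $\tau=nk/2$, asymptotic normality yields $P(T<\tau)+Q_\lambda(T\geq\tau)=\Phi(-\rho_P)+1-\Phi(\rho_Q)+o(1)$, where $\rho_P^2=\tfrac{nk^2}{4(p(k+\sigma^2)+k^2)}$ and $\rho_Q^2=\tfrac{nk^2}{4p\lambda^2\sigma^2}$. The remaining step is to verify $\rho_P,\rho_Q=\Omega(1)$ from the hypothesis $nk^2\gtrsim p(k+\sigma^2)-k^2$. A short case split handles this: if $k^2\leq p(k+\sigma^2)/2$, then $p(k+\sigma^2)-k^2\geq p(k+\sigma^2)/2$ and the hypothesis directly gives $nk^2/(p(k+\sigma^2))=\Omega(1)$; otherwise $p(k+\sigma^2)<2k^2$ and so $nk^2/(p(k+\sigma^2))>n/2\to\infty$. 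Combined with $\lambda^2\sigma^2\leq 2(k+\sigma^2)$ in the hard regime, both $\rho_P$ and $\rho_Q$ are bounded away from $0$, whence $\Phi(-\rho_P)+1-\Phi(\rho_Q)\leq 1-\delta$ for some $\delta>0$ eventually, establishing weak detection.

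The main delicate point will be the case analysis converting the slightly awkward hypothesis (with the $k^2/p$ correction) into uniform $\Omega(1)$ lower bounds on both $\rho_P$ and $\rho_Q$, particularly around the edge $k^2\approx p(k+\sigma^2)$ where the two ways of expressing the variance start to differ nontrivially. The Berry--Esseen step and the Wick-type moment computation are routine once one notes that the summand variance under $P$ does not depend on the particular realization of $\beta$.
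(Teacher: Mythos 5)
Your test statistic is the same one the paper uses: the paper correlates $Y$ with $X\bar\beta$ where $\bar\beta=\Exp[\beta]=\frac{k}{p}\mathbf{1}$, which is exactly $\frac{k}{p}\langle Y,X\mathbf{1}\rangle$, i.e.\ a positive multiple of your $T$. The difference is in the analysis. The paper thresholds at $0$ and exploits that under $Q_\lambda$ the vector $Y$ is independent of $X\bar\beta$, so $Q_\lambda(\langle Y,X\bar\beta\rangle\le 0)$ is \emph{exactly} $1/2$ for every $\lambda$ and every $n$; under $P$ it conditions on $X\bar\beta$ and writes the error as an exact Gaussian tail expectation $\Exp[Q(\sqrt{\tfrac{k^2/p}{k+\sigma^2-k^2/p}}\,\|A\|)]$ with $\|A\|^2\sim\chi^2_n$. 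This removes any need for a case split on $\lambda$, any CLT, and any matching of variances. Your route — mean threshold $nk/2$, Wick computation of the two variances (which are correct: $n(p(k+\sigma^2)+k^2)$ under $P$ and $np\lambda^2\sigma^2$ under $Q_\lambda$), Berry--Esseen, and the case analysis turning the hypothesis into $\rho_P,\rho_Q=\Omega(1)$ — is sound in the regime $n\to\infty$, and your observation that the summands are i.i.d.\ because their conditional law given $\beta$ depends only on $\|\beta\|^2=k$ and $\mathbf{1}^\top\beta=k$ is exactly right. Note also that your $\lambda$ case split is dispensable even within your own framework: under $Q_\lambda$ the statistic $T$ is symmetric about $0$, so $Q_\lambda(T\ge nk/2)\le 1/2$ with no distributional approximation, and $\rho_P=\Omega(1)$ alone already gives error sum at most $1-\delta$.

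The one genuine caveat is your reliance on $n\to\infty$: you invoke ``$n\ge n^*\to\infty$'' in the easy regime and you need $n\to\infty$ for the $o(1)$ Berry--Esseen error, but neither is part of the proposition's hypothesis. Condition \eqref{eq:weak_detection_upper_bound} can hold with $n$ bounded (e.g.\ when $k^2\gtrsim p(k+\sigma^2)$, since $p(k+\sigma^2-k^2/p)\ge p\sigma^2$ can be much smaller than $nk^2$ even for $n=1$), and in that case your normal approximation gives nothing as written. The paper's argument covers bounded $n$ automatically because the $Q_\lambda$-error is exactly $1/2$ and the $P$-side is an exact computation; you could patch your proof either by adopting that symmetry/threshold-$0$ device or by replacing the CLT under $P$ with a direct anti-concentration bound for the fixed-$n$ Gaussian-product statistic. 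In the paper's intended application (the remark with $n=\Omega(n^*)$ and $n^*\to\infty$) this issue does not arise, so your argument does establish the result in the regime it is actually used.
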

\begin{remark}
If $k/\sigma^2=\Omega_p(1)$ and $k/p$ is bounded away from $1$, 
then \prettyref{eq:weak_detection_upper_bound} is equivalent to 
\[
\frac{nk}{p} =\Omega_p(1). 
\]
Recall that 
\[
n^* =  \frac{2k \log (p/k)}{\log (1+k/\sigma^2)}. 
\]
Therefore, if furthermore $k = \Omega_p(p^{1/2})$ and  $\log(p/k)=\Omega_p\left(\log(1+k/\sigma^2) \right)$,

then $n^* k/p = \Omega_p(1)$ and hence weak detection is possible for all $n=\Omega_p(n^*)$. 

\end{remark}

\begin{proof}
Let $\bar{\beta} = \expect{ \beta}$ and consider the test statistic 
\[
\calT(X, Y)  = \iprod{Y}{ X\bar{\beta}};
\]
we declare planted model if $\calT(X,Y) \ge 0$ and null model otherwise.  
Let $A, B$ be independent $n$-dimensional standard Gaussian vectors. 
Then we have that 
\begin{align*}
\left( X\bar{\beta}, Y \right)  \overset{d}{=} 
\begin{cases}
\left( \sqrt{k^2/p}\, A, \; \sqrt{k^2/p} \, A + \sqrt{k+\sigma^2 - k^2 /p} \, B \right) & \text{ if } (X, Y) \sim P \\
\left( \sqrt{k^2/p} \, A,  \; \lambda \sigma B \right)  & \text{ if } (X, Y) \sim Q_{\lambda}. 
\end{cases}
\end{align*}
Hence,
$$
Q_{\lambda} \left( \iprod{Y}{ X\bar{\beta}}  \le 0 \right) = \frac{1}{2}, 
$$
and 
\begin{align*}
P\left( \iprod{Y}{ X\bar{\beta}} \le 0 \right) & = \expect{ Q\left( \sqrt{ \frac{k^2/p}{ k+\sigma^2 - k^2/p}  } \|A \| \right)},
\end{align*}
where $Q(x) = \int_x^{\infty} (2\pi)^{-1/2} \exp(-t^2/2) \, \d t$
is the tail function of the standard Gaussian. 

Therefore, as long as $\sqrt{ \frac{k^2/p}{ k+\sigma^2 - k^2/p}  } \|A \| $ does not converge to $0$ in probability, 
then 
$P\left( \iprod{Y}{ X\bar{\beta}} \le 0 \right) \le 1/2-\epsilon$ for some positive constant $\epsilon>0$.
Thus,
$$
P\left( \iprod{Y}{ X\bar{\beta}} < 0 \right) + Q_{\lambda} \left( \iprod{Y}{ X\bar{\beta}} \ge  0 \right) \le 1-\epsilon;
$$
hence weak detection is possible. Since $\|A\|^2_2 \sim \chi^2_n$ highly concentrates on $n$, it follows that if 
\begin{align}
\frac{n k^2}{ p \left( k+\sigma^2 - k^2/p \right) } = \Omega_p(1), \label{eq:weak_detection_upper_bound}
\end{align}
then weak detection is possible.

\end{proof}

\end{appendices}

\bibliographystyle{alpha}

\bibliography{graphical_combined,long_names,support_recovery,bibliographyIZ}

\newcommand{\etalchar}[1]{$^{#1}$}
\begin{thebibliography}{KKM{\etalchar{+}}17}

\bibitem[AKJ17]{alaoui2017finite}
Ahmed~El Alaoui, Florent Krzakala, and Michael~I Jordan.
\newblock Finite size corrections and likelihood ratio fluctuations in the
  spiked {W}igner model.
\newblock {\em arXiv preprint arXiv:1710.02903}, 2017.

\bibitem[ASZ10]{aeron:2010}
Shuchin Aeron, Venkatesh Saligrama, and Manqi Zhao.
\newblock Information theoretic bounds for compressed sensing.
\newblock {\em IEEE Transactions on Information Theory}, 56(10):5111--5130,
  October 2010.

\bibitem[AT10]{akcakaya:2010}
Mehmet Akcakaya and Vahid Tarokh.
\newblock Shannon-theoretic limits on noisy compressive sampling.
\newblock {\em IEEE Transactions on Information Theory}, 56(1):492--504,
  December 2010.

\bibitem[Bal67]{baldessari1967distribution}
Bruno Baldessari.
\newblock The distribution of a quadratic form of normal random variables.
\newblock {\em The Annals of Mathematical Statistics}, 38(6):1700--1704, 1967.

\bibitem[BC12]{Barron:2012}
A.~R. Barron and S.~Cho.
\newblock High-rate sparse superposition codes with iteratively optimal
  estimates.
\newblock {\em Proc. IEEE Int. Symp. Inf. Theory}, 2012.

\bibitem[BDMK16]{barbier:2016}
Jean Barbier, Mohamad Dia, Nicolas Macris, and Florent Krzakala.
\newblock The mutual information in random linear estimation.
\newblock In {\em Proceedings of the Allerton Conference on Communication,
  Control, and Computing}, Monticello, IL, 2016.

\bibitem[BLM13]{boucheron2013concentration}
S.~Boucheron, G.~Lugosi, and P.~Massart.
\newblock {\em Concentration inequalities: A nonasymptotic theory of
  independence}.
\newblock Oxford University Press, 2013.

\bibitem[BMNN16]{banks-etal-colt}
Jess Banks, Cristopher Moore, Joe Neeman, and Praneeth Netrapalli.
\newblock Information-theoretic thresholds for community detection in sparse
  networks.
\newblock In {\em Proceedings of the 29th Conference on Learning Theory, {COLT}
  2016, New York, NY, June 23-26 2016}, pages 383--416, 2016.

\bibitem[BMV{\etalchar{+}}18]{Banks16}
J.~Banks, C.~Moore, R.~Vershynin, N.~Verzelen, and J.~Xu.
\newblock Information-theoretic bounds and phase transitions in clustering,
  sparse pca, and submatrix localization.
\newblock {\em IEEE Transactions on Information Theory}, 64(7):4872--4894,
  2018.

\bibitem[CDS01]{SignDen}
Scott~Shaobing Chen, David~L. Donoho, and Michael~A. Saunders.
\newblock Atomic decomposition by basis pursuit.
\newblock {\em SIAM Rev.}, 43(1):129--159, January 2001.

\bibitem[CH90]{miller:1990}
Alan~Miller. Chapman and Hall.
\newblock Subset selection in regression.
\newblock {\em Chapman and Hall}, 1990.

\bibitem[Cho14]{CHO:2014}
S.~Cho.
\newblock High-dimensional regression with random design, including sparse
  superposition codes.
\newblock {\em Ph.D. dissertation, Dept. Statist., Yale Univ., New Haven, CT,
  USA}, 2014.

\bibitem[CT05]{candes2005decoding}
Emmanuel~J Candes and Terence Tao.
\newblock Decoding by linear programming.
\newblock {\em IEEE transactions on information theory}, 51(12):4203--4215,
  2005.

\bibitem[CT06]{cover2006elements}
Thomas~M Cover and Joy~A Thomas.
\newblock Elements of information theory 2nd edition.
\newblock {\em Willey-Interscience: NJ}, 2006.

\bibitem[Don06]{donoho2006compressed}
David~L Donoho.
\newblock Compressed sensing.
\newblock {\em IEEE Transactions on information theory}, 52(4):1289--1306,
  2006.

\bibitem[FRG09]{fletcher:2009}
Alyson~K. Fletcher, Sundeep Rangan, and Vivek~K Goyal.
\newblock Necessary and sufficient conditions for sparsity pattern recovery.
\newblock {\em IEEE Transactions on Information Theory}, 55(12):5758--5772,
  November 2009.

\bibitem[GV05]{guo:2005}
Dongning Guo and Sergio Verd\'{u}.
\newblock Randomly spread {CDMA}: Asymptotics via statistical physics.
\newblock {\em IEEE Transactions on Information Theory}, 51(6):1983--2010, June
  2005.

\bibitem[GZ17a]{gamarnikzadik}
David Gamarnik and Ilias Zadik.
\newblock High dimensional linear regression with binary coefficients: Mean
  squared error and a phase transition.
\newblock {\em Conference on Learning Theory (COLT)}, 2017.

\bibitem[GZ17b]{gamarnikzadik2}
David Gamarnik and Ilias Zadik.
\newblock Sparse high dimensional linear regression: Algorithmic barrier and a
  local search algorithm.
\newblock {\em arXiv Preprint}, 2017.

\bibitem[GZ18]{gamarnikzadik3}
David Gamarnik and Ilias Zadik.
\newblock High dimensional linear regression using lattice basis reduction.
\newblock In {\em Advances in Neural Information Processing Systems {(NIPS)}},
  2018.

\bibitem[JB12]{joseph:2012}
Antony Joseph and Andrew~R. Barron.
\newblock Least sqaures superposition codes of moderate dictionarysize are
  reliable at rates up to capacity.
\newblock {\em IEEE Transactions on Information Theory}, 2012.

\bibitem[JB14]{joseph:2014}
A.~Joseph and A.~R. Barron.
\newblock Fast sparse superposition codes have near exponential error
  probability for r < c,.
\newblock {\em IEEE Trans. Inf. Theory, vol. 60, no. 2, pp. 919--942}, 2014.

\bibitem[JKR11]{jin2011limits}
Yuzhe Jin, Young-Han Kim, and Bhaskar~D Rao.
\newblock Limits on support recovery of sparse signals via multiple-access
  communication techniques.
\newblock {\em IEEE Transactions on Information Theory}, 57(12):7877--7892,
  2011.

\bibitem[KKM{\etalchar{+}}17]{kudekar2017reed}
Shrinivas Kudekar, Santhosh Kumar, Marco Mondelli, Henry~D Pfister, Eren
  {\c{S}}a{\c{s}}o{\v g}lu, and R{\"u}diger~L Urbanke.
\newblock Reed--muller codes achieve capacity on erasure channels.
\newblock {\em IEEE Transactions on Information Theory}, 63(7):4298--4316,
  2017.

\bibitem[Kum10]{Kumar10}
Nirman Kumar.
\newblock Bounding the volume of hamming balls.
\newblock
  \url{https://cstheory.wordpress.com/2010/08/13/bounding-the-volume-of-hamming-balls/},
  Aug. 2010.

\bibitem[LM00]{laurent2000adaptive}
Beatrice Laurent and Pascal Massart.
\newblock Adaptive estimation of a quadratic functional by model selection.
\newblock {\em Annals of Statistics}, pages 1302--1338, 2000.

\bibitem[MMU08]{measson2008maxwell}
Cyril M{\'e}asson, Andrea Montanari, and R{\"u}diger Urbanke.
\newblock Maxwell construction: The hidden bridge between iterative and maximum
  a posteriori decoding.
\newblock {\em IEEE Transactions on Information Theory}, 54(12):5277--5307,
  2008.

\bibitem[MNS15]{Mossel12}
Elchanan Mossel, Joe Neeman, and Allan Sly.
\newblock Reconstruction and estimation in the planted partition model.
\newblock {\em Probability Theory and Related Fields}, 162(3-4):431--461, 2015.

\bibitem[NT18]{ndaoud2018optimal}
Mohamed Ndaoud and Alexandre~B Tsybakov.
\newblock Optimal variable selection and adaptive noisy compressed sensing.
\newblock {\em arXiv preprint arXiv:1809.03145}, 2018.

\bibitem[PW15]{PW-it}
Yury Polyanskiy and Yihong Wu.
\newblock {Lecture Notes on Information Theory}.
\newblock Feb 2015.
\newblock \url{http://people.lids.mit.edu/yp/homepage/data/itlectures_v4.pdf}.

\bibitem[PWB16]{PerryWeinBandeira16}
Amelia Perry, Alexander~S. Wein, and Afonso~S. Bandeira.
\newblock Statistical limits of spiked tensor models.
\newblock arXiv:1612.07728, Dec. 2016.

\bibitem[Rad11]{Rad2011}
K.~Rahnama Rad.
\newblock Nearly sharp sufficient conditions on exact sparsity pattern
  recovery.
\newblock {\em IEEE Transactions on Information Theory}, 57(7):4672--4679, July
  2011.

\bibitem[Ree17]{reeves:2017c}
Galen Reeves.
\newblock Conditional central limit theorems for {G}aussian projections.
\newblock In {\em Proceedings of the IEEE International Symposium on
  Information Theory (ISIT)}, pages 3055--3059, Aachen, Germany, June 2017.

\bibitem[RG12]{reeves:2012}
Galen Reeves and Michael Gastpar.
\newblock The sampling rate-distortion tradeoff for sparsity pattern recovery
  in compressed sensing.
\newblock {\em IEEE Transactions on Information Theory}, 58(5):3065--3092, May
  2012.

\bibitem[RG13]{reeves:2013b}
Galen Reeves and Michael Gastpar.
\newblock Approximate sparsity pattern recovery: {I}nformation-theoretic lower
  bounds.
\newblock {\em IEEE Transactions on Information Theory}, 59(6):3451--3465, June
  2013.

\bibitem[RGV17]{Rush:2017}
C.~Rush, A.~Greig, and R.~Venkataramanan.
\newblock Capacity-achieving sparse superposition codes via approximate message
  passing decoding.
\newblock {\em IEEE Trans. Inf. Theory, vol. 63, pp. 1476--1500}, 2017.

\bibitem[RP16]{reeves:2016a}
Galen Reeves and Henry~D. Pfister.
\newblock The replica-symmetric prediction for compressed sensing with
  {G}aussian matrices is exact.
\newblock In {\em Proceedings of the IEEE International Symposium on
  Information Theory (ISIT)}, pages 665 -- 669, Barcelona, Spain, July 2016.
\newblock arXiv. {A}vailable: \url{https://arxiv.org/abs/1607.02524}.

\bibitem[SC17]{scarlett:2017}
Jonathan Scarlett and Volkan Cevher.
\newblock Limits on support recovery with probabilistic models: {A}n
  information-theoretic framework.
\newblock {\em IEEE Transactions on Information Theory}, 63(1):593--620,
  September 2017.

\bibitem[Tan02]{tanaka:2002}
T.~Tanaka.
\newblock A statistical-mechanics approach to large-system analysis of {CDMA}
  multiuser detectors.
\newblock {\em IEEE Transactions on Information Theory}, 48(11):2888--2910,
  November 2002.

\bibitem[Wai09a]{wainwright:2009}
Martin~J. Wainwright.
\newblock Information-theoretic limits on sparsity recovery in the
  high-dimensional and noisy setting.
\newblock {\em IEEE Transactions on Information Theory}, 55(12):5728--5741,
  December 2009.

\bibitem[Wai09b]{wainwright2009sharp}
Martin~J Wainwright.
\newblock Sharp thresholds for high-dimensional and noisy sparsity recovery
  using constrained quadratic programming (lasso).
\newblock {\em IEEE transactions on information theory}, 55(5):2183--2202,
  2009.

\bibitem[WWR10]{wang2010information}
Wei Wang, Martin~J Wainwright, and Kannan Ramchandran.
\newblock Information-theoretic limits on sparse signal recovery: Dense versus
  sparse measurement matrices.
\newblock {\em Information Theory, IEEE Transactions on}, 56(6):2967--2979,
  2010.

\bibitem[WX18]{wu2018statistical}
Yihong Wu and Jiaming Xu.
\newblock Statistical problems with planted structures: Information-theoretical
  and computational limits.
\newblock {\em arXiv preprint arXiv:1806.00118}, 2018.

\end{thebibliography}

\end{document}